\def\R{\mathbb{R}}
\def\S{\mathbb{S}}
\newtheorem{remark}{Remark}
\newtheorem{prop}{Proposition}
\begin{document}

\title{Diffeomorphic registration of discrete geometric distributions}


\author{Hsi-Wei Hsieh}
\address{Center of Imaging Sciences, Johns Hopkins University, Baltimore, USA}
\email{hhsieh@cis.jhu.edu}

\author{Nicolas Charon}
\address{Center of Imaging Sciences, Johns Hopkins University, Baltimore, USA}
\email{charon@cis.jhu.edu}


\begin{abstract}
    This paper proposes a new framework and algorithms to address the problem of diffeomorphic registration on a general class of geometric objects that can be described as discrete distributions of local direction vectors. It builds on both the large deformation diffeomorphic metric mapping (LDDMM) model and the concept of oriented varifolds introduced in previous works like \cite{Charon2017}. Unlike previous approaches in which varifold representations are only used as surrogates to define and evaluate fidelity terms, the specificity of this paper is to derive direct deformation models and corresponding matching algorithms for discrete varifolds. We show that it gives on the one hand an alternative numerical setting for curve and surface matching but that it can also handle efficiently more general shape structures, including multi-directional objects or multi-modal images represented as distributions of unit gradient vectors.
\end{abstract}

\maketitle

\section{Introduction}
\label{sec:introduction}
\subsection*{Background} Statistical shape analysis is now regarded across the board as an important area of applied mathematics as it has been and still is the source of quantities of theoretical works as well as applications to domains like computational anatomy, computer vision or robotics. Broadly speaking, one of its central aim is to provide quantitative/computational tools to analyze the variability of geometric structures in order to perform different tasks such as shape comparison or classification. 

There are several specific difficulties in tackling such problems in the case of datasets involving geometric shapes. A fundamental one is the issue of defining and computing metrics on shape spaces. A now quite standard approach which was pioneered by Grenander in \cite{Grenander1993} is to compare shapes through distances based on deformation groups equipped with right-invariant metrics together with a left group action defined on the set of shapes. In this framework, the induced distance is typically obtained by solving a registration problem i.e by finding an optimal deformation mapping one object on the other one. It is thus ultimately determined by the deformation group and its metric for which many models have been proposed. In this paper, we will focus on the Large Deformation Diffeomorphic Metric Mapping (LDDMM) of \cite{Beg2005} in which diffeomorphic transformations are generated as flows of time-dependent velocity fields.

Despite the versatility of such models, one of the other common difficulty in shape analysis is the multiple forms or modalities that shapes may take. Looking only at the applications in the field of computational anatomy, if early works have mostly considered shapes given by medical images \cite{Rueckert1999,Beg2005} or manually extracted landmarks \cite{Joshi2000}, the variety of geometric structures at hand has considerably increased since then, whether shapes are images acquired through multiple modalities (MRI, CT...) \cite{Avants2008}, vector or tensor fields as in Diffusion Tensor Images \cite{Cao2005}, fields of orientation distribution functions \cite{Du2012} or delineated objects like point clouds, curves \cite{Glaunes2006}, surfaces \cite{Glaunes2008}, fiber bundles \cite{Durrleman4}...   

The intent of this paper is to make a modest step toward one possible generalized setting that could encompass a rich class of shapes including many of the previous cases within a common representation and eventually lead to a common LDDMM matching framework. Our starting point is the set of works on curve and surface registration based on geometric distributions like measures, currents or varifolds \cite{Glaunes2004,Glaunes2008,Charon2}. In the recent article \cite{Charon2017} for instance, an oriented curve/surface is interpreted as a directional distribution (known as oriented varifold) of its oriented tangent/normal vectors, which results in simple fidelity terms used in combination with LDDMM to formulate and solve inexact matching problems. Yet all those works so far have restricted the role of distributions' representations to intermediates for the computation of guiding terms in registration algorithms; the underlying deformation model and registration problem remains defined over point sets with meshes.

The stance we take here is to instead introduce group actions and formulate the diffeomorphic matching problem directly in spaces of geometric distributions. In this particular work, we will restrict the analysis to objects in 2D and 3D and focus on the simpler subspace of discrete distributions, i.e that write as finite sums of Dirac varifold masses: Figure \ref{fig:discr_shape_var} gives a few examples of objects naturally represented in this form. We shall consider different models of group actions and derive the corresponding optimal control problems, optimality conditions (Section \ref{sec:optimal_control}) and registration algorithms (Section \ref{sec:matching_algo}). This provides, on the one hand, an alternative (and theoretically equivalent) numerical framework to \cite{Charon2017} for curve and surface matching using currents, oriented or unoriented varifolds. But the main contribution of our proposed model is that it extends LDDMM registration to the more general class of objects representable by discrete varifolds. In Section \ref{sec:results}, we will show several examples of synthetic data besides curves or surfaces that can be treated as such, including cases like multi-directional objects or contrast-invariant images.          

\subsection*{Related works.} A few past works share some close connections with the present paper. For instance, \cite{Cao2005} develops an approach for registration of vector fields also within the LDDMM setting. The discrete distributions we consider here are however distinct from vector fields as they should rather be interpreted as unlabelled particles at some locations in space with orientation vectors attached (and with possibly varying number of orientation vectors at a single position) as opposed to a field of vectors defined on a fixed grid. In particular, our approach will be naturally framed in the Lagrangian setting as opposed to the Eulerian formulation of \cite{Cao2005}. The geodesic equations for the pushforward group action that are derived in Section \ref{sec:optimal_control} can be also related to the framework of \cite{Sommer2013} where deformations between images are estimated by matching higher-order information like the Jacobian of the diffeomorphism at given points using higher-order similarity measures with a specific form. These are defined through labelled sets of control points though and need to be first extracted from the images, which is again different and arguably less flexible than the method we introduce here.

\section{Shapes and discrete varifolds}
\label{sec:discrete_varifolds}
The idea of representing shapes as distributions goes back to the many works within the field of Geometric measure theory. Those concepts have later been of great interest in the construction of simple and numerically tractable metrics between curves or surfaces for registration problems: the works of \cite{Glaunes2006,Glaunes2008,Durrleman4,Charon2} are a few examples. The framework of oriented varifolds recently exploited in \cite{Charon2017} was shown to encompass all those notions into a general representation and provide a wide range of metrics on the spaces of embedded curves or surfaces. We give a brief summary of the latter work below.  

In the rest of the paper, we will call an oriented varifold or, to abbreviate, a \textit{varifold} in $\R^n$ (we shall here consider the cases $n=2$ or $n=3$) a distribution on the product $\R^n \times \S^{n-1}$. In other words, a varifold $\mu$ is by definition a linear form over a certain space $W$ of smooth functions on $\R^n \times \S^{n-1}$, which evaluation we shall write as $\mu(\omega)$ for any test function $\omega \in W$. In all what follows, we shall restrict our focus to 'discrete' shapes and varifolds, leaving aside the analysis of the corresponding continuous models. By discrete varifold, we mean specifically that $\mu$ writes as a finite combination of Dirac masses $\mu=\sum_{i=1}^{P} r_i \delta_{(x_i,d_i)}$ with $r_i>0$, $(x_i,d_i) \in \R^n \times \S^{n-1}$ for all $i$, in which case $\mu(\omega) = \sum_{i=1}^{P} r_i \omega(x_i,d_i)$ for all $\omega$. Such a $\mu$ can be thought as a set of unit direction vectors $d_i$ located at positions $x_i$ with weights (or masses) equal to the $r_i$'s. We assume by convention that the $(x_i,d_i)$ are distinct, but not necessarily that all the positions $x_i$ are: in other words, in our model, there can be more than a single direction vector attached to each position. In the rest of the paper, we will denote by $\mathcal{D}$ the set of all discrete varifolds. Note that in this representation and unlike the cases of landmarks and vector fields, the particles are unlabelled i.e the varifold $\mu$ is invariant to any permutation of the $(x_i,d_i)$. One particular subset of interest that we shall denote $\mathring{\mathcal{D}} \subset \mathcal{D}$ is the space of discrete varifolds with distinct positions $x_i$ (or equivalently, the discrete varifolds that carry a single direction vector per point position).     

\begin{figure}
    \centering
    \begin{tabular}{ccc}
    \includegraphics[width=4cm]{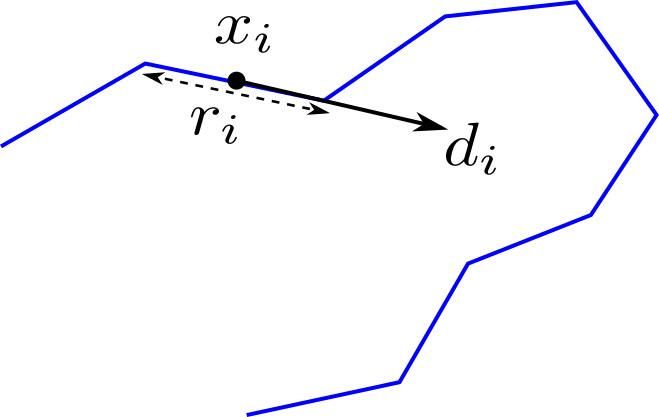} & & \includegraphics[width=4.5cm]{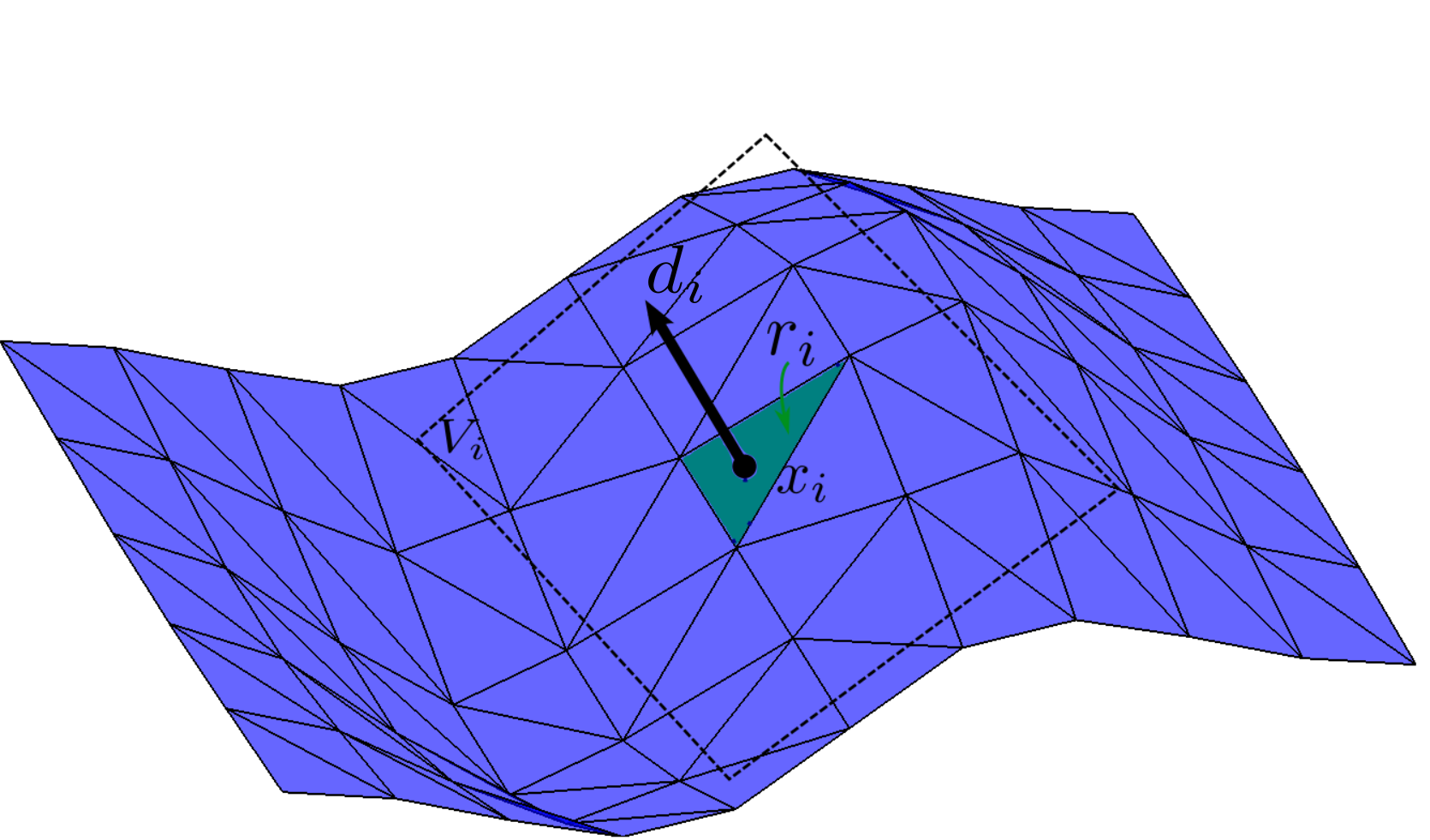} \\
    (a) & & (b) \\
    \includegraphics[width=4cm,height=4cm]{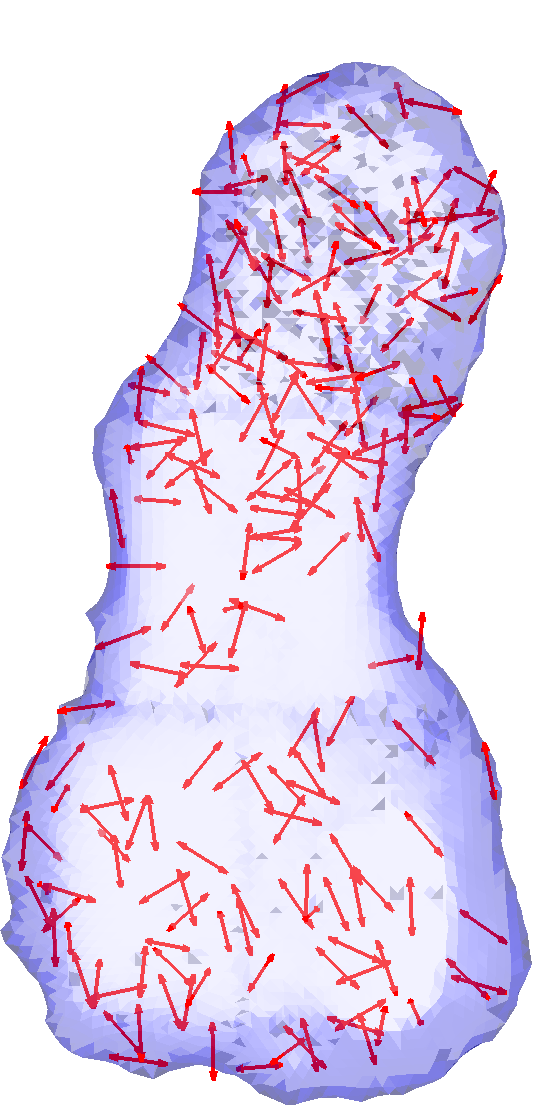} & & \includegraphics[width=4.5cm]{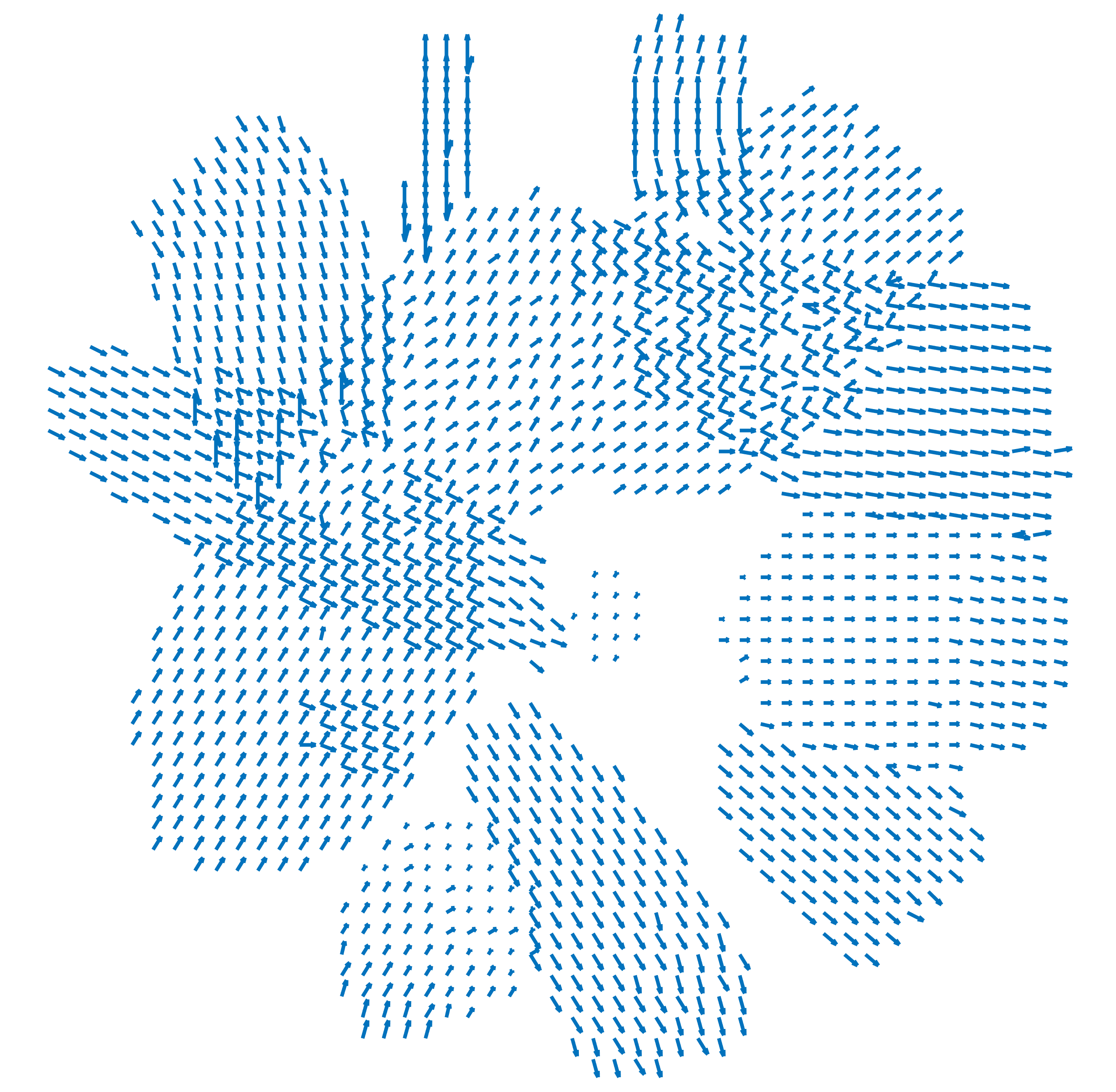} \\
    (c) & & (d)
    \end{tabular}
    \caption{Some examples of data representable by discrete varifolds: (a) Piecewise linear curve. (b) Triangulated surface. (c) A set of cells' mitosis directions measured inside a mouse embryonic heart membrane (c.f \cite{Ragni2017}). (d) Peak diffusion directions extracted from a slice of High Angular Resolution Diffusion Imaging phantom data, note the presence of multiple directions at certain locations corresponding to fiber crossing.}
    \label{fig:discr_shape_var}
\end{figure}

The relationship between shapes and varifolds relies on the fact that discrete shapes, namely curve or surface meshes, can be naturally approximated by varifolds of the previous form. As explained with more details in the aforementioned references, this is done by associating to any cell of the discrete mesh (i.e a segment for curves or a triangular face for surfaces) the weighted Dirac $r_i \delta_{(x_i,d_i)}$ as illustrated in Figure \ref{fig:discr_shape_var}. In that expression, $x_i$ is the coordinates of the center of the cell, $r_i$ its total length or area and $d_i$ the direction of the tangent space represented by the unit tangent or normal orientation vector $d_i$. It results in a mapping $S \mapsto \mu_{S}$ that associates to any discrete shape $S$ the discrete varifold $\mu_S = \sum_{i=1}^{F} r_i \delta_{(x_i,d_i)} \in \mathring{\mathcal{D}}$ obtained as the sum over all faces $i=1,\ldots,F$ of the corresponding Diracs. 

The main interest of such a representation is that it gives a convenient setting for the definition of shape similarities that are easy to compute without the need for pointwise correspondences between points. Assuming, which is quite natural in our context, that $W$ is a Hilbert space and that all Diracs $\delta_{(x,d)}$ for $(x,d) \in \R^n \times \S^{n-1}$ belong to the dual, $W$ must be then chosen as a Reproducing Kernel Hilbert Space (RKHS) associated to a smooth positive definite kernel on $\R^n \times \S^{n-1}$. In particular, we will follow the construction proposed in \cite{Charon2017} and consider separable kernels of the form $k(x,d,x',d')=\rho(|x-x'|^2) \gamma(\langle d , d' \rangle)$ where $\rho$ and $\gamma$ define positive definite kernel functions respectively on the positions between particles and the angles between their orientation vectors. The reproducing kernel metric on $W$ then gives a dual metric on varifolds that explicitly writes, for $\mu=\sum_{i=1}^{P} r_i \delta_{(x_i,d_i)}$:
\begin{equation}
\label{eq:metric_var}
 \|\mu\|_{W^*}^2 = \sum_{i,j} r_i r_j \rho(|x_i-x_j|^2) \gamma(\langle d_i, d_j \rangle)
\end{equation}

Such metrics on $W^*$ are determined by the choice of the positive definite functions $\rho$ and $\gamma$ and provide a global measure of proximity between two discrete varifolds. One important advantage for applications to e.g registration is that the computation of a distance $\|\mu-\mu'\|_{W^*}^2$ between two distributions does not require finding correspondences between their masses but instead reduces numerically to a quadratic number of kernel evaluations. The gradients of the metric with respect to the $x_i$'s and $d_i$'s is also very easy to obtain by direct differentiation of \eqref{eq:metric_var}. Finally, we note that the expression in \eqref{eq:metric_var} is also invariant to the action of the group of rigid motion. Namely for any rotation matrix $R$, translation vector $h$ and the group action $(R,h)\cdot \mu \doteq \sum_{i=1}^{P} r_i \delta_{(Rx_i+h,Rd_i)}$, one has $\|(R,h)\cdot \mu \|_{W^*} = \|\mu\|_{W^*}$. 

In all generality however, \eqref{eq:metric_var} may only yield a pseudo-metric on the set of discrete varifolds $\mathcal{D}$ since the inclusion mapping $\mathcal{D} \rightarrow W^*$ is not necessarily injective. A necessary and sufficient condition is: 
\begin{prop}
\label{prop:var_metric1}
 The metric $\|\cdot\|_{W^*}$ on $W^*$ induces a metric on $\mathcal{D}$ if and only if $k$ is a strictly positive definite kernel on $\R^n \times \S^{n-1}$. 
\end{prop}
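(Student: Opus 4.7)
The plan is to reduce the statement to the natural characterization of injectivity of the inclusion $\mathcal{D} \hookrightarrow W^*$. Since $\|\cdot\|_{W^*}$ is a genuine norm on $W^*$, the pseudo-metric $\|\mu-\mu'\|_{W^*}$ on $\mathcal{D}$ is a metric if and only if $\mu = \mu'$ in $\mathcal{D}$ whenever $\mu$ and $\mu'$ agree as linear forms on $W$. The whole proof will revolve around a single calculation: for any formal linear combination $\nu = \sum_{k} c_k \delta_{(z_k,f_k)}$ with distinct $(z_k,f_k)$, one has $\|\nu\|_{W^*}^2 = \sum_{k,l} c_k c_l\, k(z_k,f_k,z_l,f_l)$ by the reproducing property, exactly as in \eqref{eq:metric_var} but allowing signed coefficients.

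For the sufficient direction, assume $k$ is strictly positive definite and pick $\mu,\mu' \in \mathcal{D}$ with $\|\mu-\mu'\|_{W^*}=0$. I would first group the Diracs of $\mu$ and $\mu'$ over the common set of supporting points to write $\mu - \mu' = \sum_{k=1}^{m} c_k \delta_{(z_k,f_k)}$ with the $(z_k,f_k)$ pairwise distinct and $c_k \in \mathbb{R}$ (some possibly zero if a point appears identically in both varifolds). The quadratic form identity above then forces $\sum_{k,l} c_k c_l\, k(z_k,f_k,z_l,f_l) = 0$, and strict positive definiteness gives $c_k = 0$ for all $k$, which precisely says that every $(x_i,d_i)$ in $\mu$ matches an $(x'_j,d'_j)$ in $\mu'$ with equal weight, i.e.\ $\mu=\mu'$ as elements of $\mathcal{D}$.

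For the necessary direction I would argue by contrapositive. If $k$ is not strictly positive definite, there exist distinct points $(z_1,f_1),\ldots,(z_m,f_m) \in \R^n \times \S^{n-1}$ and coefficients $(c_1,\ldots,c_m) \ne 0$ such that $\sum_{k,l} c_k c_l\, k(z_k,f_k,z_l,f_l) = 0$. Split the indices into $I^+ = \{k : c_k > 0\}$ and $I^- = \{k : c_k < 0\}$ and set
\begin{equation*}
\mu = \sum_{k \in I^+} c_k\, \delta_{(z_k,f_k)}, \qquad \mu' = \sum_{k \in I^-} (-c_k)\, \delta_{(z_k,f_k)}.
\end{equation*}
Then $\mu,\mu' \in \mathcal{D}$, by construction $\mu - \mu' = \sum_k c_k \delta_{(z_k,f_k)}$ and thus $\|\mu-\mu'\|_{W^*}=0$, while $\mu \neq \mu'$ because their supports in $\R^n \times \S^{n-1}$ are disjoint and at least one of them is non-empty.

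The one delicate point is the degenerate situation where $I^+ = \emptyset$ or $I^- = \emptyset$, i.e.\ all non-zero $c_k$ share a common sign, since then one of $\mu$ or $\mu'$ above is the empty sum, which the paper does not explicitly include in $\mathcal{D}$. I would handle this by picking an auxiliary point $(z^*,f^*)$ distinct from all the $(z_k,f_k)$ and replacing the pair by $\tilde\mu = \delta_{(z^*,f^*)} + \sum_{k} \max(c_k,0)\, \delta_{(z_k,f_k)}$ and $\tilde\mu' = \delta_{(z^*,f^*)} + \sum_{k} \max(-c_k,0)\, \delta_{(z_k,f_k)}$, which both lie in $\mathcal{D}$, still differ, and still satisfy $\tilde\mu - \tilde\mu' = \sum_k c_k \delta_{(z_k,f_k)}$ so that $\|\tilde\mu - \tilde\mu'\|_{W^*}=0$. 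This bookkeeping is the only step that requires care; everything else follows directly from the expansion \eqref{eq:metric_var} extended to signed coefficients.
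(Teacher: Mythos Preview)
Your proposal is correct and is precisely the detailed expansion of what the paper treats as a one-liner: the paper's entire proof is the sentence ``The proof follows immediately from the definition of strictly positive definite kernel.'' Your argument unpacks exactly this, and you are in fact more careful than the paper in handling the edge case where all the $c_k$ share a sign (so that one side of the split would be the empty varifold).
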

The proof follows immediately from the definition of strictly positive definite kernel. This condition holds in particular if both kernels defined by $\rho$ and $\gamma$ are strictly positive definite. In the case of $\mathring{\mathcal{D}}$, one can provide different sufficient conditions which are often more convenient to satisfy in practice. These involve a density property on kernels called $C_0$-universality, cf \cite{Carmeli2010}. A kernel on $\R^n$ is said to be $C_0$-universal if the associated RKHS is dense in $C_0(\R^n,\R)$. Then one has the following 
\begin{prop}
\label{prop:var_metric2}
 If the kernel defined by $\rho$ is $C_0$-universal, $\gamma(1)>0$ and $\gamma(u) < \gamma(1)$ for all $u \in [-1,1)$, then $\|\cdot\|_{W^*}$ induces a metric on $\mathring{\mathcal{D}}$. 
\end{prop}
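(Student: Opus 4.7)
The plan is to prove that $\|\mu - \mu'\|_{W^*} = 0$ forces $\mu = \mu'$ whenever $\mu, \mu' \in \mathring{\mathcal{D}}$. Write $\mu = \sum_{i=1}^P r_i \delta_{(x_i, d_i)}$ and $\mu' = \sum_{j=1}^{P'} r'_j \delta_{(x'_j, d'_j)}$, with the $x_i$ (resp.\ $x'_j$) pairwise distinct by the $\mathring{\mathcal{D}}$ assumption. The key structural observation is that since $k$ splits as a product of kernels on each factor, $W$ is isometric to the Hilbert tensor product $H_\rho \otimes H_\gamma$ of the factor RKHS; in particular, elementary tensors $\omega = f \otimes g$ defined by $\omega(x,d) = f(x)g(d)$ with $f \in H_\rho$ and $g \in H_\gamma$ are legitimate test functions in $W$.

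First, I would enumerate the finite set of distinct positions $Y = \{y_1, \ldots, y_N\} = \{x_i\} \cup \{x'_j\}$ appearing in either support. Fix one $y_\ell \in Y$. Combining the $C_0$-universality of $\rho$ with Urysohn's lemma, there exists a sequence $(f_n) \subset H_\rho$ converging uniformly on the finite set $Y$ to a function $\tilde f_\ell \in C_0(\R^n, \R)$ satisfying $\tilde f_\ell(y_\ell) = 1$ and $\tilde f_\ell(y_k) = 0$ for $k \neq \ell$. Applying the identity $(\mu - \mu')(f_n \otimes g) = 0$ for every $g \in H_\gamma$ and passing to the limit $n \to \infty$ leaves only the terms located at $y_\ell$:
\begin{equation*}
\alpha_\ell\, g(e_\ell) - \alpha'_\ell\, g(e'_\ell) = 0, \qquad \forall g \in H_\gamma,
\end{equation*}
where $(\alpha_\ell, e_\ell)$ and $(\alpha'_\ell, e'_\ell)$ are the mass/direction pairs of $\mu$ and $\mu'$ at $y_\ell$, with the convention $\alpha_\ell = 0$ if $y_\ell \notin \mathrm{supp}(\mu)$ (and likewise for $\mu'$).

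This identity says exactly that $\alpha_\ell\, \delta_{e_\ell} - \alpha'_\ell\, \delta_{e'_\ell}$ vanishes as an element of $H_\gamma^*$. Expanding its squared norm via the reproducing kernel $\gamma$ yields
\begin{equation*}
0 = \alpha_\ell^2\, \gamma(1) - 2\alpha_\ell \alpha'_\ell\, \gamma(\langle e_\ell, e'_\ell\rangle) + (\alpha'_\ell)^2 \gamma(1).
\end{equation*}
Since $\alpha_\ell, \alpha'_\ell \geq 0$ and $\gamma(u) \leq \gamma(1)$ on $[-1,1]$ (by the strict-inequality hypothesis together with the trivial $\gamma(1)\le\gamma(1)$), bounding the cross term gives $0 \geq (\alpha_\ell - \alpha'_\ell)^2\gamma(1)$; combined with $\gamma(1) > 0$, this forces $\alpha_\ell = \alpha'_\ell$. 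When this common value is nonzero, equality in the bound further imposes $\gamma(\langle e_\ell, e'_\ell\rangle) = \gamma(1)$, which by the strict-inequality hypothesis yields $e_\ell = e'_\ell$. Carrying this at every $y_\ell \in Y$ concludes $\mu = \mu'$.

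The main obstacle I expect is the pair of underlying technical facts that the argument rests on: first, the identification $W \cong H_\rho \otimes H_\gamma$ for a product kernel, which must be invoked so that evaluating $\mu - \mu'$ on elementary tensors is meaningful; and second, upgrading $C_0$-universality into the concrete separating sequence via Urysohn. The restriction to $\mathring{\mathcal{D}}$ rather than general $\mathcal{D}$ plays an essential role: because each varifold carries at most one direction per position, the localization reduces the global equation to a \emph{two}-Dirac problem on $\S^{n-1}$, so no universality hypothesis on $\gamma$ is needed and the peaked-at-$1$ property of $\gamma$ alone suffices to conclude.
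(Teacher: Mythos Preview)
Your proof is correct and follows essentially the same strategy as the paper: use the tensor structure $W \cong H_\rho \otimes H_\gamma$ together with $C_0$-universality of $\rho$ to localize at a single position, then exploit the peaked-at-$1$ property of $\gamma$ to force the mass/direction pair to agree. The only difference is in the endgame after localization: the paper tests against the single function $g(\cdot)=\gamma(\langle d_{i_0},\cdot\rangle)$ and, assuming without loss of generality $r_{i_0}\ge r'_{i_0}$, derives a contradiction from the scalar identity $r_{i_0}\gamma(1)=r'_{i_0}\gamma(\langle d_{i_0},d'_{i_0}\rangle)$ via a short case split, whereas you test against all $g\in H_\gamma$, expand the squared $H_\gamma^*$-norm, and conclude by a symmetric quadratic bound---slightly cleaner in that it avoids the WLOG and the case analysis.
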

\begin{proof}
Let $W_{pos}$ and $W_{or}$ be the RKHS associated to $\rho$ and $\gamma$. By contradiction, suppose that $\mu, \mu' \in \mathring{\mathcal{D}}$ with $\|\mu-\mu'\|_{W^*}=0$ and $\mu \neq \mu'$ in $\mathring{\mathcal{D}}$. We can write $\mu,\mu'$ in the following form:
\begin{align*}
    \mu = \sum_{i=1}^N r_i \delta_{(z_i,d_i)}, \ \ \mu' = \sum_{i=1}^N r_i' \delta_{(z_i,d_i')},
\end{align*}
where $\{z_i\}$, with $z_i$ all distinct, is the reunion of point positions from both distributions and $\max\limits_{1 \leq i \leq N} \{r_i,r_i' \}>0$, $\min\limits_{1 \leq i \leq N} \{r_i,r_i' \} \geq 0$. Since $\mu$ and $\mu'$ are distinct in $\mathring{\mathcal{D}}$, there is some $i_0$ such that $(d_{i_0},r_{i_0}) \neq (d_{i_0}',r_{i_0}')$. Without loss of generality, we may assume $r_{i_0} \geq r_{i_0}'$. Let $g(\cdot) = \gamma(\langle d_{i_0}, \cdot \rangle) \in W_{or}$ and choose $f \in C_0(\mathbb{R}^n,\mathbb{R})$ satisfying $f(z_{i_0}) =1$ and $f(z_i)=0$ for all $i \geq 2$. Since the kernel defined by $\rho$ is $C_0$-universal, there exists $\{f_n\} \subset W_{pos}$ such that $f_n \rightarrow f$ uniformly. As $f_n \otimes g \in W$, we have that
\begin{equation*}
    0 = (\mu-\mu'|f_n \otimes g) = \sum_{i=1}^N f_n(z_i) (r_i g(d_i) -r_i' g(d_i'))
\end{equation*}
Taking the limit $n \rightarrow +\infty$, this gives:
\begin{equation}
\label{eq:proof_prop2}
    0 = f(z_{i_0}) (r_{i_0} g(d_{i_0}) -r_{i_0}' g(d_{i_0}')) = \underbrace{r_{i_0} \gamma(1) - r_{i_0}' \gamma(\langle d_{i_0}, d_{i_0}'\rangle)}_{A} .
\end{equation}
Since $(d_{i_0},r_{i_0}) \neq (d_{i_0}',r_{i_0}'), \ r_{i_0} \geq r_{i_0}' \textrm{ and } r_{i_0}>0$, we have either $d_{i_0}\neq d_{i_0}'$ and then $A \geq r_{i_0}(\gamma(1) -\gamma(\langle d_{i_0}, d_{i_0}'\rangle))>0$ or $d_{i_0} = d_{i_0}'$ and $r_{i_0}> r_{i_0}'$ in which case $A = (r_{i_0} - r_{i_0}') \gamma(1)>0$. In either case the right hand side of \eqref{eq:proof_prop2} is positive which is a contradiction.
\end{proof}
Note that the $C_0$-universality assumption still implies that the kernel defined by $\rho$ is strictly positive definite. However, the assumptions on $\gamma$ are typically less restrictive than in Proposition \ref{prop:var_metric1}. 

A last subclass of varifold metrics that shall be of interest in this paper is the case of orientation-invariant kernels which amounts in choosing an even function $\gamma$ in the kernel definition. This, indeed, leads to a space $W^*$ and metric $\|\cdot\|_{W^*}$ for which Diracs $\delta_{(x,d)}$ and $\delta_{(x,-d)}$ are equal in $W^*$ for any $(x,d) \in \R^n \times \S^{n-1}$. In other words, elements of $\mathcal{D}$ can be equivalently viewed as \textit{unoriented} varifolds, i.e distributions on the product of $\R^n$ and the projective space of $\R^n$, similarly to the framework of \cite{Charon2}. In that particular situation, one obtains an induced distance under the conditions stated in the following proposition which proof is a straightforward adaptation of the one of Proposition \ref{prop:var_metric2}. 
\begin{prop}
\label{prop:var_metric3}
 If the kernel defined by $\rho$ is $C_0$-universal, $\gamma$ is an even function with $\gamma(1)>0$ and $\gamma(u) < \gamma(1)$ for all $u \in (-1,1)$, then $\|\cdot\|_{W^*}$ induces a metric on the space $\mathring{\mathcal{D}}$ modulo the orientation. 
\end{prop}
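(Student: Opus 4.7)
The proof plan is to imitate the argument of Proposition \ref{prop:var_metric2} almost verbatim, the only real change being in the final case analysis, where the evenness of $\gamma$ together with the strict inequality on $(-1,1)$ is exactly what is needed to rule out the new possibility $d_{i_0}' = -d_{i_0}$.

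First I would argue by contradiction: suppose $\mu, \mu' \in \mathring{\mathcal{D}}$ satisfy $\|\mu - \mu'\|_{W^*} = 0$ but are not equal modulo orientation. Just as before, by merging their point supports, I write
\begin{equation*}
\mu = \sum_{i=1}^{N} r_i \delta_{(z_i,d_i)}, \quad \mu' = \sum_{i=1}^{N} r_i' \delta_{(z_i,d_i')},
\end{equation*}
with distinct $z_i$ and $\max\{r_i,r_i'\} > 0$, $\min\{r_i,r_i'\} \geq 0$. The assumption that $\mu$ and $\mu'$ differ modulo orientation means that for some index $i_0$ we have $(d_{i_0}, r_{i_0}) \neq (\pm d_{i_0}', r_{i_0}')$, i.e.\ either $d_{i_0} \notin \{d_{i_0}', -d_{i_0}'\}$, or $d_{i_0} = \pm d_{i_0}'$ but $r_{i_0} \neq r_{i_0}'$. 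Without loss of generality assume $r_{i_0} \geq r_{i_0}'$.

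Next, I reuse the same test function construction: pick $f \in C_0(\mathbb{R}^n,\mathbb{R})$ with $f(z_{i_0}) = 1$ and $f(z_i) = 0$ for $i \neq i_0$, approximate it uniformly by $f_n \in W_{pos}$ via $C_0$-universality, and set $g(\cdot) = \gamma(\langle d_{i_0}, \cdot\rangle) \in W_{or}$. Evaluating $(\mu - \mu' | f_n \otimes g) = 0$ and passing to the limit yields the identical equation
\begin{equation*}
0 = r_{i_0}\gamma(1) - r_{i_0}'\gamma(\langle d_{i_0}, d_{i_0}'\rangle) =: A.
\end{equation*}

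The only new piece is the case analysis, which is the main (mild) obstacle. If $d_{i_0} \notin \{d_{i_0}', -d_{i_0}'\}$, then $\langle d_{i_0}, d_{i_0}'\rangle \in (-1,1)$, so the hypothesis $\gamma(u) < \gamma(1)$ on $(-1,1)$ gives $A \geq r_{i_0}(\gamma(1) - \gamma(\langle d_{i_0}, d_{i_0}'\rangle)) > 0$. Otherwise $d_{i_0}' = \pm d_{i_0}$; here evenness of $\gamma$ gives $\gamma(\langle d_{i_0}, d_{i_0}'\rangle) = \gamma(\pm 1) = \gamma(1)$, so $A = (r_{i_0} - r_{i_0}')\gamma(1)$, and since we must have $r_{i_0} > r_{i_0}'$ in this subcase and $\gamma(1) > 0$, again $A > 0$. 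Both subcases contradict $A = 0$, which establishes the injectivity of the inclusion into $W^*$ on $\mathring{\mathcal{D}}$ modulo orientation and therefore gives the desired metric property.
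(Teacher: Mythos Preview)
Your proposal is correct and is exactly the ``straightforward adaptation'' of the proof of Proposition~\ref{prop:var_metric2} that the paper alludes to rather than writes out. The only change from that proof is the final case split, where you correctly use the evenness of $\gamma$ to handle $d_{i_0}' = -d_{i_0}$ via $\gamma(-1)=\gamma(1)$ and the strict inequality on $(-1,1)$ for the remaining case; this matches the paper's intended argument.
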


In Section \ref{sec:results} below, we will discuss more thoroughly and illustrate the effects of those kernel properties on the solutions to registration problems for different cases of discrete distributions. 


\section{Optimal diffeomorphic mapping of varifolds}
\label{sec:optimal_control}
It is essential to point out that the notion of varifold presented above contains but is also more general than curves and surfaces as it allows to model more complex geometric structures like objects carrying multiple orientation vectors at a given position. In contrast with most previous works on diffeomorphic registration that only involve varifolds as an intermediary representation to compute fidelity terms between shapes, the purpose of this paper to derive a deformation model and registration framework on the space $\mathcal{D}$ itself.  

\subsection{Group action}
\label{ssec:group_action}
A first key element is to express the way that deformations 'act' on discrete varifolds. Considering a smooth diffeomorphism $\phi \in \text{Diff}(\R^n)$, we first intend to express how $\phi$ should transport a Dirac $\delta_{(x,d)}$. There is however not a canonical way to define it as the nature of the underlying data affects the deformation model itself. An important distinction to be made is on the interpretation of direction vectors $d$, whether they correspond for instance to a unit tangent direction to a curve or a surface in which case $d$ is transported by the Jacobian of $\phi$ as $D_x \phi(d)/|D_x \phi(d)|$ or rather to a normal direction which instead requires a transport model involving the inverse of the transposed Jacobian i.e $(D_x \phi)^{-T}(d)/|(D_x \phi)^{-T}(d)|$ (see \cite{Younes} chap. 10 for more thorough discussion). To keep notations more compact, we will write $D\phi \cdot d$ for a given generic action of $D\phi$ on $\R^{n}$ on either tangent or normal vector and $\overline{D\phi \cdot d}$ for the corresponding normalized vector in $\S^{n-1}$. That being said, we will also consider two distinct models for the action:
\begin{itemize}
    \item[$\bullet$] $\phi_{*} \delta_{(x,d)} \doteq \delta_{(\phi(x),\overline{D\phi\cdot d})}$ (\textit{normalized action}): this corresponds to transporting the Dirac mass at the new position $\phi(x)$ and transforming the orientation vector as $\overline{D\phi \cdot d}$. 
    
    \item[$\bullet$] $\phi_{\#} \delta_{(x,d)} \doteq |D\phi\cdot d| \delta_{(\phi(x),\overline{D\phi\cdot d})}$ (\textit{pushforward action}): the position and orientation vector are transported as previously but with a reweighting factor equal to the norm of $D\phi\cdot d$.
\end{itemize}
It is then straightforward to extend both of these definitions by linearity to any discrete varifold in $\mathcal{D}$. In both cases, we obtain a group action of diffeomorphisms on the set of discrete varifolds. However, these actions are clearly not equivalent. The normalized action operates as a pure transport of mass and rotation of the direction vector whereas the pushforward model adds a weight change corresponding to the Jacobian of $\phi$ along the direction $d$. This is a necessary term in the situation where $\mu=\mu_{S}$ is representing a discrete oriented curve or surface. Indeed, one can check, up to discretization errors, that under the pushforward model, we have $\phi_{\#} \mu_{S} = \mu_{\phi(S)}$; in other words the action is compatible with the usual deformation of a shape. In the result section below, we will show examples of matching based on those different group action models. 

Although we will be focusing on special subgroups of diffeomorphisms in the next section, it will be insightful to study a little more closely the orbits of discrete varifolds under the normalized and pushforward actions of the full group $\text{Diff}(\R^n)$ (or similarly the equivalence classes $\mathcal{D}/\text{Diff}(\R^n)$). Let $\mu \in \mathcal{D}$ which we can write as $\mu = \sum_{i=1}^{N} \sum_{j=1}^{n_i} r_{i,j} \delta_{(x_{i},d_{i,j})}$ where the $x_i$ are here assumed to be distinct positions and for each $i=1,\ldots,N$, the $(d_{i,j})_{j=1,\ldots,n_i}$ are distinct in $\S^{n-1}$. While it is well-known that $\text{Diff}(\R^n)$ acts transitively on the set of point clouds of $N$ points in $\R^n$ (as $n\geq2$), this may no longer hold when one or several direction vectors are attached to each point position. 

In the case of the normalized action, we have $\phi_{*} \mu = \sum_{i=1}^{N} \sum_{j=1}^{n_i} r_{i,j} \delta_{(\phi(x_i),\overline{D\phi\cdot d_{i,j}})}$. We see that the orbit of $\mu$ is then given by: 
\begin{equation*}
    \text{Diff}_{*}\mu = \left \{\sum_{i=1}^{N} \sum_{j=1}^{n_i} r_{i,j} \delta_{(y_i,u_{i,j})} \ s.t \ y_i \neq y_j \ \text{for} \ i \neq j, \ \exists A_1,\ldots,A_{N} \in \text{GL}(\R^n), \ u_{i,j}= \frac{A_i d_{i,j}}{|A_i d_{i,j}|}  \right \} 
\end{equation*}
This is essentially the set of all discrete varifolds with any set of $N$ distinct positions and for each $i$, a set of $n_i$ directions obtained by a linear transformation of the $\{d_{i,j}\}_{j=1,\ldots,n_i}$ with  weights $r_{i,j}$ unchanged. In particular, this imposes some constraints on the set of 'attainable' direction vectors: clearly, if the number of direction vectors at a given position exceeds the dimension i.e $n_i \geq n$, this system of vectors cannot be mapped in general to any other system of $n_i$ vectors on the sphere by a single linear map. If we assume that the system of vectors at each position $x_i$ forms a frame, i.e that for all $i$, $n_i \leq n$ and the direction vectors $d_{i,j}$ for $j=1,\ldots,n_i$ are independent, then we see that the orbit of $\mu$ is given by the set of all discrete varifolds of the form $\sum_{i=1}^{N} \sum_{j=1}^{n_i} r_{i,j} \delta_{(y_i,u_{i,j})}$ with distinct $y_i$'s and $(u_{i,j})$ in $\S^{n-1}$ such that the $(u_{i,j})_{j=1,\ldots,n_i}$ are independent for all $i$. In the special case of $n_i=1$ for all $i$, that is $\mu \in \mathring{\mathcal{D}}$, the orbits are then entirely determined by the set of weights $r_i$ which gives the identification of $\mathring{\mathcal{D}}/\text{Diff}(\R^n)$ with ordered finite sets of positive numbers.  

With the pushforward action, we have $\phi_{\#} \mu = \sum_{i=1}^{N} \sum_{j=1}^{n_i} |D\phi\cdot d_{i,j}| r_{i,j} \delta_{(\phi(x_i),\overline{D\phi\cdot d_{i,j}})}$ and the orbit writes:
\begin{align*}
    \text{Diff}_{\#}\mu = \Big\{ \nu \in \mathcal{D} \ \ &s.t \ \ \exists (y_i) \in (\R^n)^N, \ y_i \neq y_j \ \text{for} \ i \neq j, \exists A_1,\ldots,A_{N} \in \text{GL}(\R^n),  \\ 
     &\nu=\sum_{i=1}^{N} \sum_{j=1}^{n_i} |A_i d_{i,j}| r_{i,j} \delta_{(y_i,u_{i,j})} \ \ \text{with} \ u_{i,j}= \frac{A_i d_{i,j}}{|A_i d_{i,j}|}  \Big\} 
\end{align*}
In the general situation, there is again no simple characterization of the orbit. With the additional assumptions that $n_i \leq n$ and the $(d_{i,j})_{j=1,\ldots,n_i}$ are independent vectors for each $i$, the orbit of $\mu$ is the set of all discrete varifolds of the form $\sum_{i=1}^{N} \sum_{j=1}^{n_i} s_{i,j} \delta_{(y_i,u_{i,j})}$ with any choice of distinct points $y_i$, direction vectors $(d_{i,j})$ in $\S^{n-1}$ such that the $(d_{i,j})_{j}$ are independent and weights $s_{i,j} > 0$. In particular, the action of $\text{Diff}(\R^n)$ in the pushforward model is transitive on all subsets of $\mathring{\mathcal{D}}$ with fixed $N$, which implies that the equivalence classes of $\mathring{\mathcal{D}}/\text{Diff}(\R^n)$ in that case are only determined by the number of Diracs in the discrete varifold, as we would expect.  

The previous discussion thus shows that for both models and unlike the more standard cases of landmarks or discrete vector fields, the action of diffeomorphisms on discrete varifolds is in general not transitive. It is therefore necessary to formulate the registration problems in their inexact form by introducing fidelity terms like the kernel metrics introduced in Section \ref{sec:discrete_varifolds}.     

\subsection{Optimal control problem}
\label{ssec:optimal_control}
With the definitions and notations of the previous sections, we can now introduce the mathematical formulation of the diffeomorphic registration of discrete varifolds. As mentioned in the introduction, we will rely on the LDDMM model for generating diffeomorphisms although other transformation spaces and models could be taken as well. In short, we consider a space of time dependent velocity fields $v \in L^2([0,1],V)$ such that for all $t\in [0,1]$, $v_t$ belongs to a certain RKHS $V$ of vector fields on $\R^n$. We will write $K: \ \R^n \times \R^n \rightarrow \R^n$ the vector-valued reproducing kernel of $V$. From $v$, one obtains the flow mapping $\phi^v_t$ at each time $t$ as the integral of the differential equation $\partial_t \phi^v_t = v_t \circ \phi^v_t$  with $\phi_0 = \text{Id}$. We then define our deformation group as the set of all flow maps $\phi_1^v$ for all velocity fields $v \in L^2([0,1],V)$. With the adequate assumptions on the kernel of $V$, this is a subgroup of the group of diffeomorphisms of $\R^n$ and it is naturally equipped with the metric given by $\int_{0}^{1} \|v_t\|_V^2 dt$, cf \cite{Younes} for a detailed exposition of the LDDMM framework. 

Now, let's consider two discrete varifolds $\mu_0 = \sum_{i=1}^{P} r_{i,0} \delta_{(x_{i,0},d_{i,0})}$ (template) and $\tilde{\mu} = \sum_{j=1}^{Q} \tilde{r}_{j} \delta_{(\tilde{x}_{j},\tilde{d}_{j})}$ (target). We formulate the inexact matching problem between $\mu_0$ and $\tilde{\mu}$ as follows:
\begin{equation}
\label{eq:matching_var}
\text{argmin}_{v \in L^2([0,1],V)} \left\{ E(v) = \int_{0}^{1} \|v_t\|_V^2 dt + \lambda \|\mu(1) - \tilde{\mu} \|_{W^*}^2 \right\}
\end{equation}
subject to either $\mu(t) \doteq (\phi_t^v)_{*} \mu_0$ in the normalized action scenario or $\mu(t) \doteq (\phi_t^v)_{\#} \mu_0$ for the pushforward model, and $\lambda$ being a weight parameter between the regularization and fidelity terms in the energy. This is easily interpreted as an optimal control problem in which the state variable is the transported varifold $\mu(t)$, the control is the velocity field $v$ and the cost functional is the sum of the standard LDDMM regularization term on the deformation and a discrepancy term between $\mu(1)$ and the target given by a varifold kernel metric as in \eqref{eq:metric_var}. Those optimal control problems are well-posed in the following sense:
\begin{prop}
If $V$ is continuously embedded in the space $C^2_0(\R^n,\R^n)$, or equivalently if $K$ is of class $C^2$ with all derivatives up to order 2 vanishing at infinity, then there exists a global minimum to the problem \eqref{eq:matching_var}.  
\end{prop}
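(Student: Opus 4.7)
The plan is to follow the direct method in the calculus of variations, as is standard in LDDMM existence proofs (cf. Younes, \emph{Shapes and Diffeomorphisms}), with the only non-routine ingredient being to check that the specific fidelity term $\|\mu(1)-\tilde{\mu}\|_{W^*}^2$ behaves continuously along a weakly convergent minimizing sequence, in both the normalized and pushforward scenarios.

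First I would take a minimizing sequence $(v^{(k)})$ in $L^2([0,1],V)$. Since $E(v^{(k)})$ is bounded and the fidelity term is nonnegative, $\int_0^1 \|v_t^{(k)}\|_V^2 dt$ is bounded, so $(v^{(k)})$ is bounded in the Hilbert space $L^2([0,1],V)$. By reflexivity, extract a subsequence (still denoted $v^{(k)}$) with $v^{(k)} \rightharpoonup v^{*}$ weakly. Weak lower semicontinuity of the norm immediately gives $\int_0^1\|v_t^*\|_V^2 dt \leq \liminf_k \int_0^1\|v_t^{(k)}\|_V^2 dt$.

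The heart of the argument is to show that $\|\mu^{(k)}(1)-\tilde\mu\|_{W^*}^2 \longrightarrow \|\mu^*(1)-\tilde\mu\|_{W^*}^2$, where $\mu^{(k)}(1)$ denotes the state variable associated with $v^{(k)}$ and similarly for $v^*$. For this, I would invoke the classical LDDMM regularity result: if $V \hookrightarrow C^2_0(\R^n,\R^n)$, then the map $v \mapsto \phi_1^v$ sends weakly convergent sequences in $L^2([0,1],V)$ to sequences converging in the $C^1$ topology uniformly on compact subsets of $\R^n$; in particular $\phi_1^{v^{(k)}}(x)\to \phi_1^{v^*}(x)$ and $D_x\phi_1^{v^{(k)}}\to D_x\phi_1^{v^*}$ at every point $x$. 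The reason the statement requires the $C^2$ assumption (rather than merely $C^1$, which would suffice for existence of the flow as a diffeomorphism) is precisely that both group actions involve the Jacobian $D\phi$ at the support points of $\mu_0$, so we need joint convergence of $\phi$ and $D\phi$.

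With this at hand, I would unpack the kernel metric using \eqref{eq:metric_var}: $\|\mu^{(k)}(1)-\tilde\mu\|_{W^*}^2$ is a finite sum of terms of the form $r_i r_j \rho(|x_i-x_j|^2)\gamma(\langle d_i, d_j\rangle)$, where the new positions, directions and (for the pushforward model) weights are continuous functions of the finitely many values $\phi_1^{v^{(k)}}(x_{i,0})$ and $D_{x_{i,0}}\phi_1^{v^{(k)}}\cdot d_{i,0}$. Since $\rho$ and $\gamma$ are continuous, and since $D\phi\cdot d$ stays bounded away from zero along the sequence (because $\phi_1^{v^{(k)}}$ remains in $\mathrm{Diff}^1$ with Jacobians uniformly invertible on the finite set $\{x_{i,0}\}$, as follows from the $C^1$ convergence of the inverse flow), both the normalized and pushforward formulas depend continuously on this data, so the fidelity term converges as claimed. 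Combining with the lower semicontinuity of the regularization term yields $E(v^*)\leq\liminf_k E(v^{(k)})=\inf E$, so $v^*$ is a minimizer. The main obstacle is the weak-to-$C^1$ continuity of the flow map, but this is a standard result from the LDDMM literature that follows from Grönwall-type estimates on $\phi^v$ and $D\phi^v$ using the continuous embedding $V\hookrightarrow C^2_0(\R^n,\R^n)$.
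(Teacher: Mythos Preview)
Your proof is correct and follows essentially the same route as the paper: direct method with a weakly convergent minimizing sequence, invoking the standard LDDMM result (Younes, Chapter 8.2) that weak convergence in $L^2([0,1],V)$ implies uniform-on-compacts convergence of $\phi_1^{v}$ and $D\phi_1^{v}$, then continuity of the fidelity term in these finitely many values combined with weak lower semicontinuity of the regularization. Your additional remarks (why $C^2$ rather than $C^1$ is needed, and why $|D\phi\cdot d|$ stays bounded away from zero so the normalization is continuous) are correct refinements that the paper leaves implicit.
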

\begin{proof}
The result follows from an argument similar to that of the existence of minimizers in usual LDDMM registration problems. If $(v^{n})$ is a minimizing sequence in $L^2([0,1],V)$ then thanks the first term of $E$, we may assume that $(v^n)$ is bounded in $L^2([0,1],V)$ and therefore that, up to extracting a subsequence, $v^n \rightharpoonup v^*$ weakly in $L^2([0,1],V)$. It then follows from the results of \cite{Younes} (Chapter 8.2) that the sequence of diffeomorphisms $(\phi_1^{v^n})$ and their first-order differentials $(d\phi_1^{v^n})$ converge uniformly on every compact respectively to $\phi_{1}^{v^*}$ and $d\phi_{1}^{v^*}$. In particular, for all $i=1,\ldots,P$, $\phi_{1}^{v^n}(x_i) \rightarrow \phi_{1}^{v^*}(x_i)$ and $d\phi_{1}^{v^n}(x_i) \rightarrow d\phi_{1}^{v^*}(x_i)$. Then, from the expressions of the group actions and the metric \eqref{eq:metric_var}, we obtain that either $\|(\phi_1^{v^n})_{*} \mu_0 - \tilde{\mu} \|_{W^*}^2 \xrightarrow[n\rightarrow \infty]{} \|(\phi_1^{v^*})_{*} \mu_0 - \tilde{\mu} \|_{W^*}^2$ or $\|(\phi_1^{v^n})_{\#} \mu_0 - \tilde{\mu} \|_{W^*}^2 \xrightarrow[n\rightarrow \infty]{} \|(\phi_1^{v^*})_{\#} \mu_0 - \tilde{\mu} \|_{W^*}^2$. Finally, using the weak lower semicontinuity of the norm in $L^2([0,1],V)$, it gives in both cases:
\begin{equation*}
    E(v^*) \leq \lim \inf_{n \rightarrow \infty} E(v^n)
\end{equation*}
and consequently $v^*$ is a global minimizer of $E$. 
\end{proof}

\subsection{Hamiltonian dynamics}
\label{ssec:hamiltonian_dynamics}
By fixing the final time condition $\mu(1)$ and minimizing $\int_{0}^{1} \|v_t\|_V^2 dt$ with those boundary constraints, the resulting path $t \mapsto \mu(t)$ corresponds to a \textit{geodesic} in $\mathcal{D}$ for the metric induced by the metric on the deformation group. We can further characterize those geodesics as solutions of a Hamiltonian system. For that purpose, we follow the general setting developed in \cite{arguillere14:_shape} for similar optimal control problems. 

In our situation, we can describe the state $\mu(t)$ as a set of $P$ particles each given by the triplet $(x_i(t),d_i(t),r_i(t)) \in \R^n \times \S^{n-1} \times \R_{+}^{*}$ representing its position, orientation vector and weight. From \ref{ssec:group_action}, we have that $x_i(t) = \phi_t^v(x_{i,0})$, $d_i(t) = \overline{D\phi^v_t\cdot d_{i,0}}$ and $r_i(t) = r_{i,0}$ for the normalized action and $r_i(t)=|D\phi^v_t\cdot d_{i,0}| r_{i,0}$ in the pushforward case. Differentiating with respect to $t$, the state evolution may be alternatively described by the set of ODEs 
$$
\left \{
\begin{array}[h]{l}
\dot{x}_i (t)= v_t(x_i(t)) \\
\dot{d}_i (t) = P_{d_i(t)^{\bot}}(D v_t\cdot d_{i}(t)) \\
\dot{r}_i (t) = \left\{ 
\begin{array}[h]{l}
0 \ \ \text{(normalized)} \\
\langle d_i(t), D v_t\cdot d_{i}(t) \rangle r_i(t) \ \ \text{(pushforward)}
\end{array}
\right.
\end{array}
 \right.
$$
 
\noindent where $P_{d_i(t)^\bot}$ denotes the orthogonal projection on the subspace orthogonal to $d_i(t)$, $D v_t \cdot d_i(t)$ corresponds to the infinitesimal variation of the action of $D\phi$ on vectors of $\R^n$ introduced in \ref{ssec:group_action}: it is given specifically by $D v_t \cdot d_i(t) = D_{x_i(t)} v_t (d_i(t))$ in the tangent case and $D v_t \cdot d_i(t) = -(D_{x_i(t)} v_t)^{T}(d_i(t))$ in the normal case. Note that other choices of transformation of the weights could be treated quite similarly by modifying accordingly the last equation in the previous system. In what follows, we detail the derivations of the optimality equations in the case of tangent direction vectors for both normalized and pushforward group action models, the situation of normal vectors being easily tackled in similar fashion.    

\subsubsection{Normalized action}
In the case of normalized action, $\{r_i(t)\}_{i=1}^P$ are time independent as previous discussed. So we can choose the state variable of the optimal control problem to be $q:= \{(x_i,d_i), \ i =1,\cdots,P\} \in \mathbb{R}^{2dP}$ with the infinitesimal action
\begin{align*}
\xi_q v = \left\{\left( v(x_i),P_{d_i^{\perp}}(D_{x_i}v(d_i))\right),\ i=1,\cdots,P\right\}
\end{align*} 
and introduce the Hamiltonian

\begin{align*}
H(p,q,v) &= (p|\xi_qv) -\frac{1}{2}\|v\|^2_V \\
         &=\sum_{i=1}^N \langle p_i^{(1)}, v(x_i) \rangle + \langle P_{d_i^{\perp}}(p_i^{(2)}), D_{x_i}v(d_i) \rangle -\frac{1}{2}\|v\|^2_V,
\end{align*}
where 
\begin{align*}
p= \left\{\left(p_i^{(1)},p_i^{(2)}\right), \ i=1,\cdots,P \right\} \in \mathbb{R}^{2dP}
\end{align*}
is the adjoint variable of state $q$. We call $p_i^{(1)}$ the \textit{spatial momentum} and $p_i^{(2)}$ the \textit{directional momentum}. From Pontryagin's maximum principle, the Hamiltonian dynamics is given by the forward system of equations
\begin{align}
\left\{\begin{array}{ll}
\dot{x}_i(t) &= v_t(x_i(t)) \\
\dot{d}_i(t) &= P_{d_i(t)^{\perp}}(D_{x_i(t)}v_t(d_i(t))) \\
\dot{p}_i^{(1)}(t) &= -(D_{x_i(t)}v_t)^T p_i^{(1)}(t) \\
             &-(D_{x_i(t)}^{(2)}v_t(\cdot,d_i(t)))^T\left( P_{d_i(t)^{\perp}}(p_i^{(2)}(t))\right) \\
\dot{p}_i^{(2)}(t) &= -(D_{x_i(t)} v_t)^T P_{d_i(t)^{\perp}}(p_i^{(2)}(t)) \\
                &+ \langle d_i(t),p_i^{(2)}(t) \rangle D_{x_i(t)}v_t(d_i(t)) \\
                &+ \langle d_i(t),D_{x_i(t)}v_t(d_i(t)) \rangle p_i^{(2)}(t)
\end{array}\right.
\label{eq:forward_normalized}
\end{align}
and optimal vector fields $v$ satisfy
\begin{align*}
\langle v_t, h \rangle_V &= \left( p(t),\xi_{q(t)}h \right) \\
&= \sum_{i=1}^P \langle p_i^{(1)},h(x_i)\rangle + \langle P_{d_i(t)^{\perp}}(p_i^{(2)}(t)),D_{x_i(t)}h(d_i(t))\rangle, 
\end{align*}
for any $h \in V$ and $t \in [0,1]$. The reproducing property and reproducing property for the derivatives in a vector RKHS give \cite{Sommer2013} that $\forall x \in \R^n, \ z \in \mathbb{R}^n, v \in V$ and multi-index $\alpha$,
\begin{align*}
&\langle K(x,\cdot)z,v \rangle_V = (z \otimes \delta_x |v) \\
&\langle D_1^{\alpha} K(x,\cdot),v \rangle_V = z^T D^{\alpha} v(x).
\end{align*}
With the above properties, we obtain the following expression of $v$
\begin{align}
\label{eq:v_normalized}
v_t(\cdot) &= \sum_{k=1}^P K(x_k(t),\cdot)p_k^{(1)}(t) \nonumber \\
&+ D_1 K(x_k(t),\cdot)\left(d_k(t),P_{d_k(t)^{\perp}}(p_k^{(2)}(t))\right).
\end{align}
where we use the shortcut notation $D_1 K(x,\cdot)(u_1,u_2)$ for the vector $D_1 (K(x,\cdot) u_2)(u_1)$. In Figure \ref{fig:normalized}, we show an example of geodesic and resulting deformation for a single Dirac varifold, which is obtained as the solution of \eqref{eq:forward_normalized} with the initial momenta shown in the figure. It illustrates the combined effects of the spatial momentum which displaces the position of the Dirac and of the directional momentum that generates a local rotation of the direction vector.

\begin{figure}
    \centering
    \begin{tabular}{ccc}
    \includegraphics[trim = 15mm 15mm 15mm 15mm ,clip,width=4cm]{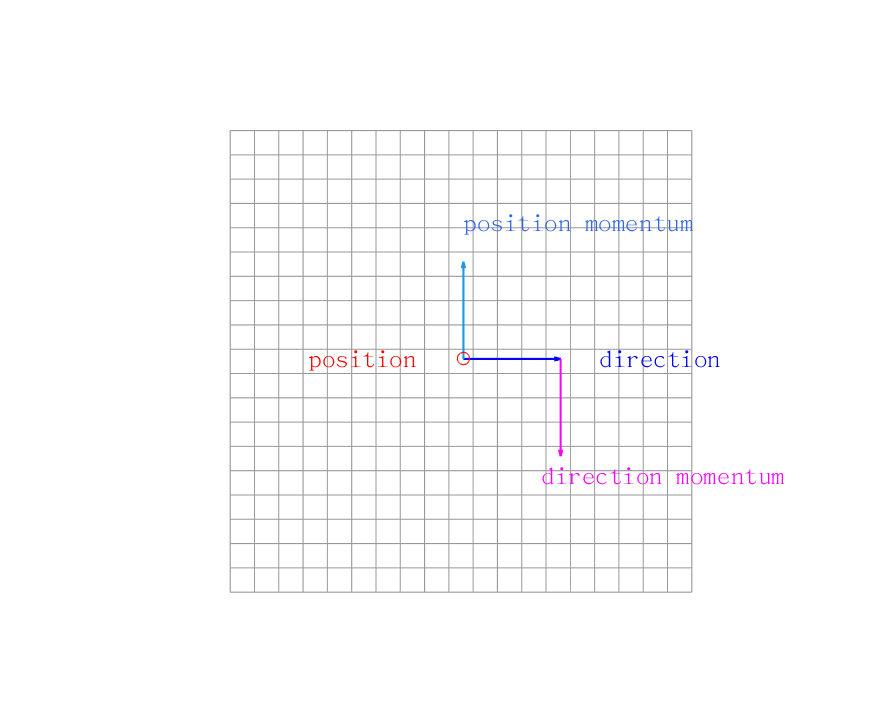} 
    &\includegraphics[trim = 15mm 15mm 15mm 15mm ,clip,width=4cm]{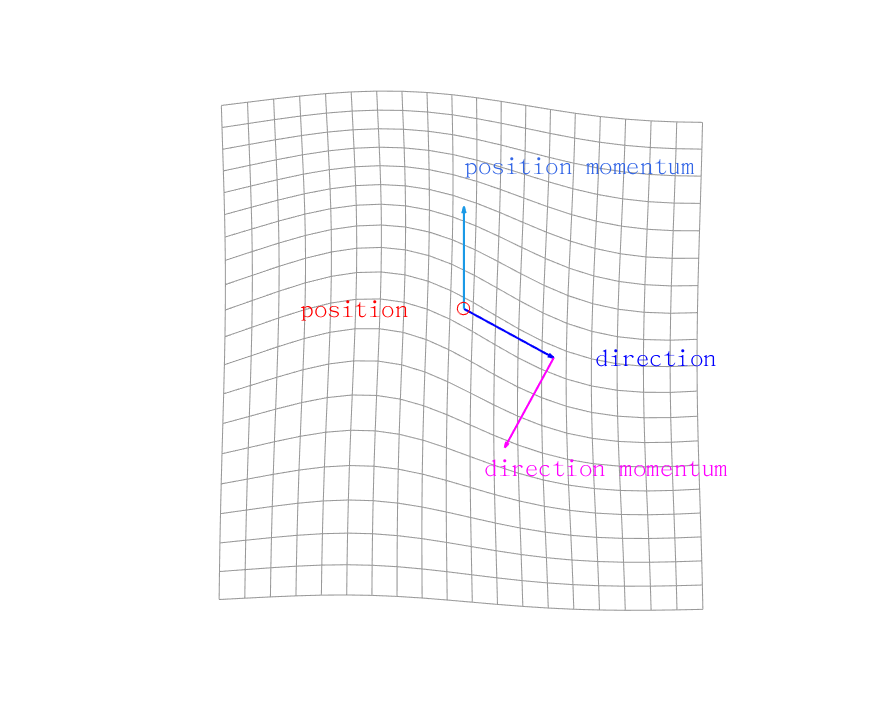} 
    &\includegraphics[trim = 15mm 15mm 15mm 15mm ,clip,width=4cm]{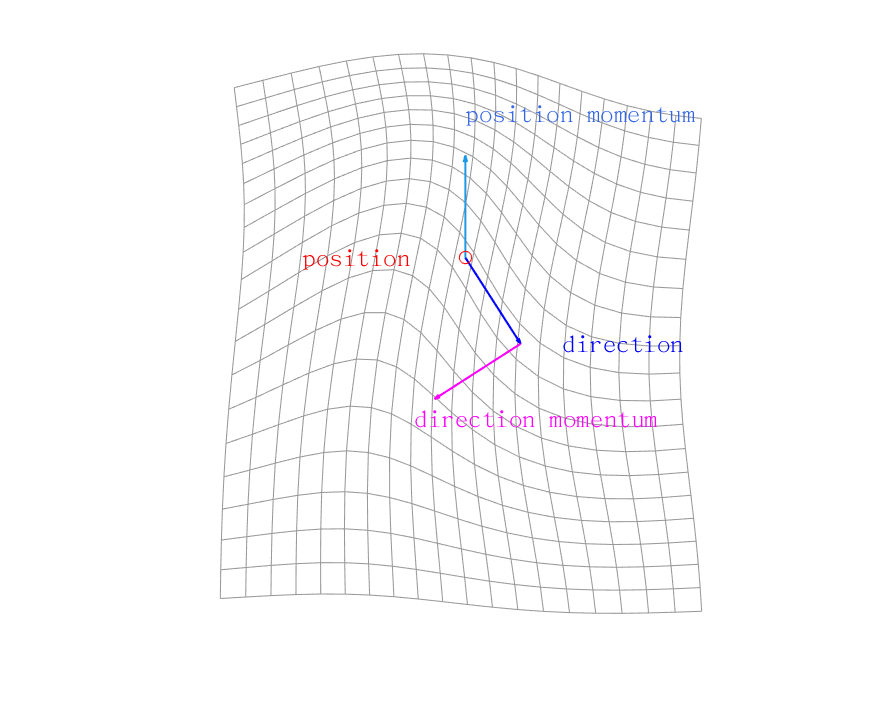}\\
    $t=0$ & $t=1/2$& $t=1$  \\
    \end{tabular}
    \caption{Example of geodesic for a single Dirac in the normalized action case.}
    \label{fig:normalized}
\end{figure}

\subsubsection{Pushforward action}
As in the previous section, we set the state variable $q := \{(x_i,d_i,r_i), \  i=1,\cdots,P\} \in (\R^n \times \S^{n-1} \times \R_{+}^{*} )^P$, the infinitesimal action
\begin{align*}
\xi_q v = \left\{ \left(v(x_i),P_{d_i^{\perp}} (d_{x_i}v(d_i)), r_i \langle d_i, d_{x_i}v(d_i) \rangle \right), \ i=1,\cdots,P \right\}    
\end{align*}
and the Hamiltonian
\begin{align}\label{fullHam}
H(p,q,v) = \sum_{i=1}^P \left\langle p_i^{(1)}, v(x_i) \right\rangle + \left\langle P_{d_i^{\perp}}(p_i^{(2)}), d_{x_i}v(d_i) \right\rangle + p_i^{(3)} r_i \left\langle  d_i, d_{x_i}v(d_i) \right\rangle - \frac{1}{2} \|v\|_V^2,
\end{align}
where $p = \left\{ \left( p_i^{(1)}, p_i^{(2)}, p_i^{(3)} \right), \ i=1,\cdots,P \right\} \in \R^{(2d+1)P}$. Applying again Pontryagin's maximum principle, we obtain the forward system
\begin{align} \label{fullforward}
\left\{
\begin{array}{ll}
\dot x_i &=  v_t(x_i) \\
\dot d_i &= P_{d_i^{\perp}} (d_{x_i} v(d_i)) \\
\dot r_i &= r_i \left\langle d_i, d_{x_i} v_t(d_i) \right\rangle\\
\dot p_i^{(1)} &= -(d_{x_i(t)}v_t)^T p_i^{(1)}-(d_{x_i(t)}^{(2)}v(\cdot,d_i))^T (P_{d_i^{\perp}}(p_i^{(2)}) ) - p_i^{(3)} r_i d_{x_i}v(\cdot,d_i)^T d_i\\
\dot p_i^{(2)} &= -d_{x_i(t)}v_t^T(p_i^{(2)}) + \left( \left\langle  d_i,p_i^{(2)}\right\rangle - r_i p_i^{(3)} \right) \left[ d_{x_i}v_t + d_{x_i}v^T_t \right] (d_i) \\
          &+  \left\langle d_i, d_{x_i}v_t(d_i) \right\rangle p_i^{(2)} \\
\dot p_i^{(3)} &= - p_i^{(3)} \left\langle d_i, d_{x_i}v_t(d_i) \right\rangle
\end{array} \right. 
\end{align}
with optimal vector field of the form
\begin{align} \label{fullvf}
v_t(x) = \sum_{k=1}^P K(x_k,x) p_k^{(1)} + D_1K(x_k,x)\left(d_k,P_{d_k^{\perp}}(p_k^{(2)})+p_k^{(3)}r_k d_k \right) .
\end{align}
From the forward equations \eqref{fullforward}, we see that $\frac{d}{dt} \langle d_i(t), d_i(t) \rangle =0$ and $\frac{d}{dt} r_i(t) p_i^{(3)}(t) =0$, hence $\|d_i(t)\|$ and $ r_i(t) p_i^{(3)}(t)$ are constant along geodesic paths. Similarly to the normalized action case, we can use use those conservation properties to reduce the number of state and dual variables as follows. 

Let the new state variable be $q = \{(x_i,u_i), \ i=1,\cdots,P\}$ and the Hamiltonian
\begin{align} \label{reduced_ham}
H(p,q,v) = \sum_{i=1}^P  \langle p_i^{(1)}, v(x_i) \rangle +  \langle p_i^{(2)}, d_{x_i}v(u_i)\rangle -  \frac{1}{2} \| v\|_V^2
\end{align}
The forward equations and optimal vector field $v$ derived from this Hamiltonian are
\begin{align} \label{reduced_foward}
\left\{
\begin{array}{l}
\dot x_i(t)  = v_t(x_i(t)) \\
\dot d_i(t) = d_{x_i(t)}v_t(u_i(t)) \\
\dot p_i^{(1)}(t) =  -(d_{x_i(t)}v_t)^T p_i^{(1)}-(d_{x_i(t)}^{(2)}v(\cdot,u_i))^T p_i^{(2)} \\
\dot p_i^{(2)}(t) = -(d_{x_i(t)}v_t)^T p_i^{(2)}(t)
\end{array} \right. 
\end{align}
and 
\begin{align} \label{reduced_vf}
v_t(x) = \sum_{k=1}^P K(x_k,x)p_k^{(1)}(t) + D_1 K(x_k,x)(u_k(t),p_k^{(2)}(t)).    
\end{align}
Then this new system is rigorously equivalent to the original one in the following sense:
\begin{prop} \label{reduced_prop}
Any solution of \eqref{reduced_foward} + \eqref{reduced_vf} is such that $\left( x_i(t),\overline{u_i(t)},|u_i(t)| \right)$ is a solution of \eqref{fullforward} + \eqref{fullvf}. Conversely, any solution $\left(x_i(t),d_i(t),r_i(t) \right)$ of \eqref{fullforward} + \eqref{fullvf} with initial conditions satisfying $\left\langle p_i^{(2)}(0), u_i(0) \right\rangle = r_i(0) p_i^{(3)}(0)$ gives the solution $\left(x_i(t),r_i(t)d_i(t)\right)$ to \eqref{reduced_foward} + \eqref{reduced_vf}.
\end{prop}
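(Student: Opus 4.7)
The natural approach is to introduce the bijection $u_i = r_i d_i$ on the forward variables, supplemented by a matching algebraic relation between the two sets of adjoint variables, and then to verify that the Hamiltonian ODEs of the two systems transform into each other under this change of variables. The identification of the adjoints is forced by requiring the two expressions \eqref{fullvf} and \eqref{reduced_vf} for the optimal vector field $v_t$ to coincide: comparing term by term and using bilinearity of $D_1 K(x_k,\cdot)(\cdot,\cdot)$, one is led to keep $p_i^{(1)}$ unchanged and to set $p_i^{(3)} = \langle p_i^{(2),\mathrm{red}}, d_i \rangle$ and $p_i^{(2),\mathrm{full}} = r_i P_{d_i^\perp}(p_i^{(2),\mathrm{red}}) + p_i^{(3)} d_i$ (so that the adjoint compatibility $\langle p_i^{(2),\mathrm{full}}, d_i \rangle = p_i^{(3)}$ holds automatically).

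For the direct implication (reduced $\Rightarrow$ full), I would start from a solution $(x_i, u_i, p_i^{(1)}, p_i^{(2)})$ of \eqref{reduced_foward}+\eqref{reduced_vf} and set $d_i := u_i/|u_i|$ and $r_i := |u_i|$. Differentiating these two relations along the reduced flow and using linearity of $d_{x_i} v_t$ in its vector argument directly yields $\dot d_i = P_{d_i^\perp}(d_{x_i}v_t(d_i))$ and $\dot r_i = r_i\langle d_i, d_{x_i}v_t(d_i)\rangle$, i.e.\ the first three equations of \eqref{fullforward}. The vector field identity \eqref{fullvf}$=$\eqref{reduced_vf} then holds by construction of the adjoint correspondence above, and the remaining three dual equations in \eqref{fullforward} follow by differentiating the defining relations for $p_i^{(2),\mathrm{full}}$ and $p_i^{(3)}$ in time and substituting the reduced dual equations from \eqref{reduced_foward}.

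For the converse, the key step is to show that the hypothesis $\langle p_i^{(2)}(0), u_i(0) \rangle = r_i(0) p_i^{(3)}(0)$, which using $u_i = r_i d_i$ and $r_i(0) > 0$ reduces to $\langle p_i^{(2)}(0), d_i(0) \rangle = p_i^{(3)}(0)$, propagates along the full flow. This follows from the observation that the quantity $\phi_i(t) := \langle p_i^{(2)}(t), d_i(t) \rangle - p_i^{(3)}(t)$ satisfies a homogeneous linear ODE derived from \eqref{fullforward}, hence vanishes for all $t$ as it does at $t=0$. With this constraint in force throughout, the inverse algebraic identification $u_i := r_i d_i$, $p_i^{(2),\mathrm{red}} := P_{d_i^\perp}(p_i^{(2)})/r_i + p_i^{(3)} d_i$ is well defined, and direct differentiation together with the orthogonal decomposition of the full dual equations recovers \eqref{reduced_foward} and \eqref{reduced_vf}.

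\textbf{Main obstacle.} The principal subtlety lies in pinpointing the correct correspondence between the two systems' adjoint variables: the full system carries an extra scalar momentum $p_i^{(3)}$ attached to $r_i$, whereas the reduced system absorbs that information into the radial component of $p_i^{(2),\mathrm{red}}$ along $d_i$. Once the identification is found, the verifications themselves are routine bookkeeping based on the decomposition $\R^n = d_i^\perp \oplus \R d_i$, the linearity of $d_{x_i} v_t$, and the chain rule.
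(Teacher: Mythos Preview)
Your plan is correct in spirit and coincides with the paper's approach: set up the change of variables $u_i\leftrightarrow(d_i,r_i)$, find the compatible correspondence of adjoints by matching the two formulas for the optimal field, and then verify that the Hamiltonian ODEs transform into one another. The gap is in the specific adjoint identification you propose. Matching \eqref{fullvf} with \eqref{reduced_vf} only pins down $P_{d_i^\perp}\big(p_i^{(2),\mathrm{full}}\big)=r_i\,P_{d_i^\perp}\big(p_i^{(2),\mathrm{red}}\big)$ and $p_i^{(3)}=\langle p_i^{(2),\mathrm{red}},d_i\rangle$; the \emph{radial} component $\langle p_i^{(2),\mathrm{full}},d_i\rangle$ is not determined by the vector field at all. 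Your choice $\langle p_i^{(2),\mathrm{full}},d_i\rangle=p_i^{(3)}$ does not yield a solution of the full $\dot p_i^{(2)}$ equation in \eqref{fullforward}: with it, $\langle d_i,p_i^{(2),\mathrm{full}}\rangle-r_ip_i^{(3)}=(1-r_i)\,p_i^{(3)}$ is generically nonzero, so the middle term of that equation survives, and one checks (e.g.\ by projecting onto $d_i$) that $\frac{d}{dt}\big(r_iP_{d_i^\perp}(p_i^{(2),\mathrm{red}})+p_i^{(3)}d_i\big)$ does not equal the right-hand side. The identification that works, and that the paper uses, is the simpler one
\[
p_i^{(2),\mathrm{full}}=|u_i|\,p_i^{(2),\mathrm{red}},\qquad p_i^{(3)}=\big\langle p_i^{(2),\mathrm{red}},\overline{u_i}\big\rangle,
\]
for which $\langle d_i,p_i^{(2),\mathrm{full}}\rangle-r_ip_i^{(3)}\equiv 0$ and the verification is immediate.

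The same slip reappears in your converse. Along the full flow one has $\frac{d}{dt}(r_ip_i^{(3)})=0$ and
\[
\frac{d}{dt}\big\langle p_i^{(2)},d_i\big\rangle
=2\big\langle d_i,d_{x_i}v_t(d_i)\big\rangle\,\big(\langle p_i^{(2)},d_i\rangle-r_ip_i^{(3)}\big),
\]
so the quantity obeying a homogeneous linear ODE---and hence propagating from $t=0$---is $\psi_i:=\langle p_i^{(2)},d_i\rangle-r_ip_i^{(3)}$, not your $\phi_i=\langle p_i^{(2)},d_i\rangle-p_i^{(3)}$; the latter satisfies $\dot\phi_i=2\langle d_i,d_{x_i}v_t(d_i)\rangle\,\phi_i+\langle d_i,d_{x_i}v_t(d_i)\rangle\,(3-2r_i)\,p_i^{(3)}$, which is inhomogeneous. (The ambiguous ``$u_i(0)$'' in the proposition's hypothesis should therefore be read as $d_i(0)$, giving the condition $\langle p_i^{(2)}(0),d_i(0)\rangle=r_i(0)p_i^{(3)}(0)$.) With these two corrections your scheme goes through. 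Note, incidentally, that the paper avoids the second round of direct differentiation in the converse: it solves the reduced system from initial data built out of the full initial conditions, applies the forward direction to manufacture a full solution with the same initial data, and then invokes uniqueness of ODE solutions.
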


\begin{figure}
    \centering
    \begin{tabular}{ccc}
    \includegraphics[trim = 12mm 12mm 12mm 12mm ,clip,width=4cm]{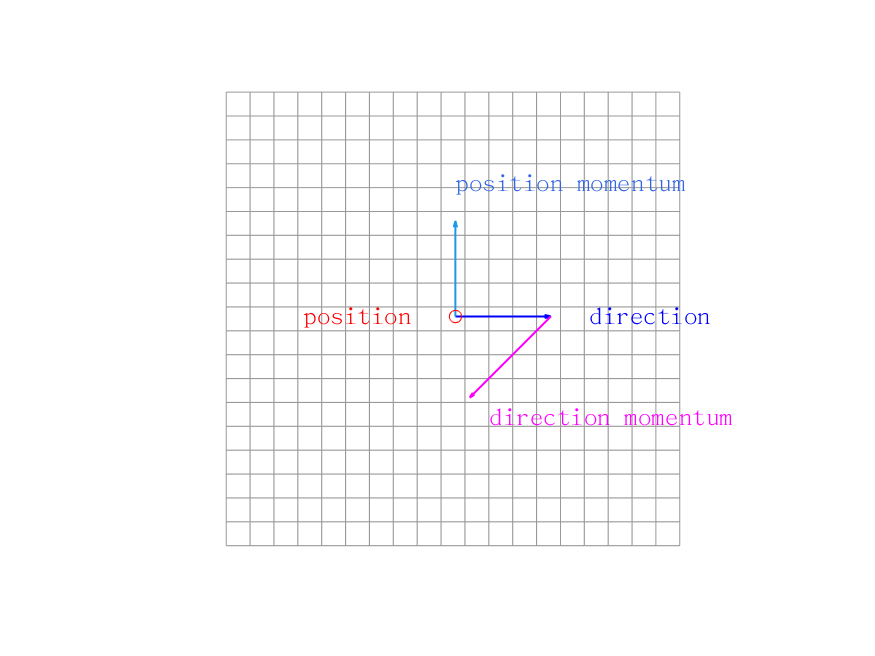} 
    &\includegraphics[trim = 12mm 12mm 12mm 12mm ,clip,width=4cm]{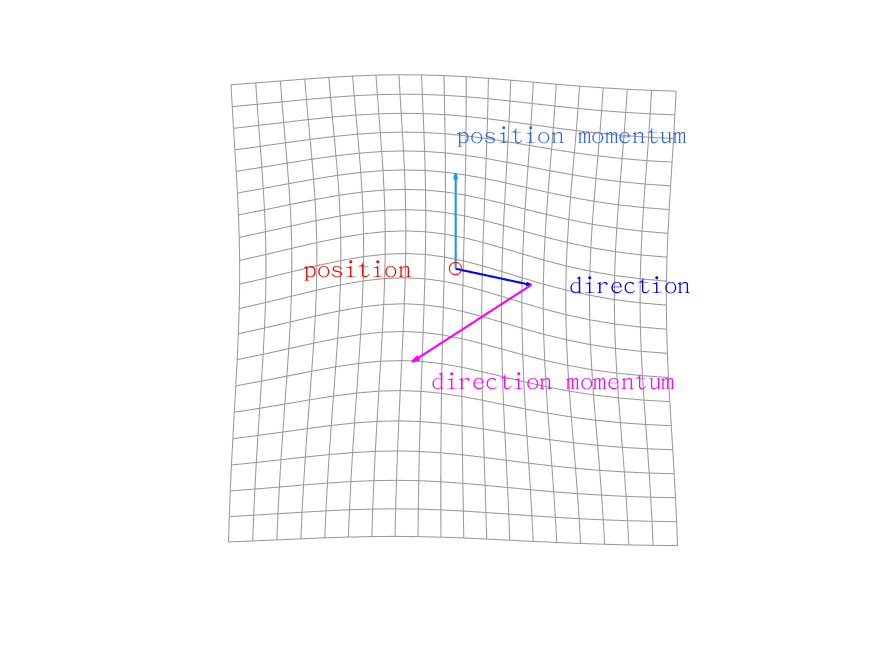} 
    &\includegraphics[trim = 12mm 12mm 12mm 12mm ,clip,width=4cm]{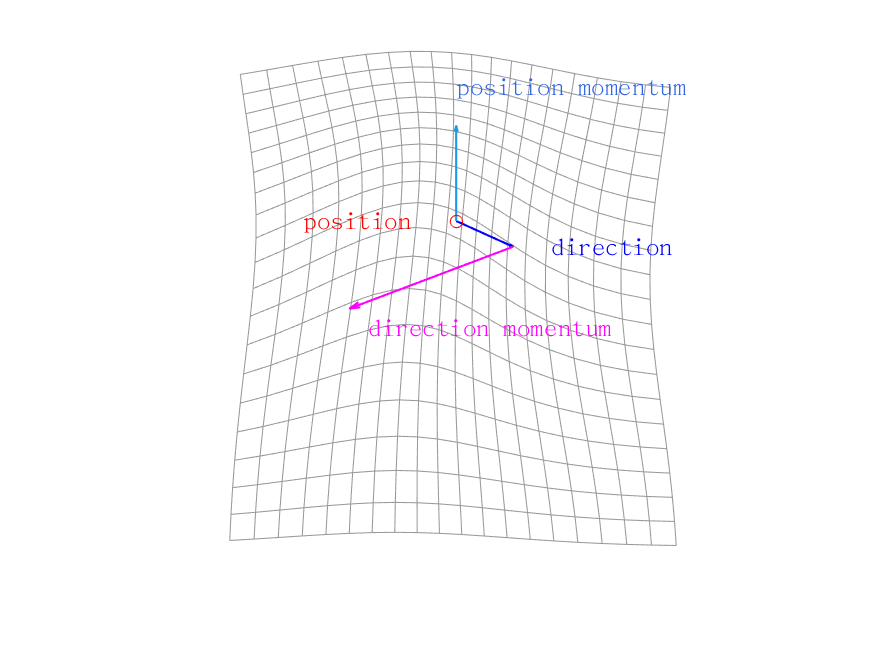}\\
    $t=0$ & $t=1/2$& $t=1$  \\
    \includegraphics[trim = 12mm 12mm 12mm 12mm ,clip,width=4cm]{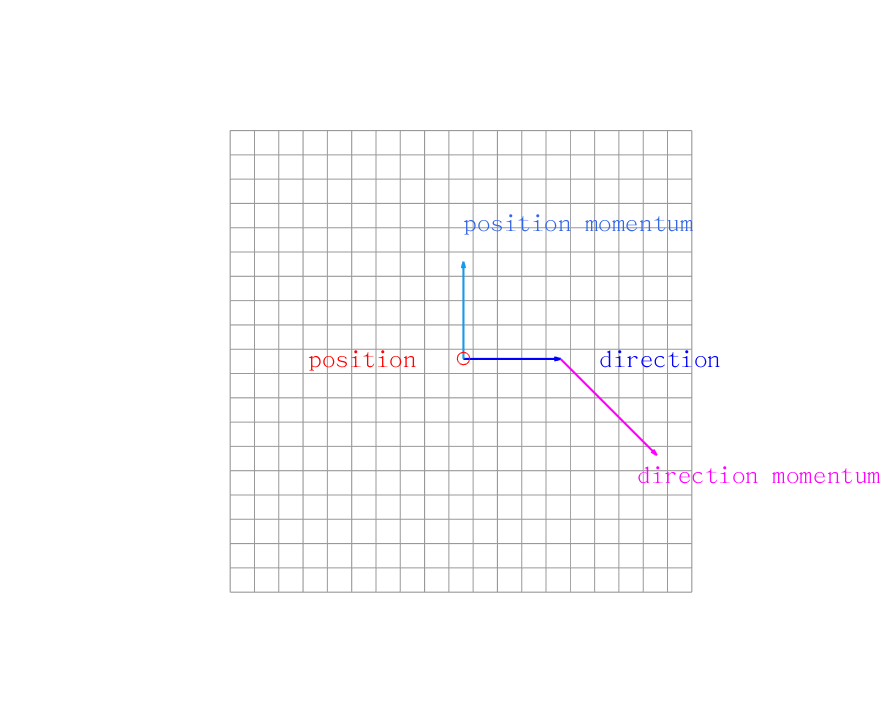} 
    &\includegraphics[trim = 12mm 12mm 12mm 12mm ,clip,width=4cm]{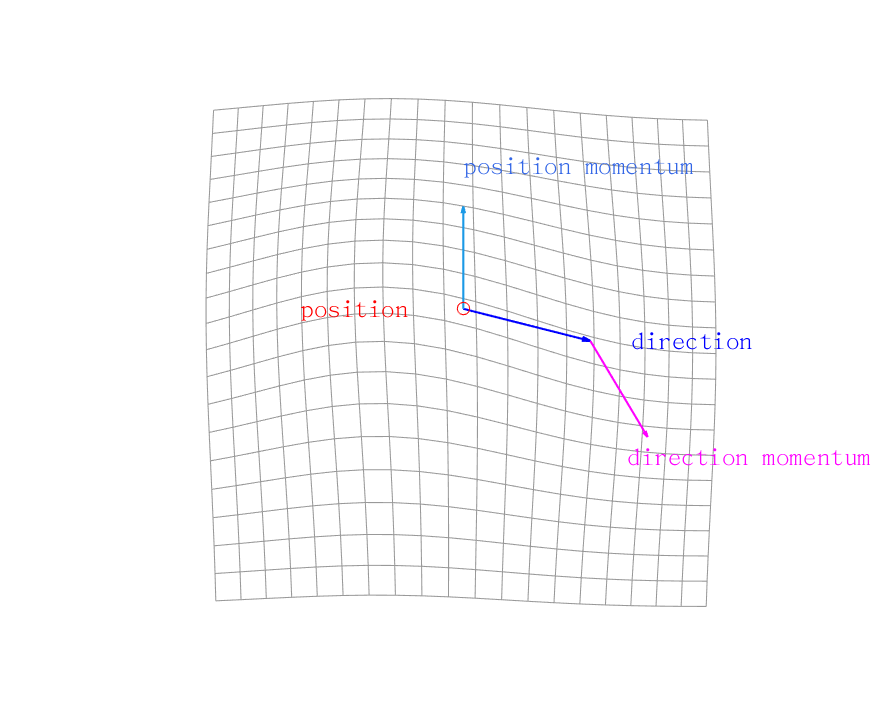} 
    &\includegraphics[trim = 12mm 12mm 12mm 12mm ,clip,width=4cm]{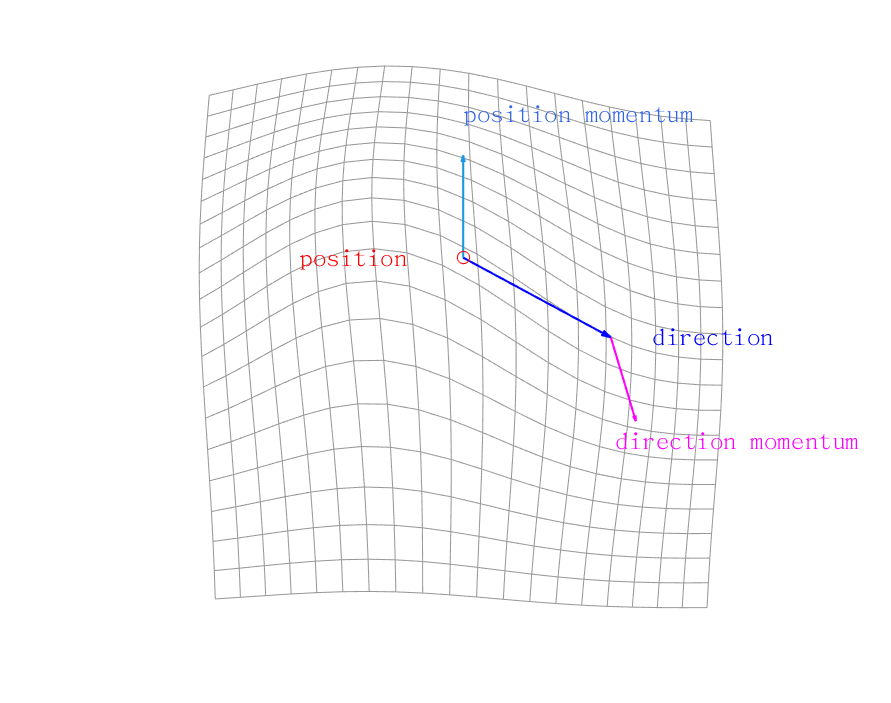}\\
    $t=0$ & $t=1/2$& $t=1$ 
    \end{tabular}
    \caption{Examples of geodesics in the pushforward action case.}
    \label{fig:pushforward}
\end{figure}

The proof is given in Appendix. Note that these equations can be also obtained in a more particular case as the geodesic equations on the tangent bundle of the space of landmarks, as derived for instance in \cite{Arguillere2015} (Section 3.5). In what follows, we will thus replace the system \eqref{fullforward} by \eqref{reduced_foward}. 

\begin{remark}
We point out that there are other conserved quantities in the previous system. In particular, it's easy to see that $\left\langle p_i^{(2)}, d_i \right\rangle$ is constant along geodesics since
\begin{align*}
\frac{d}{dt} \left\langle p_i^{(2)}, d_i \right\rangle = - \left\langle (d_{x_i}v_t)^T p_i^{(2)}, d_i \right\rangle + \left\langle p_i^{(i)}, d_{x_i} v_t(d_i) \right\rangle    =0.
\end{align*}
\end{remark}
Figure \ref{fig:pushforward} shows two geodesic trajectories of a single Dirac varifold for different initial momenta. In particular, we can again observe the effect of the directional momentum $p^{(2)}$ on the dynamics and resulting deformations. In addition to similar rotation effects as in the normalized action case, local contraction or expansion can be generated as well, depending precisely on the angle $\langle p_i^{(2)}, d_i \rangle$.

\section{Registration algorithm and implementation}
\label{sec:matching_algo}
We now turn to the issue of numerically solving the optimization problem \eqref{eq:matching_var}. We will follow the commonly used method for such problems called \textit{geodesic shooting} (cf \cite{Vialard2012b}). Indeed, from the developments of Section \ref{ssec:hamiltonian_dynamics}, we see that optimizing \eqref{eq:matching_var} with respect to vector fields $v$ can be done equivalently by restricting to geodesics and thus by optimizing over the initial momenta variables $p^{(1)}_0$ and $p^{(2)}_0$ that completely parametrize those geodesics through the Hamiltonian equations.  

\subsection{Computation of $E$}
Let a template and target discrete varifold be given as in Section \ref{ssec:optimal_control}. As mentioned above, we can rewrite the energy $E$ as a function of the initial momenta that we will denote $p^{(1)}_0=(p^{(1)}_i(0))$ and $p^{(2)}_0=(p^{(2)}_i(0))$:  
\begin{equation}
\label{eq:energy_discrete}
 E(p^{(1)}_0,p^{(2)}_0) = H_r(p_0,q_0) + \lambda \underbrace{\|\mu(1) - \tilde{\mu} \|_{W^*}^2}_{:=g(q(1))}
\end{equation}
where $q_0$ is the initial state, $\mu(1)$ is the varifold corresponding to the final time state $q(1)$ with $g(q(1))$ the resulting fidelity term between $\mu(1)$ and the target varifold, and $H_r$ is the \textit{reduced Hamiltonian} $H_r(p,q)=H(p,q,v)$ for the optimal $v$ given by \eqref{eq:v_normalized} or \eqref{reduced_vf} (note that $H_r(p(t),q(t))$ is conserved along solutions of the Hamiltonian systems thus giving the above expression of the energy). 

The expression of $H_r$ as well as the resulting reduced Hamiltonian equations can be obtained in all generality by plugging the expression of $v$ in the equations of Section \ref{ssec:optimal_control}. In our implementation, we actually restrict to the more particular case of radial scalar kernels for the vector fields in $V$, i.e we assume that $K(x,y)=h(|x-y|^2) I_n$. Then the reduced Hamiltonian for the normalized case becomes:
\begin{equation}
    H_r(p,q) = \frac{1}{2} \left\langle \overline{K}_q p, p \right\rangle,
\end{equation}
where $\overline{K}_q$ is a symmetric positive definite matrix which is defined as follows. Let

\begin{align*}
&H =(H)_{ik}= (h_{ki}) \\
&\overline{A} = (\overline{A})_{ik} = 2 \dot h_{ki} \langle x_k-x_i,d_k \rangle \\
&\overline{B} = (\overline{B})_{ik} = - \left[  4\ddot h_{ik} \langle x_k-x_i,d_i \rangle \langle  x_k-x_i, d_k \rangle + 2 \dot h_{ki} \langle d_i, d_k \rangle \right]
\end{align*}
with $h_{ki}$ being a shortcut for $h(|x_i-x_j|^2)$ and 

\begin{align*}
P_{d^{\perp}}(\cdot) =\left( \begin{array}{ccc}
I - d_1 \cdot d_1^T &  &\textrm{\huge{0}}  \\
   & \ddots & \\
\textrm{\huge{0}} & &  I - d_N \cdot d_N^T 
\end{array}\right)
\end{align*}
Then we define
\begin{align*}
\overline{K}_q := \left( \begin{array}{cc}
I_{Pd} &0  \\
0   & P_{d^{\perp}}  \\
\end{array}
 \right)^T
\left( \begin{array}{cc}
H \otimes I_{Pd}  & \overline{A} \otimes I_{Pd}\\
(\overline{A} \otimes I_{Pd})^T & \overline{B} \otimes I_{Pd} 
\end{array}
 \right)
\left( \begin{array}{cc}
I_{Pd} & 0  \\
0   & P_{d^{\perp}}  \\
\end{array}
 \right)  
\end{align*}
 where $\otimes$ denotes the Kronecker product. For the pushforward action case, we define $H$, $A$ and $B$ as in normalized action case with $d_i$ and $d_k$ replaced by $u_i$ and $u_k$, then
 \begin{equation}
    H_r(p,q) = \frac{1}{2} \left\langle K_q p, p \right\rangle,
\end{equation}
where 

\begin{align*}
K_q := \left( \begin{array}{cc}
H \otimes I_{Pd}  & A \otimes I_{Pd}\\
(A \otimes I_{Pd})^T & B \otimes I_{Pd} 
\end{array}
 \right).     
\end{align*}
 This gives us explicitly the first term of the energy in \eqref{eq:energy_discrete}.

Now, the time evolution of $q$ and $p$ can be also rewritten equivalently in reduced Hamiltonian form, which expressions are given in full for radial scalar kernels in the Appendix. We numerically integrate those differential systems using an RK4 scheme, which we experienced to be better-adapted to these systems than the simpler Euler midpoint integrator used in \cite{Charon2017}. Then, given initial momenta $p^{(1)}_0$ and $p^{(2)}_0$, integrating those equations forward in time produces the final state $q(1)$ and its corresponding varifold $\mu(1)$. It is then straightforward to evaluate the second term in \eqref{eq:energy_discrete} through the expression of the varifold norm \eqref{eq:metric_var}; in the pushforward case one only needs to apply the additional intermediate operation of converting state $q(1) = (x_i(1),u_i(1))$ into $(x_i(1),\overline{u_i}(1),|u_i|(1))$. We will discuss different choices of kernels for the varifold metric in the result section.  

\subsection{Computation of the gradient of $E$}
The second element we need is the gradient of the energy with respect to the momenta. The first term being directly a function of $p_0$, it can be differentiated easily and gives the following gradient: 
\begin{align*}
    \nabla_{p_0} H_r(p_0,q_0) &= \overline{K}_q p_0 \ \ \text{(normalized)} \\
    \nabla_{p_0} H_r(p_0,q_0) &= K_q p_0 \ \ \text{(pushforward)}
\end{align*}

The fidelity term $g(q(1))$ in \eqref{eq:energy_discrete}, however, is a function of the final state $q(1)$ which is in turn a function of the momenta through the Hamiltonian system of equations. The computation of the gradient is therefore more involved due to the complicated dependency of $q(1)$ in $p_0$. The standard approach for optimal control problems of this form (cf for example \cite{Vialard2012b} or \cite{arguillere14:_shape}) is to introduce the \textit{adjoint Hamiltonian system}: 
$$ \overset{\bold{\ldotp}}{Z}(t) = d(\partial_p H_r , \partial_q H_r)^{T} Z(t) $$ 
with $Z = (\tilde{q}, \tilde{p})^{T}$ the vector of the adjoint variables. Then, as detailed in \cite{arguillere14:_shape}, the gradient of $g(q(1))$ with respect to $p_0$ is given by $\tilde{p}(0)$ where $(\tilde{q}(t), \tilde{p}(t))^{T}$ is the solution of the adjoint system integrated backward in time with $\tilde{q}(1) = \nabla g(q(1))$ and $\tilde{p}(1) = 0$. 

For the particular Hamiltonian equations considered here, the adjoint system is tedious to derive and to implement. We simply avoid that by approximating the differentials appearing in the adjoint system by finite difference of the forward Hamiltonian equations, following the suggestion of \cite{arguillere14:_shape} (Section 4.1) which we refer to for details. Note that another possibility would be to take advantage of automatic differentiation methods, as used recently for some LDDMM registration problems by the authors of \cite{Kuhnel2017}. 

Lastly, the end time condition $\nabla g(q(1))$ in the previous adjoint system is computed by direct differentiation of the varifold norm \eqref{eq:metric_var} with respect to the final state variables. This is actually more direct than in previous works like \cite{Charon2,Charon2017} where the gradients are computed with respect to the positions of vertices of the underlying mesh. Here, we have specifically, for the normalized model:  
\begin{align*}
    \partial_{x_i} g(q(1)) &= 2 \sum_{j=1}^{P} 2 r_i r_j \rho'(|x_i(1)-x_j(1)|^2) \gamma(\langle d_i(1), d_j(1) \rangle) .(x_i(1)-x_j(1)) \ \ -2\ldots \\
    \partial_{d_i} g(q(1)) &= 2 \sum_{j=1}^{P} r_i r_j \rho(|x_i(1)-x_j(1)|^2) \gamma'(\langle d_i(1), d_j(1) \rangle). d_j(1) \ \ -2\ldots
\end{align*}
where the $\ldots$ denote a similar term for the differential of the cross inner product $\langle \mu(1),\tilde{\mu} \rangle_{W^*}$. In the pushforward case with state variables $(x_i(1),u_i(1))$, we first compute $d_i(1)=u_i(1)/|u_i(1)|$ and $r_i(1)=|u_i(1)|$ and obtain $\partial_{x_i} g(q(1))$ with the same expression as above while $\partial_{u_i} g(q(1))$ is given by a simple chain rule. 

Finally, with the above notations, the gradient of $E$ writes:
\begin{equation}
    \label{eq:gradient_E}
    \nabla_{p_0} E = \overline{K}_{q} p_0 + \lambda \tilde{p}(0)
\end{equation}
respectively $\nabla_{p_0} E = K_{q} p_0 + \lambda \tilde{p}(0)$ in the pushforward case.

\subsection{Gradient descent algorithm}
The solution to the minimization of \eqref{eq:energy_discrete} is then computed by gradient descent on $p_0 = \left( p^{(1)}_{0,i},p^{(2)}_{0,i} \right)_{i=1,\ldots,P}$. Note that this is a non-convex optimization problem. Until convergence, each iteration consists of the following steps:
\vskip2ex
\noindent (1) Given the current estimate of $p_0$, integrate the Hamiltonian equations forward in time to obtain $q(1)$.
\vskip1ex
\noindent (2) Compute the gradient $\nabla g(q(1))$.
\vskip1ex
\noindent (3) Integrate the adjoint Hamiltonian system backward in time to obtain $\nabla_{p_0} E$. 
\vskip1ex
\noindent (4) Update $p_0$: we use two separate update steps for the spatial and directional momentum which are selected, at each iteration, using a rough space search approach leading to the lowest value of $E$.    

\section{Results}
\label{sec:results}
We now present a few results of registration using the previous algorithm on simple and synthetic examples. Our implementation equally supports objects in 2D or 3D, we will however focus on examples in $\R^2$ here simply to allow for an easier visualization and interpretation of the results.   

\subsection{Curve registration}
\begin{figure}
    \centering
    \begin{tabular}{cccc}
    \includegraphics[trim = 20mm 20mm 20mm 20mm ,clip,width=3.3cm]{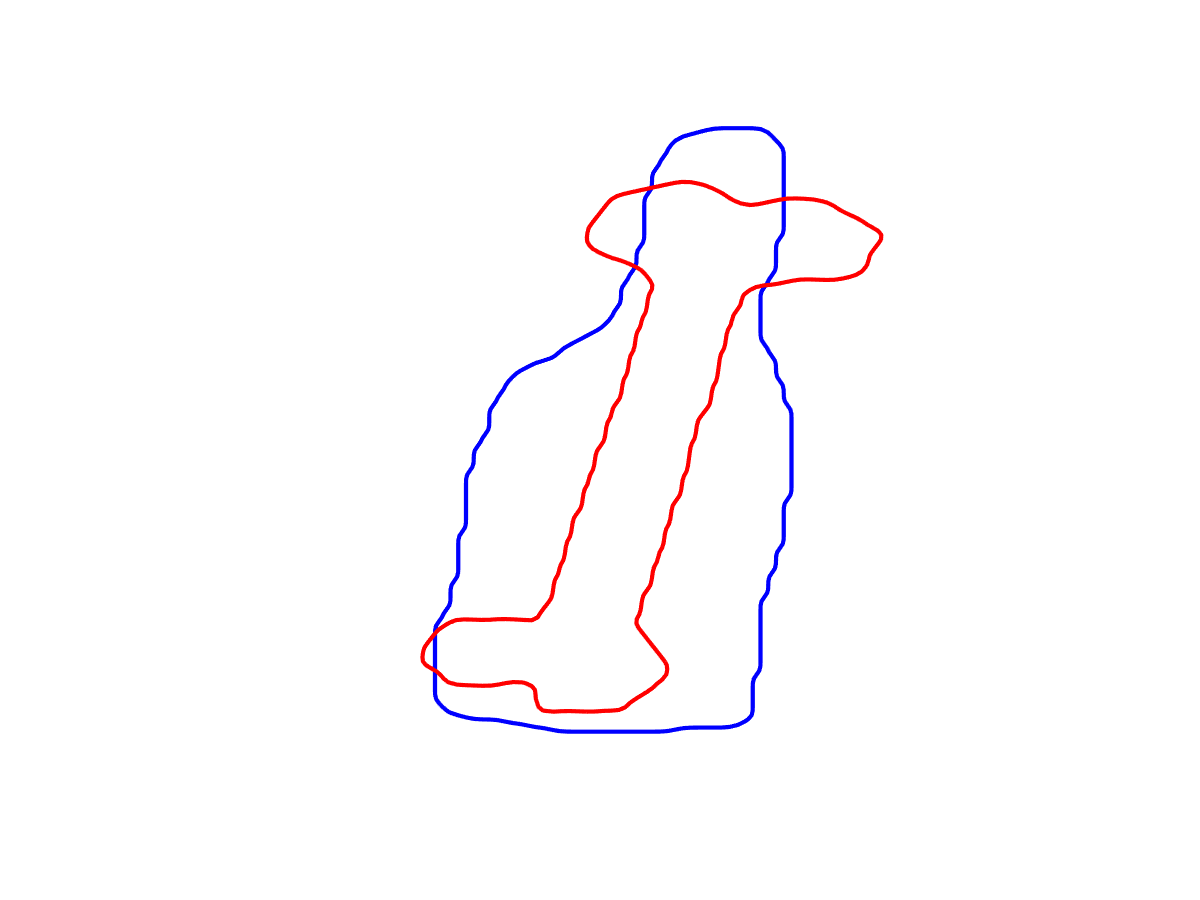} 
    &\includegraphics[trim = 20mm 20mm 20mm 20mm ,clip,width=3.3cm]{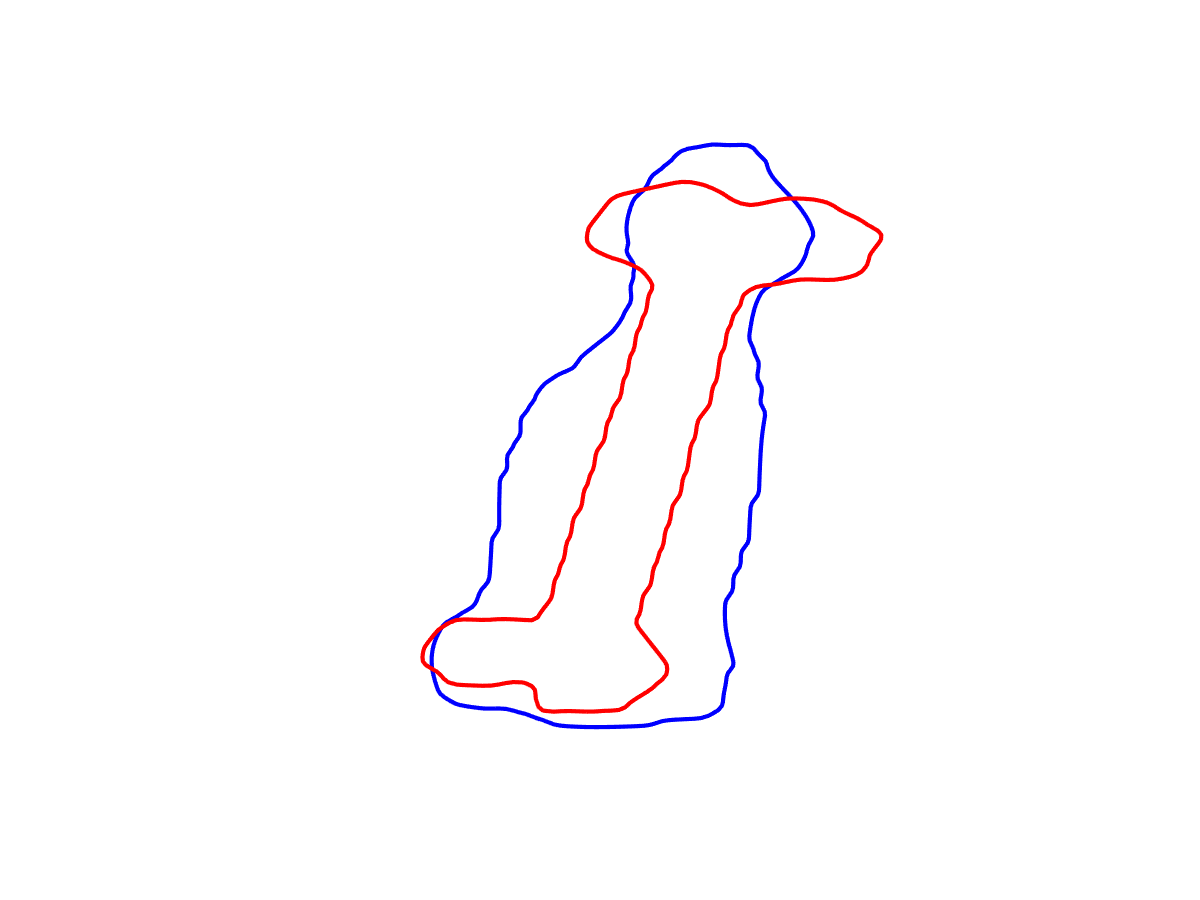} 
    &\includegraphics[trim = 20mm 20mm 20mm 20mm ,clip,width=3.3cm]{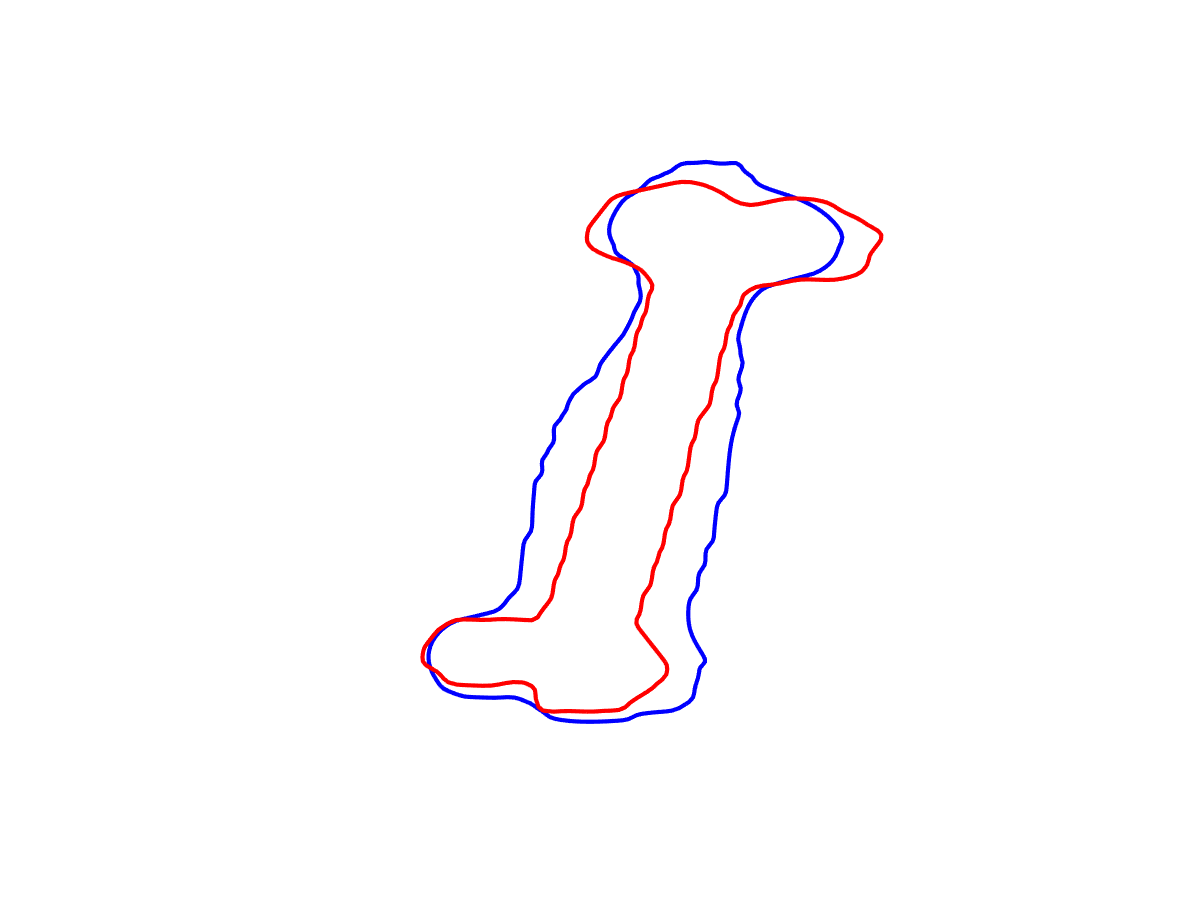}
    &\includegraphics[trim = 20mm 20mm 20mm 20mm ,clip,width=3.3cm]{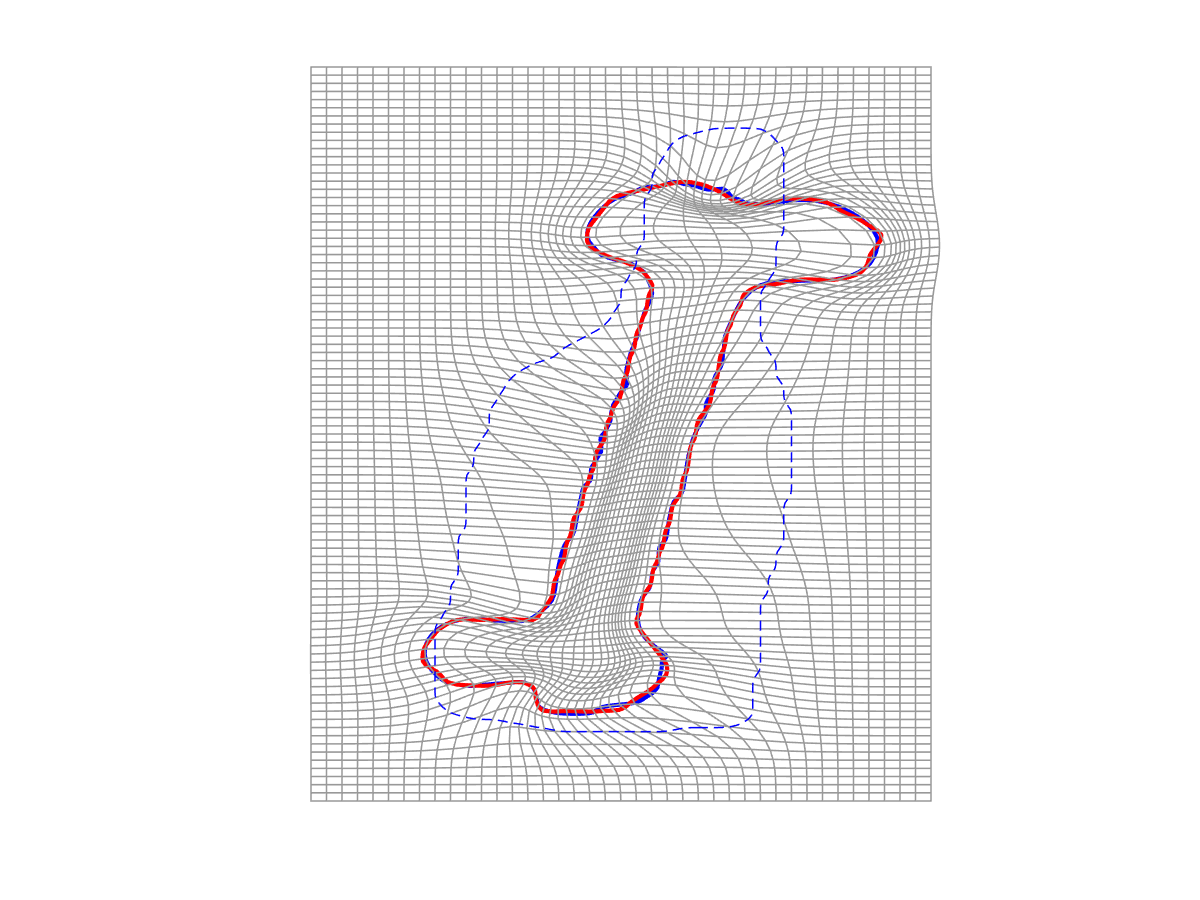} \\
    $t=0$ & $t=1/3$ & $t=2/3$  & $t=1$ \\
    \includegraphics[trim = 20mm 20mm 20mm 20mm ,clip,width=3.3cm]{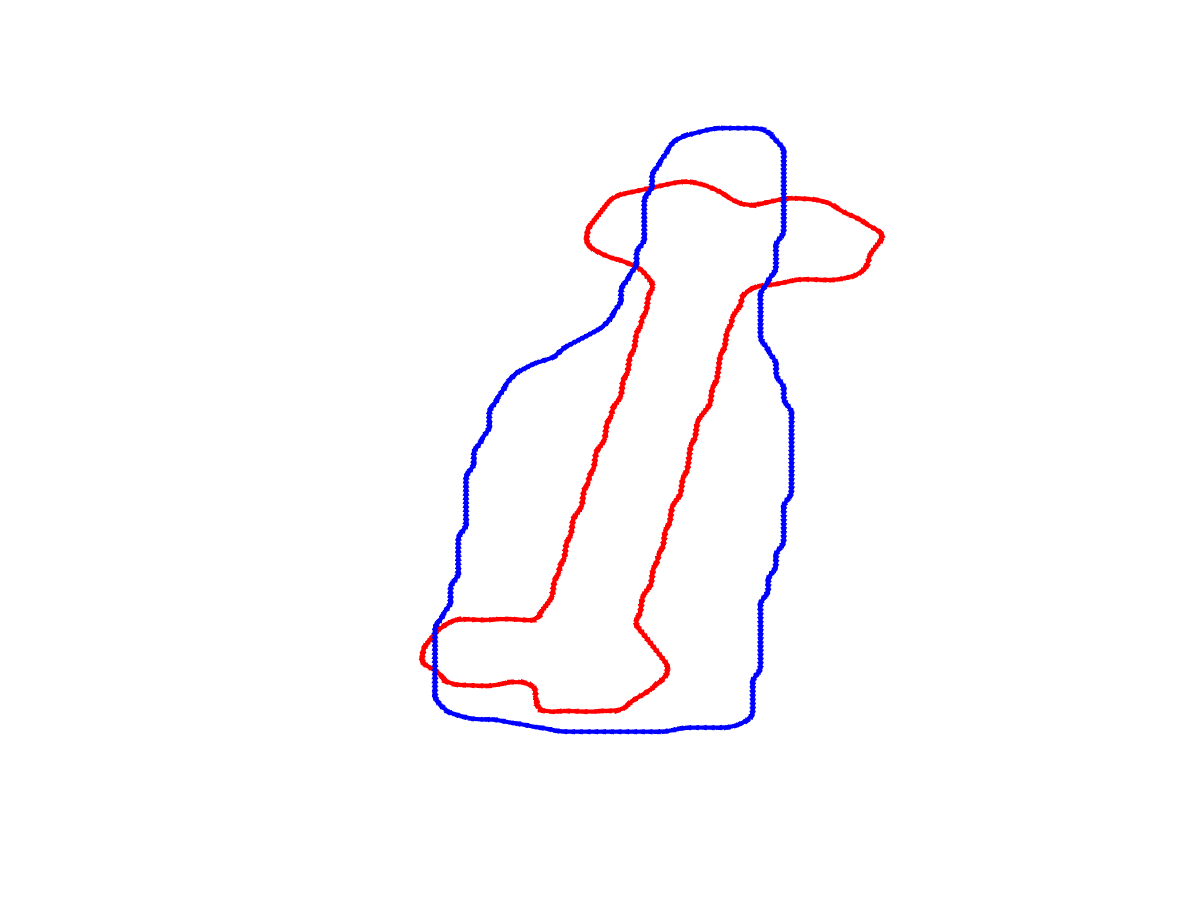} 
    &\includegraphics[trim = 20mm 20mm 20mm 20mm ,clip,width=3.3cm]{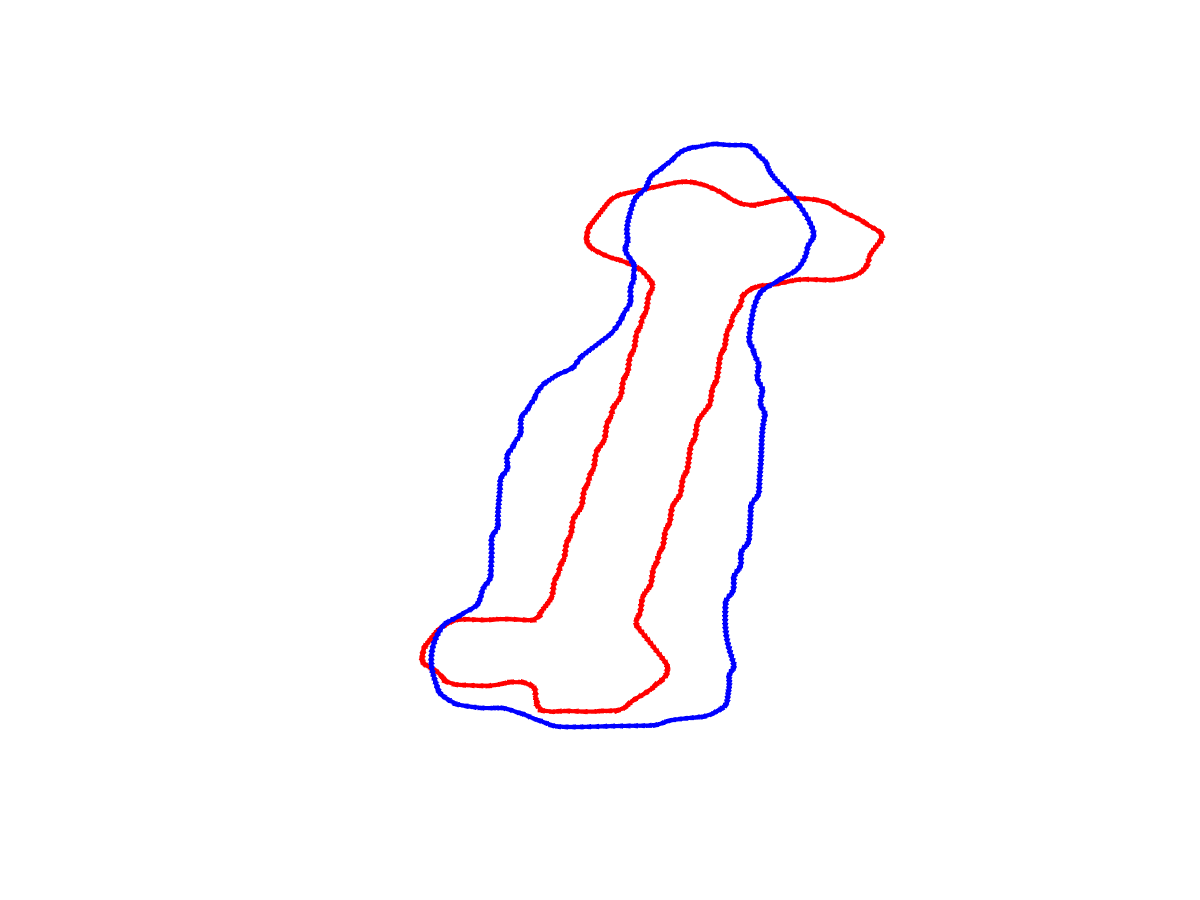} 
    &\includegraphics[trim = 20mm 20mm 20mm 20mm ,clip,width=3.3cm]{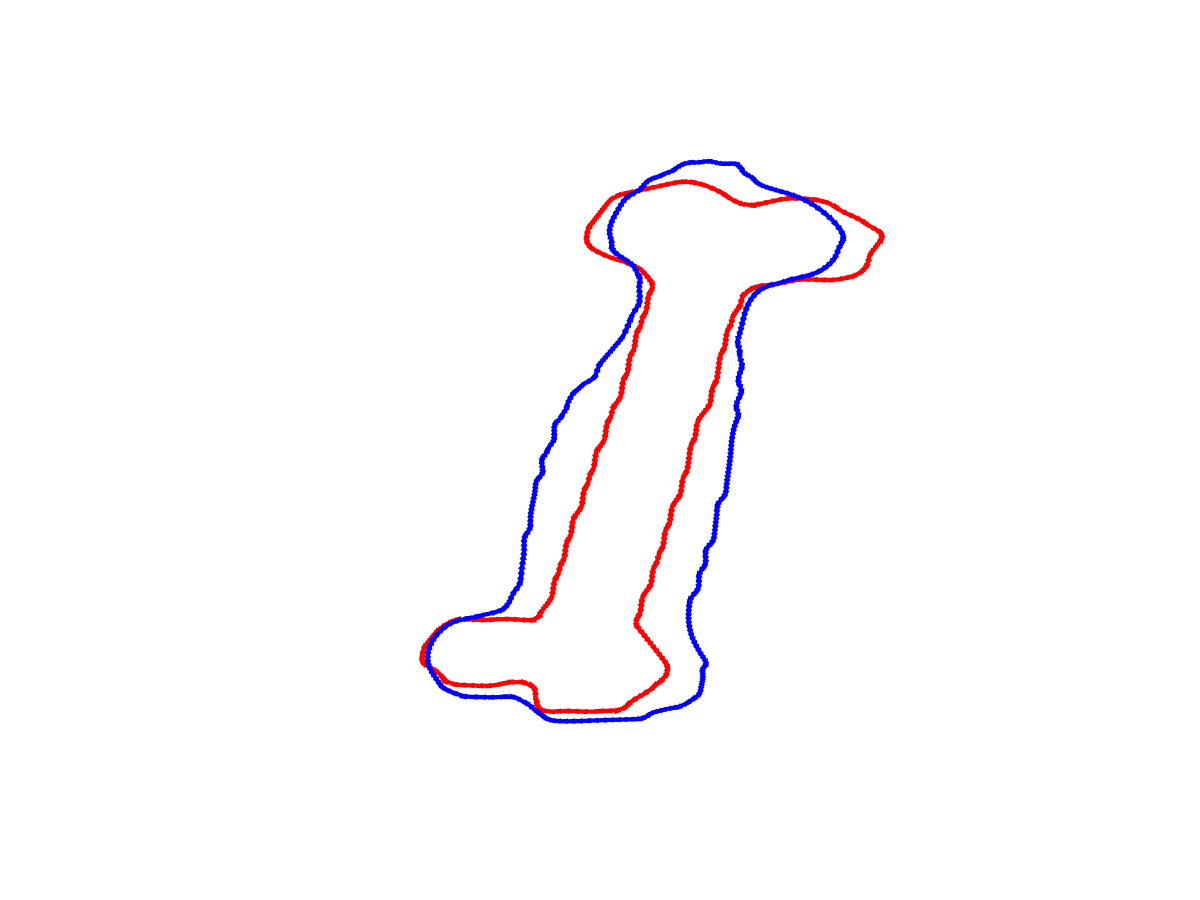}
    &\includegraphics[trim = 20mm 20mm 20mm 20mm ,clip,width=3.3cm]{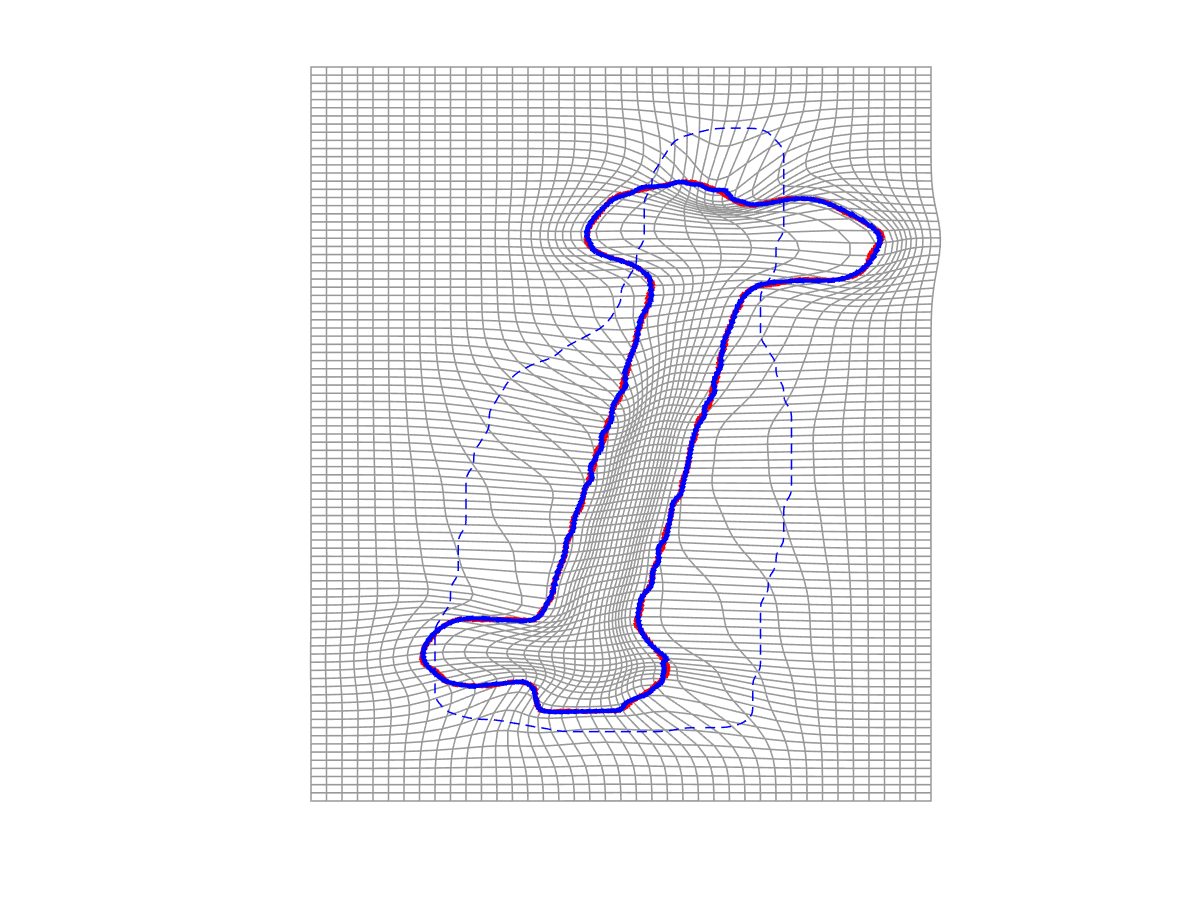}     
    \end{tabular}
    \includegraphics[width=5cm]{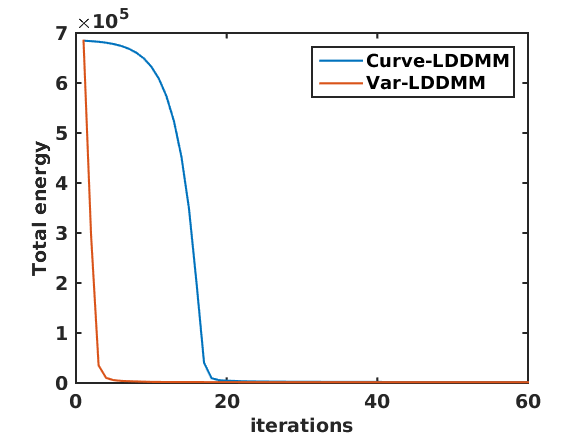}
    \caption{Curve registration using point-mesh LDDMM (1st row) and our proposed discrete varifold LDDMM (2nd row). On the last row is shown the evolution of the total energy across the iterations for both algorithms.}
    \label{fig:bottle}
\end{figure}
We begin with a toy example of standard curve matching to compare the result and performance of our discrete varifold LDDMM registration algorithm with the state-of-the-art LDDMM approach for curves such as the implementations of \cite{Glaunes2008,Charon2017}. The former methods share a very similar formulation to \eqref{eq:matching_var} and also make use of varifold metrics as fidelity terms, the essential difference being that the state of the optimal control problem is there the set of vertices of the deformed template curve which is only converted to a varifold for the evaluation of the fidelity term at each iteration. But the dynamics of geodesics still correspond to usual point set deformation under the LDDMM model.   

We consider here the pushforward model for the action of diffeomorphisms on discrete varifolds that we have seen is compatible with the action of diffeomorphisms on curves. In this case, the two formulation and optimization problems for curve registration are theoretically equivalent up to discretization precision. We verify it with the example of Figure \ref{fig:bottle} for which both algorithms are applied with the same deformation kernel, varifold metric and optimization scheme. Note that in our approach, template and target curves are first (and only once at the beginning) converted to their discrete varifold representations as explained in Section \ref{sec:discrete_varifolds}. 

As we can see, the resulting geodesics and deformations are consistent between the two methods. This is also corroborated by the very similar values of the energy at convergence. Interestingly however, although each iteration in our model is arguably more expansive numerically compared to standard curve-LDDMM due to the increased complexity of the Hamiltonian equations, the algorithm converges in a significantly lesser number of iterations. Whether this observation generalizes to other examples or other optimization methods will obviously require more careful examination in future work. 

\subsection{Registration of directional sets}
We now turn to examples that are more specific to the framework of discrete varifolds. 

\subsubsection*{Choice of the varifold metric} 
First, we examine more closely the effect of the metric $\|\cdot\|_{W^*}$ on the registration of discrete varifolds. The framework we propose can indeed support many choices for the kernel functions $\rho$ and $\gamma$ that define fidelity metrics $\|\cdot\|_{W^*}$ with possibly very different properties. This has been already analyzed quite extensively in \cite{Charon2017} but only in the situation where varifolds associated to a curve or a surface. We consider here the same examples of kernels and briefly discuss what are the specific effects to expect when matching more general varifolds in $\mathcal{D}$ which may involve several orientation vectors at a given position.

\begin{figure}
    \centering
    \begin{tabular}{ccccc}
    \rotatebox{90}{\phantom{aaaaa} Binet}
    &\includegraphics[trim = 8mm 8mm 8mm 8mm ,clip,width=3cm]{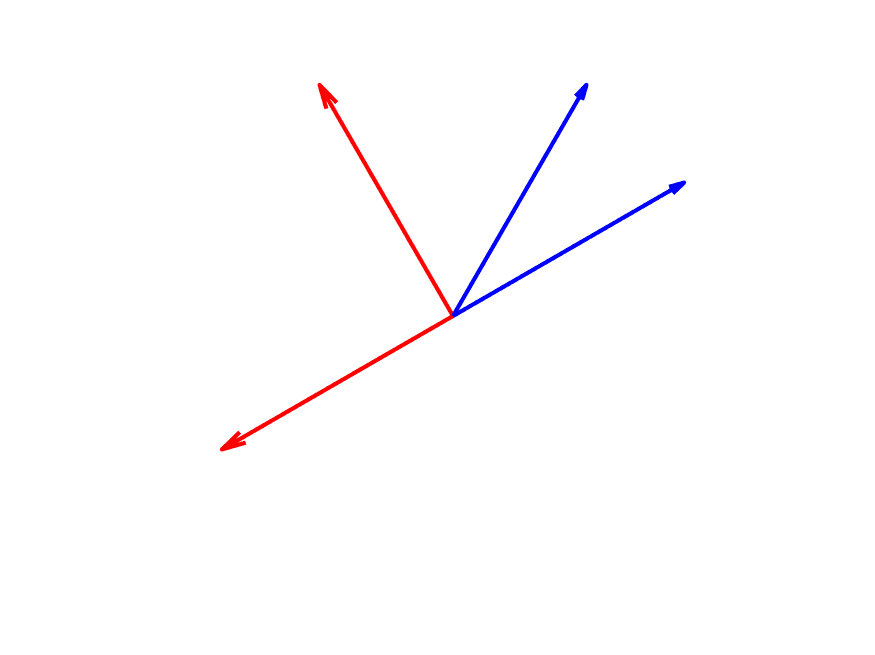} 
    &\includegraphics[trim = 8mm 8mm 8mm 8mm ,clip,width=3cm]{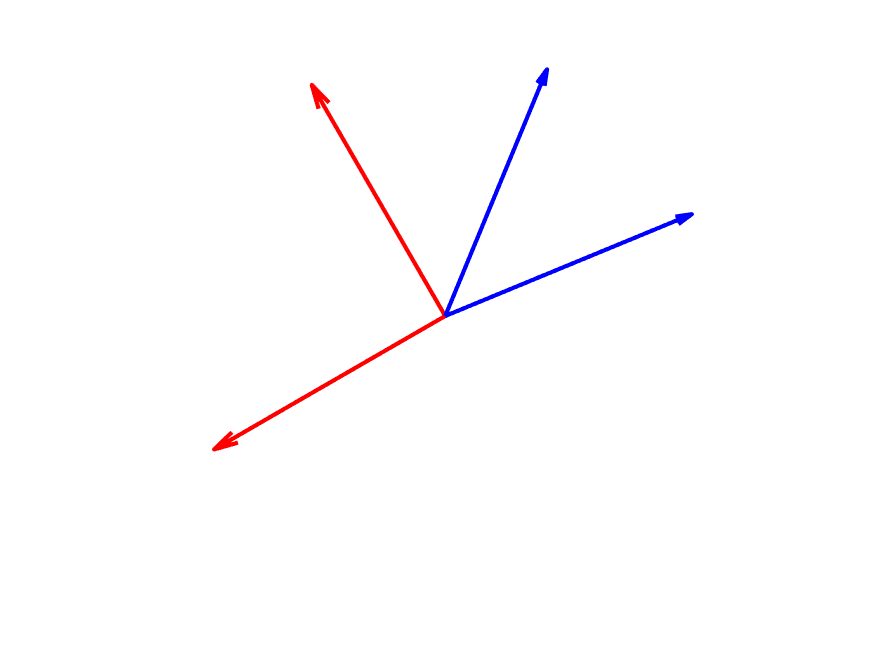} 
    &\includegraphics[trim = 8mm 8mm 8mm 8mm,clip,width=3cm]{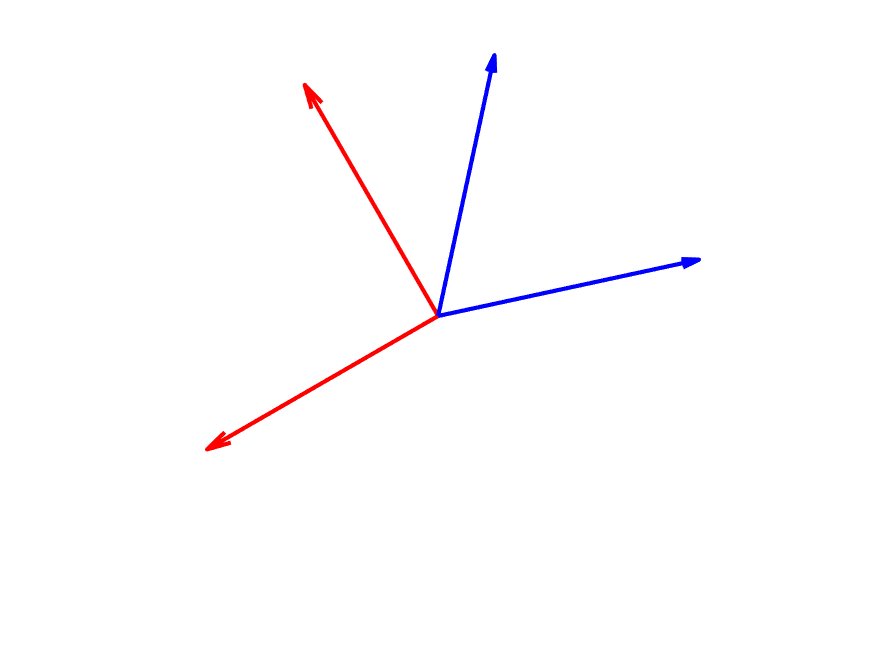}
    &\includegraphics[trim = 8mm 8mm 8mm 8mm ,clip,width=3cm]{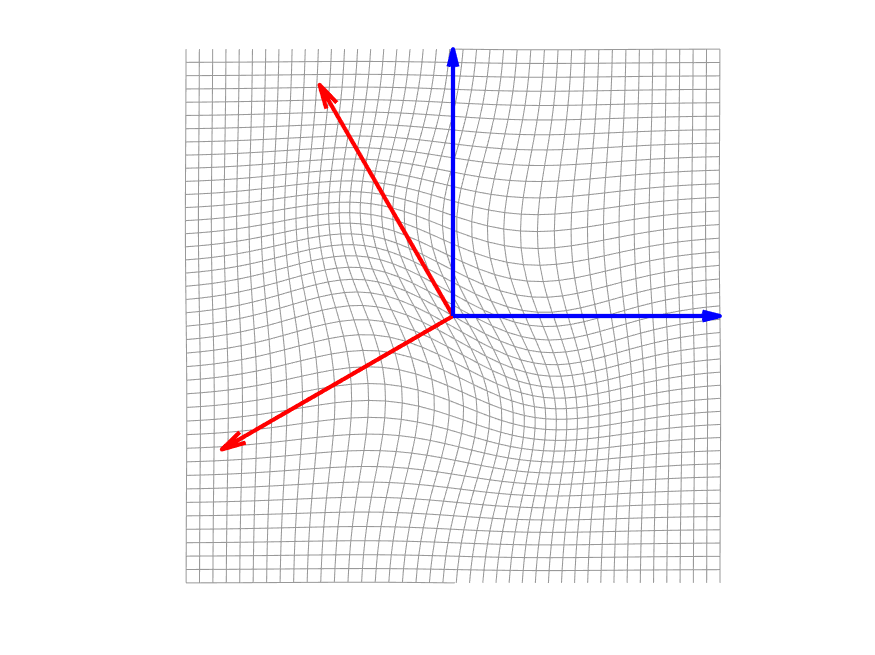} \\ 
    \rotatebox{90}{\phantom{aa} Unor. Gaussian}
    &\includegraphics[trim = 8mm 8mm 8mm 8mm ,clip,width=3cm]{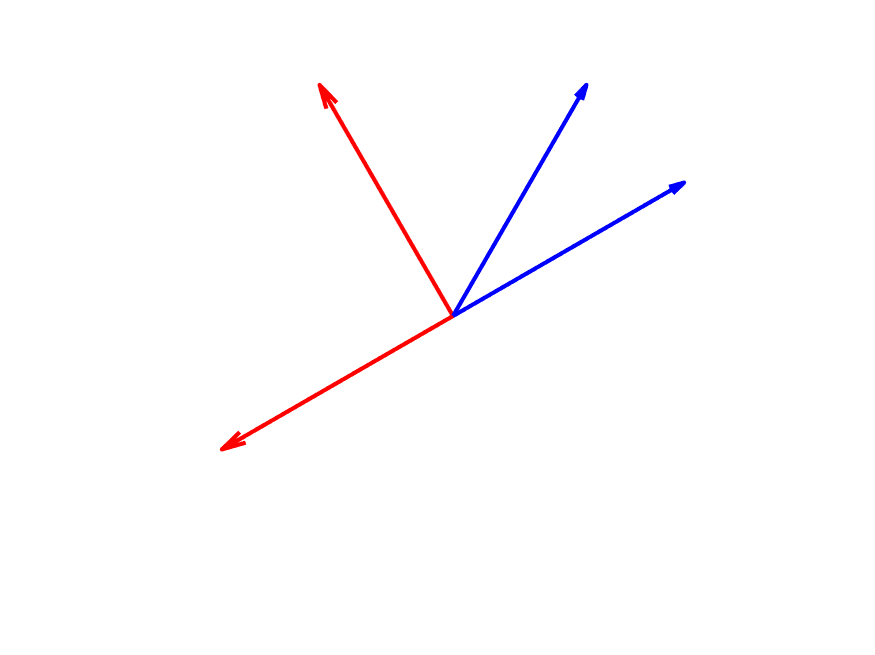} 
    &\includegraphics[trim = 8mm 8mm 8mm 8mm ,clip,width=3cm]{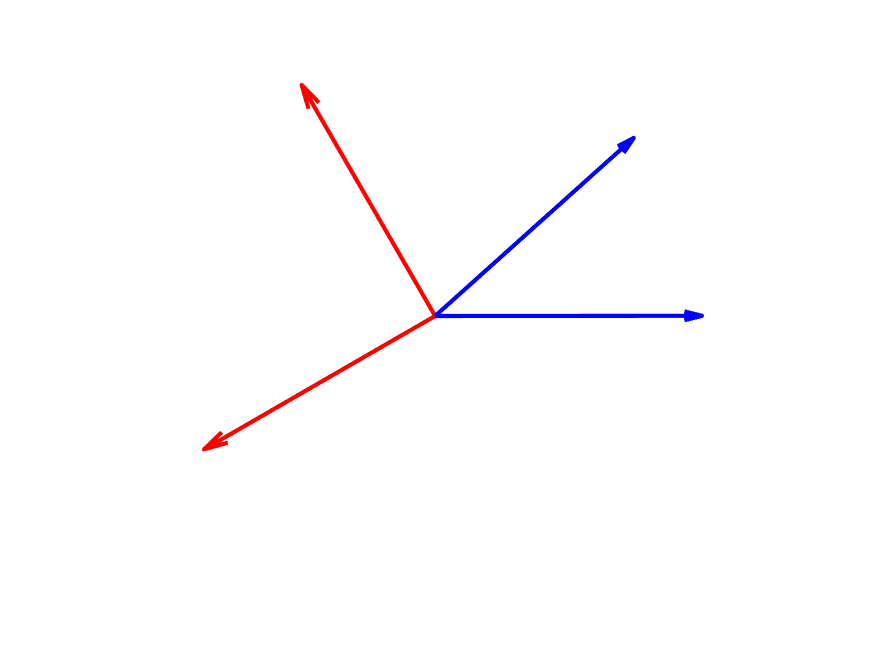} 
    &\includegraphics[trim = 8mm 8mm 8mm 8mm,clip,width=3cm]{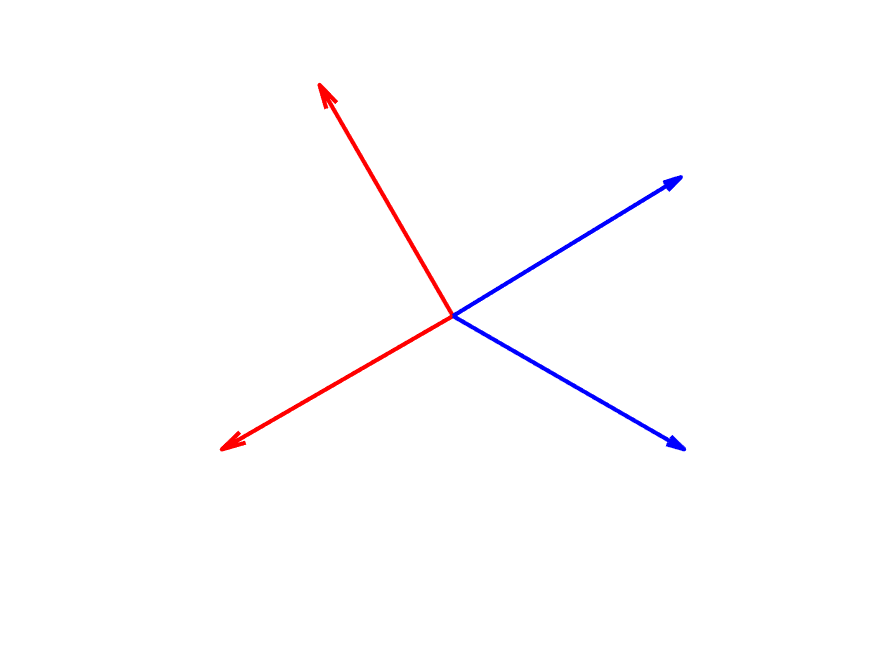}
    &\includegraphics[trim = 8mm 8mm 8mm 8mm ,clip,width=3cm]{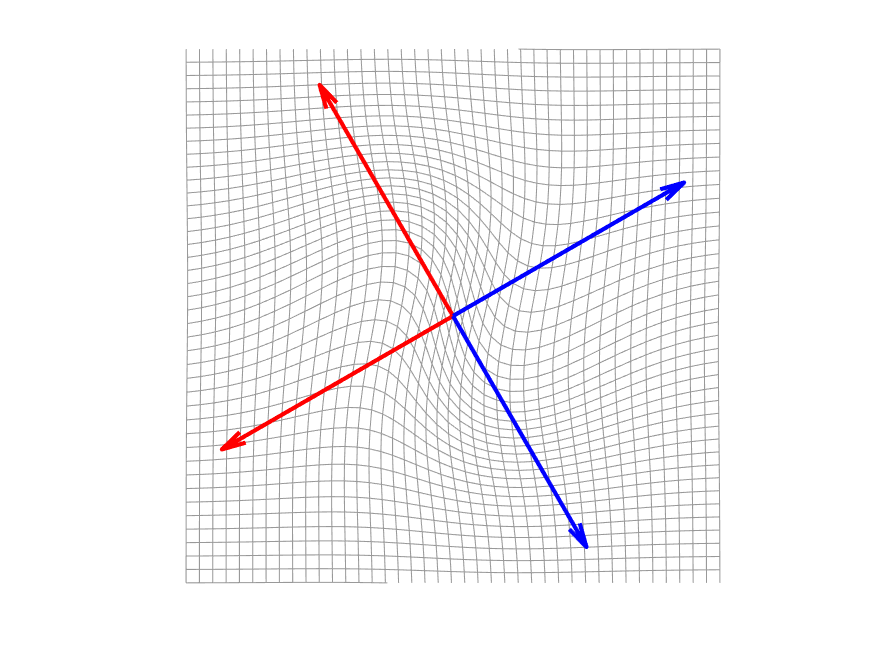} \\
    \rotatebox{90}{Or. Gaussian/Linear}
    &\includegraphics[trim = 8mm 8mm 8mm 8mm ,clip,width=3cm]{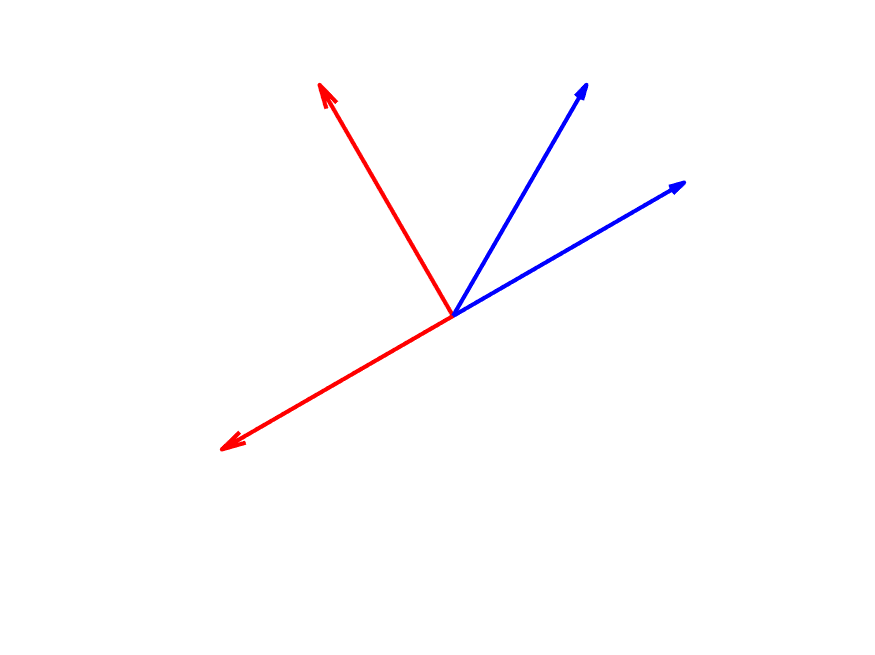} 
    &\includegraphics[trim = 8mm 8mm 8mm 8mm ,clip,width=3cm]{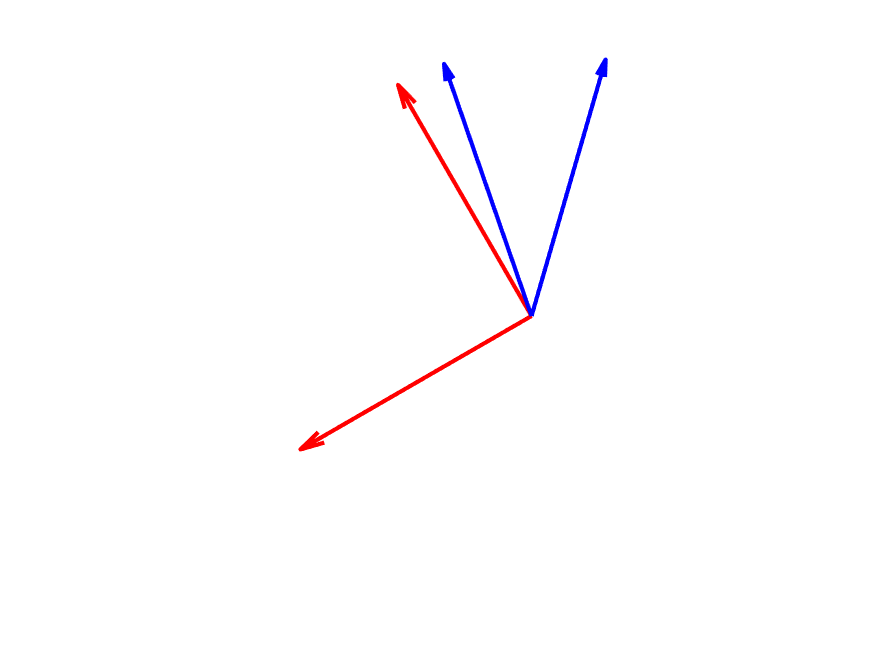} 
    &\includegraphics[trim = 8mm 8mm 8mm 8mm,clip,width=3cm]{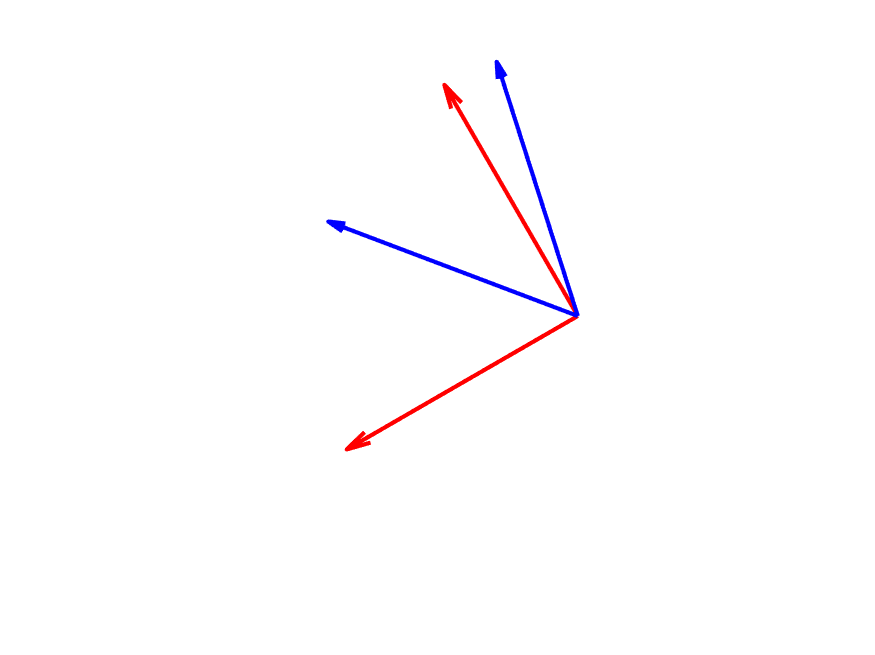}
    &\includegraphics[trim = 8mm 8mm 8mm 8mm ,clip,width=3cm]{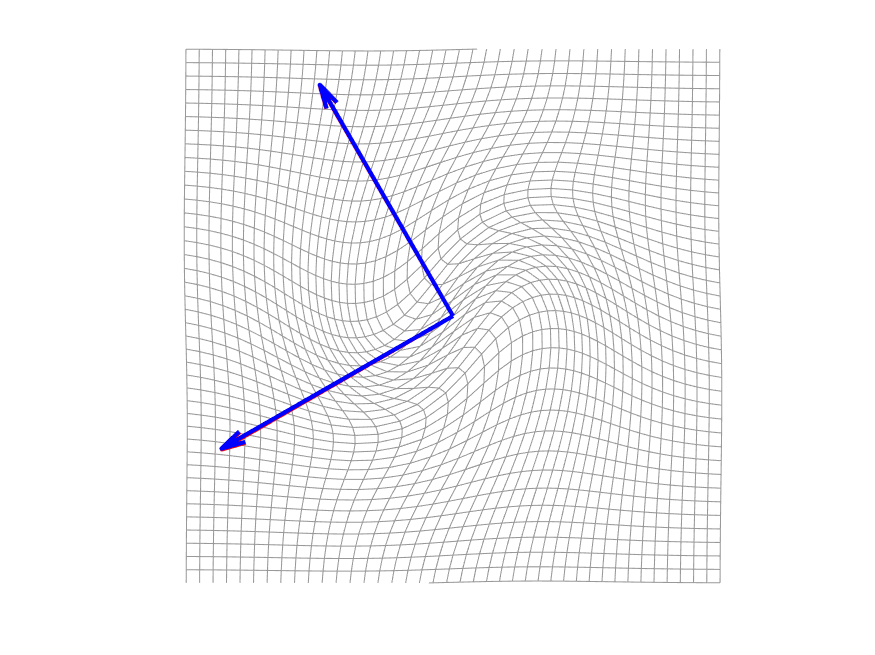} \\
     &$t=0$ & $t=1/3$ & $t=2/3$  & $t=1$
    \end{tabular}
    \caption{Matching of pairs of Dirac varifolds (template is in blue and target in red) under the normalized action with different choices of kernels: Binet on the first row, unoriented Gaussian ($\sigma_s =1$) on the second and oriented Gaussian ($\sigma_s =2$) on the last one. The linear kernel leads to the same result as the former in that particular case.}
    \label{fig:2Diracs}
\end{figure}

The results of Propositions \ref{prop:var_metric2} and \ref{prop:var_metric3} hold under the assumption that the kernel defined by $\rho$ is a $C_0$-universal kernel on $\R^n$, which restricts the possible choices to a few known classes (cf \cite{Carmeli2010} for a thorough analysis). Here, we will focus on the class of Gaussian kernels given by $\rho(|x-x'|^2) = e^{-\frac{|x-x'|^2}{\sigma^2}}$ with a width parameter $\sigma>0$ that essentially provides a notion of spatial scale sensitivity to the metric, and which must be adapted to the intrinsic sizes of shapes in each example.  

In combination with $\rho$, as in \cite{Charon2017}, we introduce the following four kernels on $\S^{n-1}$:
\begin{itemize}
\item[$\bullet$] $\gamma(\langle d,d'\rangle) = \langle d,d'\rangle$ (linear kernel): this choice is related to the particular subclass of currents \cite{Glaunes2008}. In that case, the resulting $\|\cdot\|_{W^*}$ is clearly only a pseudo-metric on $\mathcal{D}$ since the linearity implies that in $W^*$: $\delta_{(x,-d)}=-\delta_{(x,d)}$ and for any $d_1\neq -d_2$, $\delta_{(x,d_1)} + \delta_{(x,d_2)} = |d_1 + d_2| \delta_{\left(x,\frac{d_1+d_2}{|d_1+d_2|}\right)}$. However, we still obtain a metric on the subspace $\mathring{\mathcal{D}}$ thanks to Proposition \ref{prop:var_metric2}.  

\item[$\bullet$] $\gamma(\langle d,d'\rangle) = \langle d,d'\rangle^2$ (Binet kernel): $\gamma$ being an even function, as discussed in Section \ref{sec:discrete_varifolds}, the resulting metric on $W^*$ is invariant to the orientation of direction vectors. According to Proposition \ref{prop:var_metric3}, we then have a distance on $\mathring{\mathcal{D}}$ modulo the orientation. Note however that with this particular choice, one does not obtain a metric (but only a pseudo-metric) on $\mathcal{D}$ modulo the orientation, as we will illustrate in the examples below.        

\item[$\bullet$] $\gamma(\langle d,d'\rangle) = e^{-\frac{2}{\sigma_s^2}(1-\langle d,d'\rangle^2)}$ (unoriented Gaussian kernel): this is another example of orientation-invariant kernel considered in \cite{Charon2} corresponding to a particular construction of Gaussian kernels on the projective space. In contrast with Binet kernel, it does induce a metric on $\mathcal{D}$ modulo orientation.

\item[$\bullet$] $\gamma(\langle d,d'\rangle) = e^{-\frac{2}{\sigma_s^2}(1-\langle d,d'\rangle)}$ (oriented Gaussian kernel): this kernel is the restriction of the standard Gaussian kernel on $\R^n$ to the sphere $\S^{n-1}$. As such, it can be shown to be $C_0$-universal on $\S^{n-1}$ and thus, from Proposition \ref{prop:var_metric1}, lead to a metric on the entire space $\mathcal{D}$.
\end{itemize}

\begin{figure}
    \centering
    \begin{tabular}{ccccc}
    \rotatebox{90}{\phantom{aaaa} Linear}
    &\includegraphics[trim = 8mm 8mm 8mm 8mm ,clip,width=3cm]{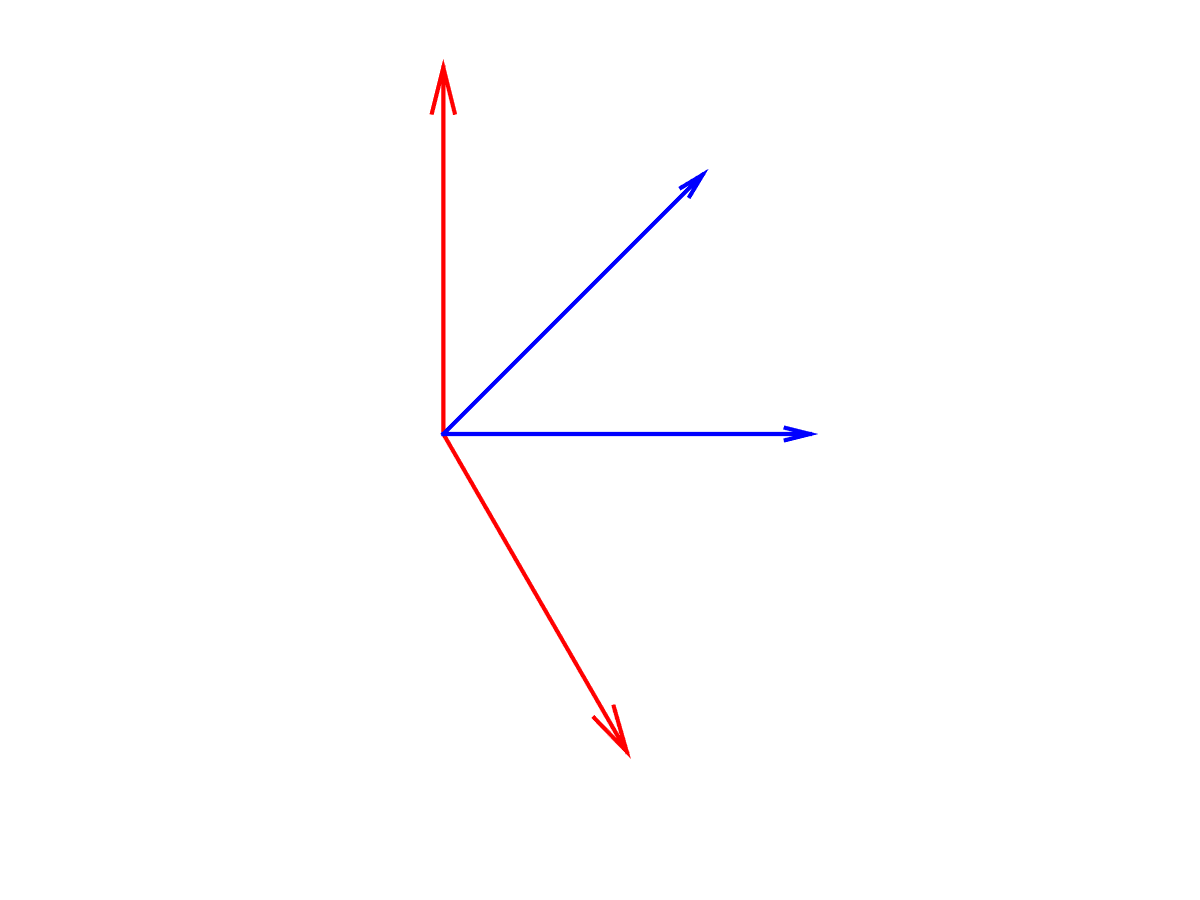} 
    &\includegraphics[trim = 8mm 8mm 8mm 8mm ,clip,width=3cm]{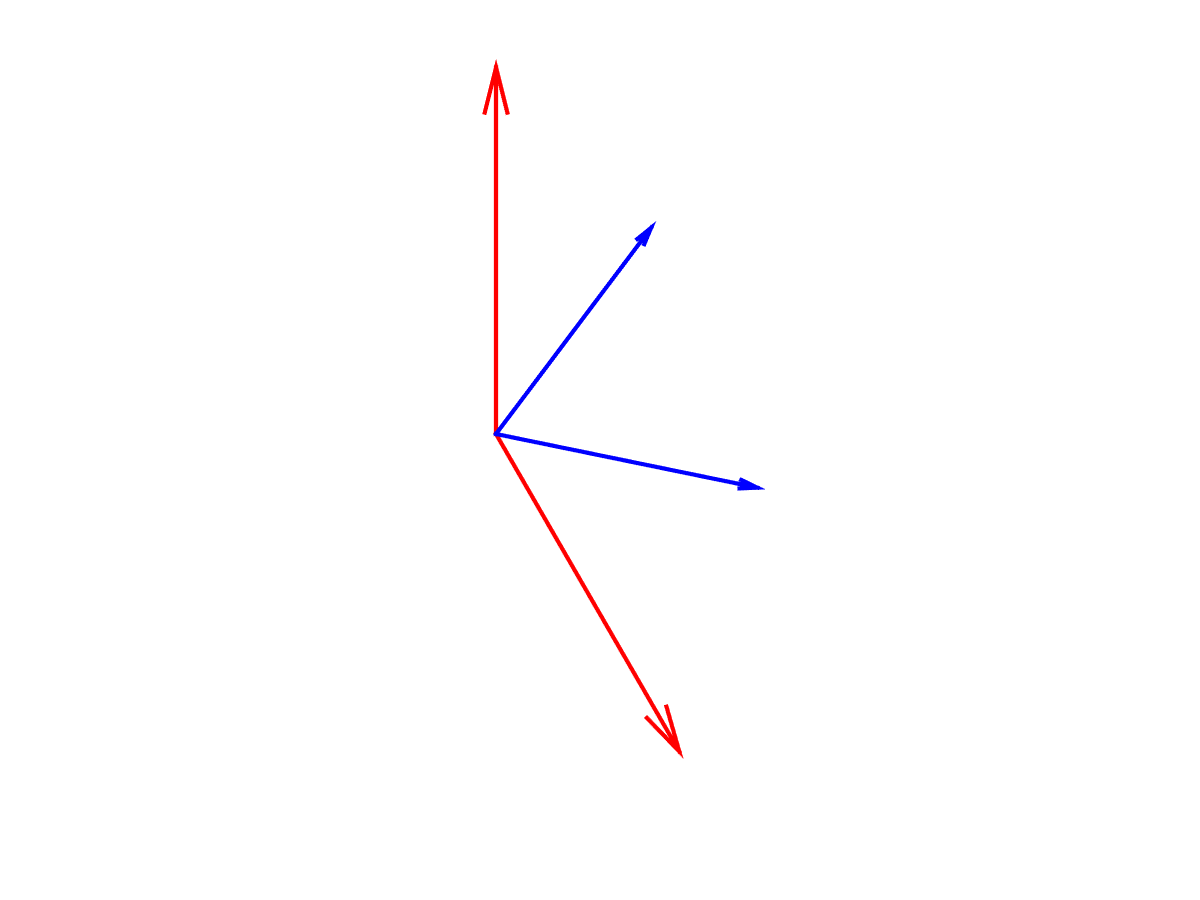} 
    &\includegraphics[trim = 8mm 8mm 8mm 8mm,clip,width=3cm]{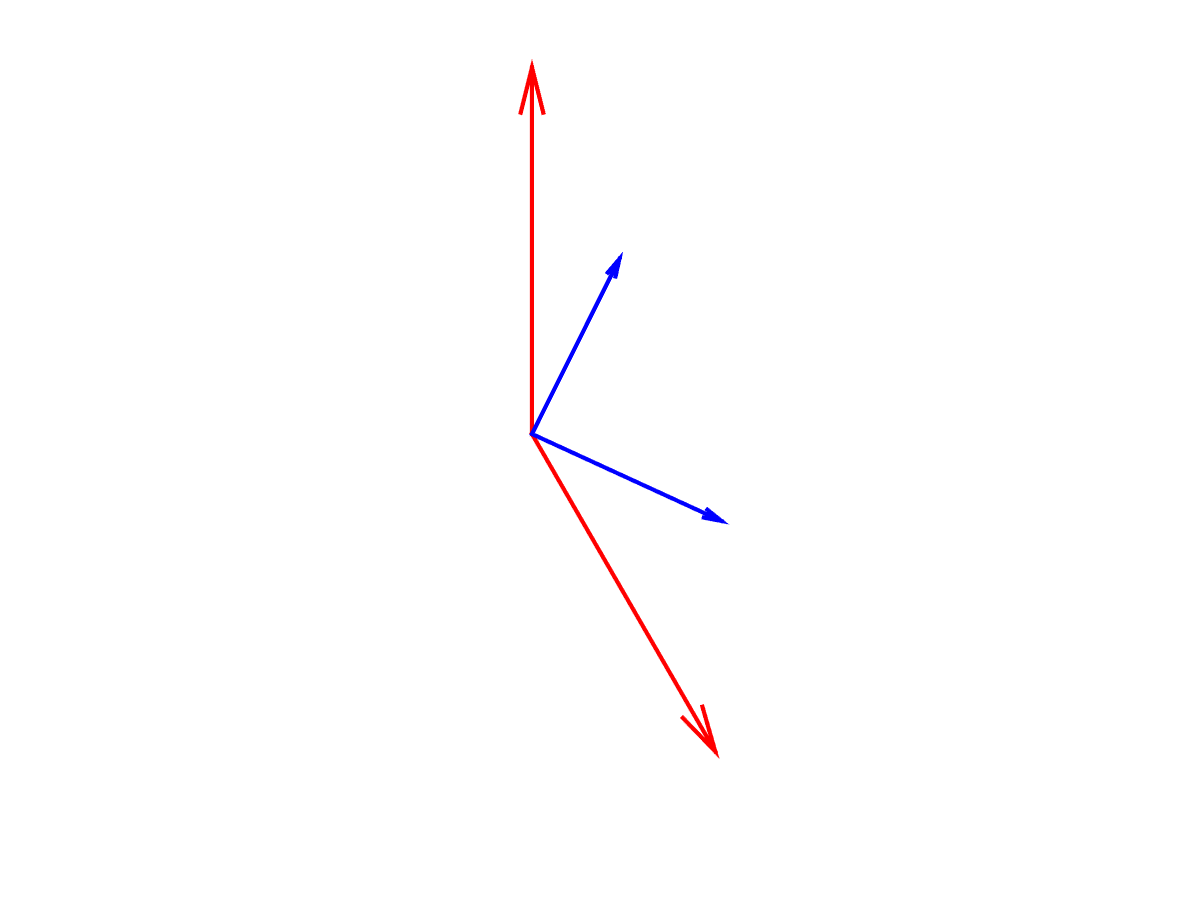}
    &\includegraphics[trim = 8mm 8mm 8mm 8mm ,clip,width=3cm]{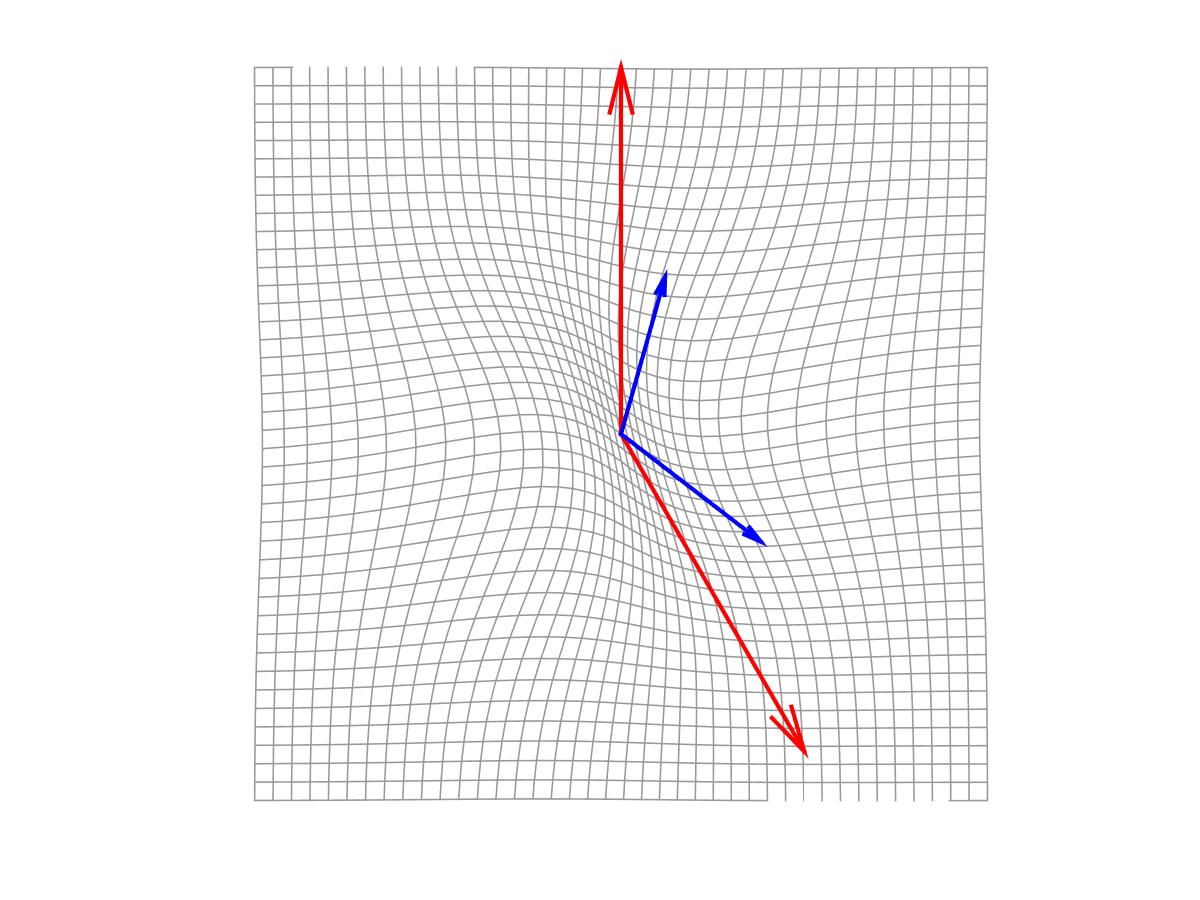} \\
    \rotatebox{90}{\phantom{aa} Or. Gaussian}
    &\includegraphics[trim = 8mm 8mm 8mm 8mm ,clip,width=3cm]{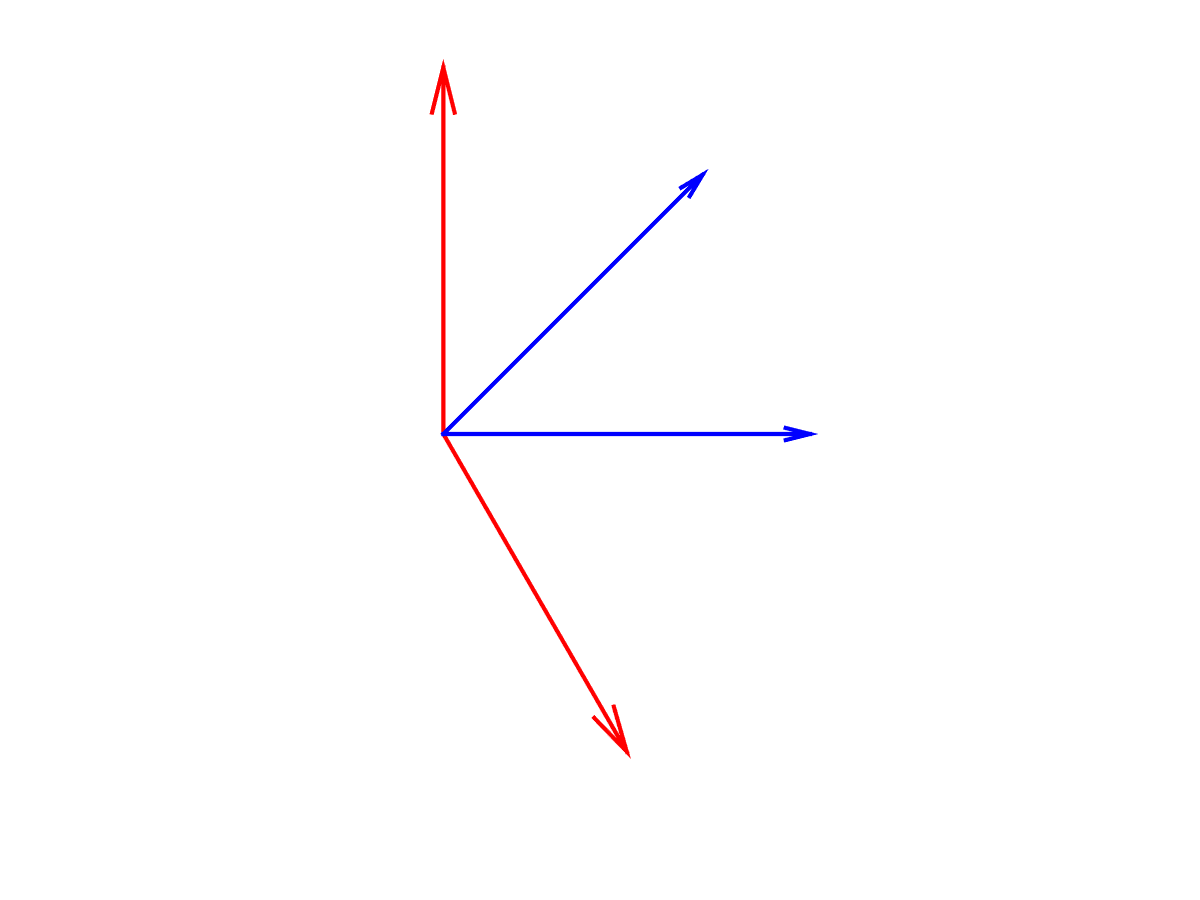} 
    &\includegraphics[trim = 8mm 8mm 8mm 8mm ,clip,width=3cm]{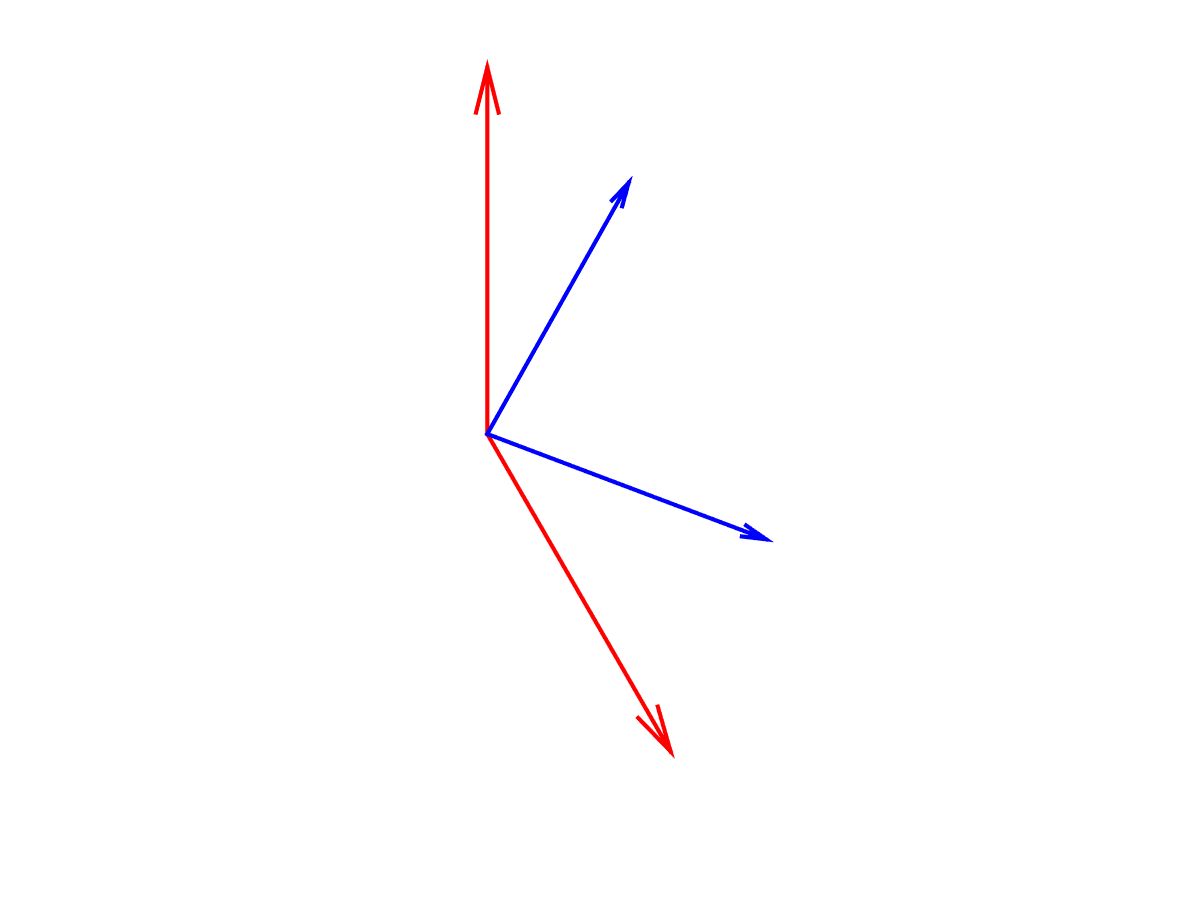} 
    &\includegraphics[trim = 8mm 8mm 8mm 8mm,clip,width=3cm]{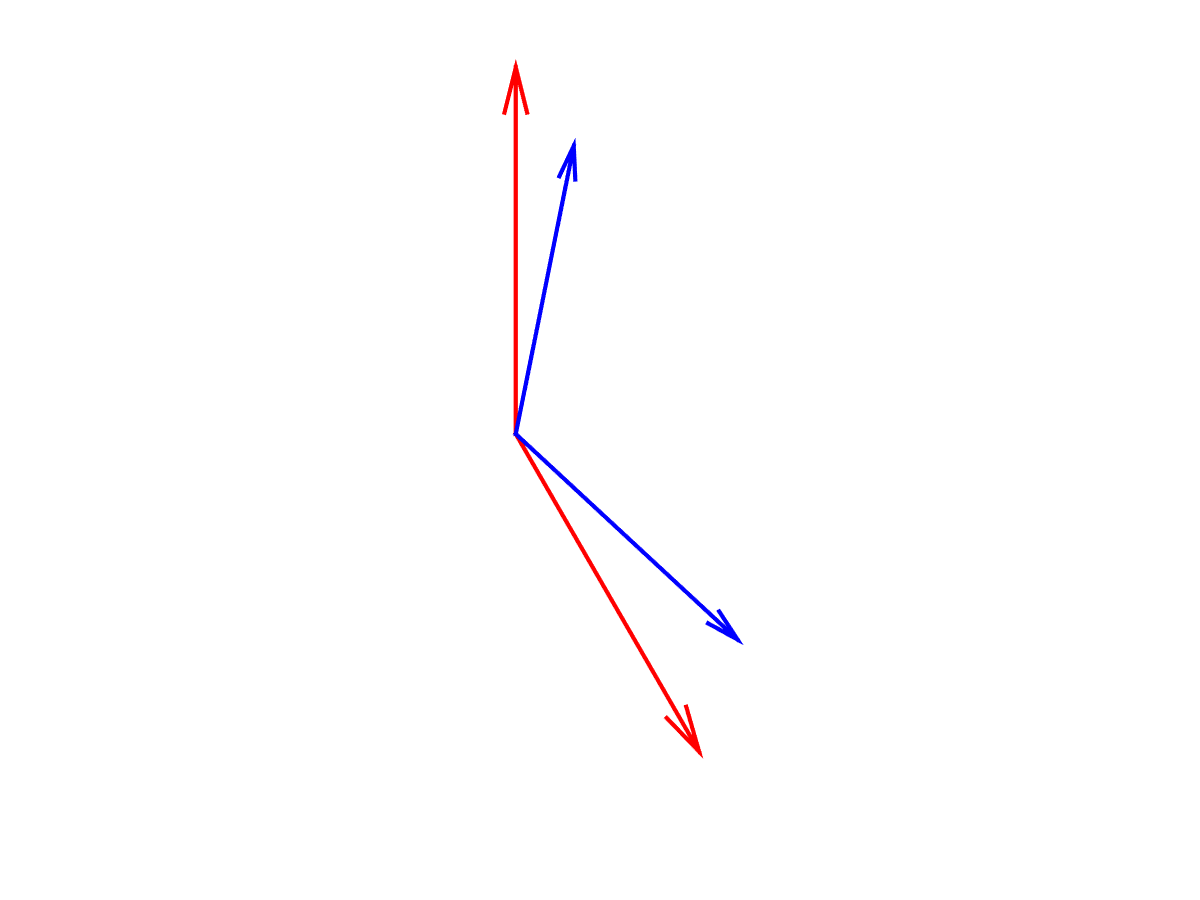}
    &\includegraphics[trim = 8mm 8mm 8mm 8mm ,clip,width=3cm]{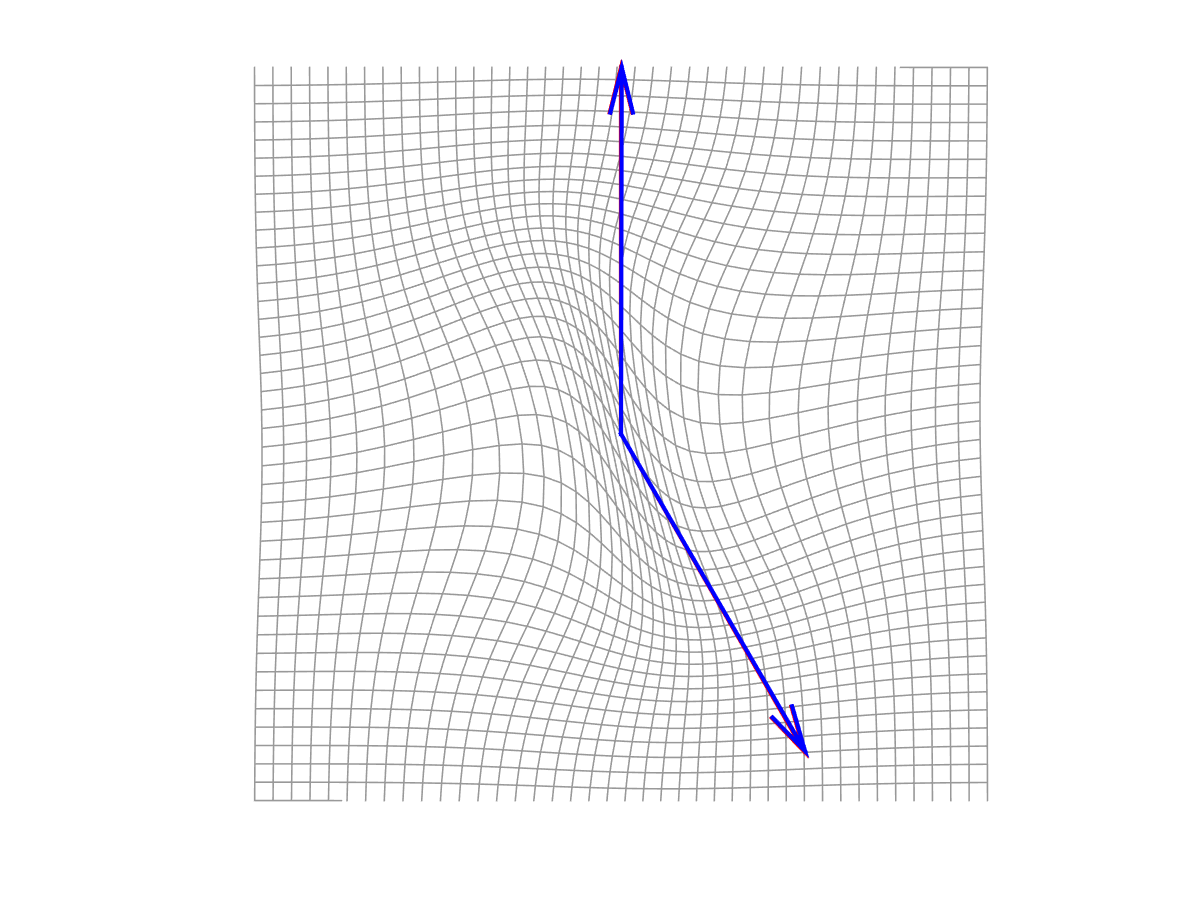} \\
     &$t=0$ & $t=1/3$ & $t=2/3$  & $t=1$
    \end{tabular}
    \caption{Registration of pairs of Dirac varifolds with the pushforward model for both the linear and oriented Gaussian kernel.}
    \label{fig:2Diracs_bis}
\end{figure}

We illustrate the aforementioned properties on a very simple registration example between pairs of Dirac varifolds located at the same position $x$ i.e $\delta_{(x,d_1)} + \delta_{(x,d_2)}$ and $\delta_{(x,d_1')} + \delta_{(x,d_2')}$. In Figure \ref{fig:2Diracs}, the template and target pairs of Diracs are matched based on the normalized action model. The estimated matching and deformations clearly differ with the choice of kernel but each of these result is in fact perfectly consistent with the different invariances of those kernels. Indeed the two Diracs are exactly matched to the target using the oriented Gaussian kernel since $\|\cdot\|_{W^*}$ is in that case a metric on the entire space $\mathcal{D}$. They are however matched to the opposite vectors with the unoriented Gaussian kernel which is indeed insensitive to orientation. In the case of Binet kernel, in addition to orientation-invariance, there exists other pairs of Diracs which are distinct in $\mathcal{D}$ but coincide in $W^*$. For example, it can be easily verified that all discrete varifolds of the form $\delta_{(x,d_1)} + \delta_{(x,d_2)}$ with orthogonal vectors $d_1$ and $d_2$ are equal in $W^*$, which is reflected by the result in Figure \ref{fig:2Diracs}. 

We emphasize the difference of behavior between linear and oriented Gaussian kernels with the example of Figure \ref{fig:2Diracs_bis} associated this time to the pushforward action model. The result shown in the first row is a consequence of the fact that fidelity terms derived from the linear kernel only constrains the sums $d_1 + d_2$ and $d_1' + d_2'$ to match.

\subsubsection*{Multi-directional varifold matching}
Finally, Figure \ref{fig:cat} shows an example of matching on more general discrete varifolds that involve varying number of directions at different spatial locations. This is computed with the normalized action using an oriented Gaussian kernel for the fidelity term. Although purely synthetic, it illustrates the potentialities of the proposed approach to register data with complex directional patterns.    

\begin{figure}
    \centering
    \begin{tabular}{cccc}
    \includegraphics[trim = 15mm 15mm 15mm 15mm ,clip,width=3.2cm]{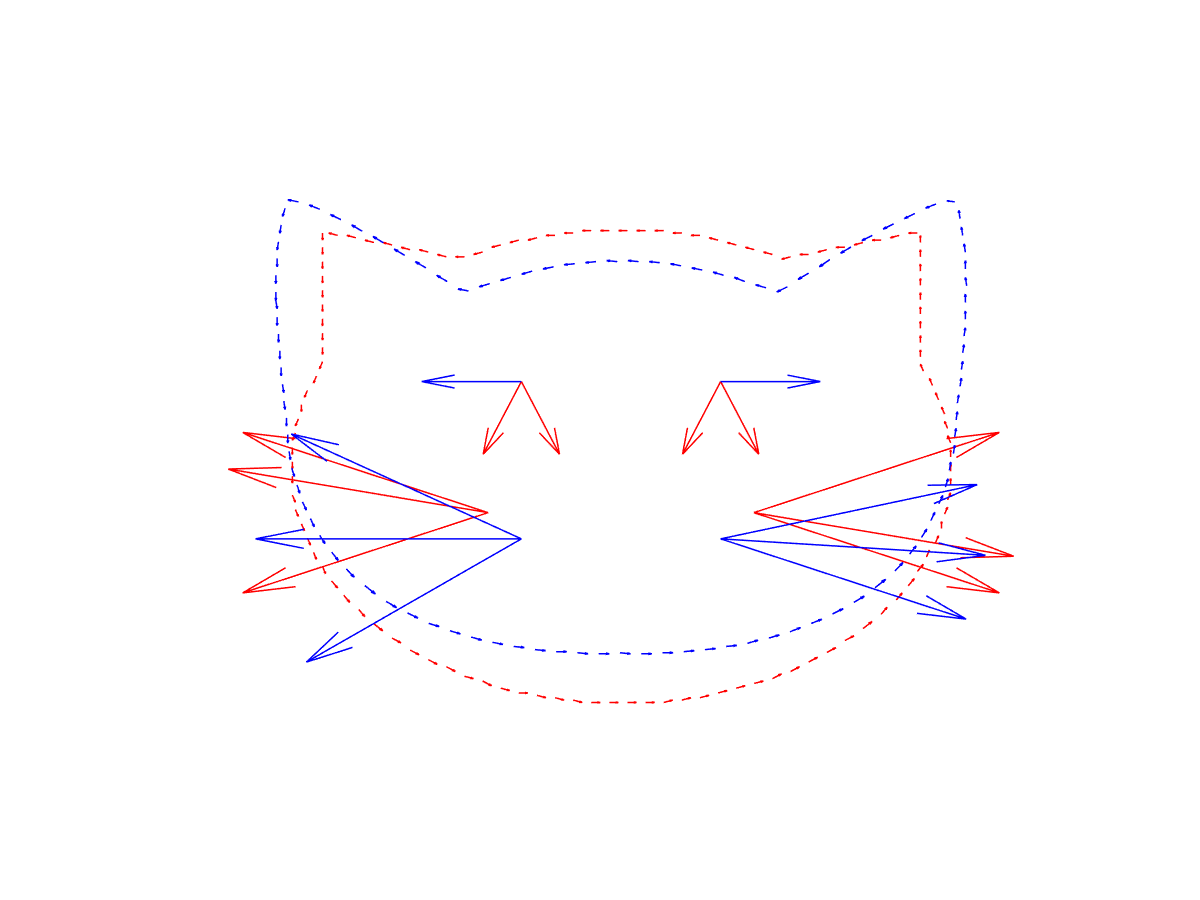} 
    &\includegraphics[trim = 15mm 15mm 15mm 15mm ,clip,width=3.2cm]{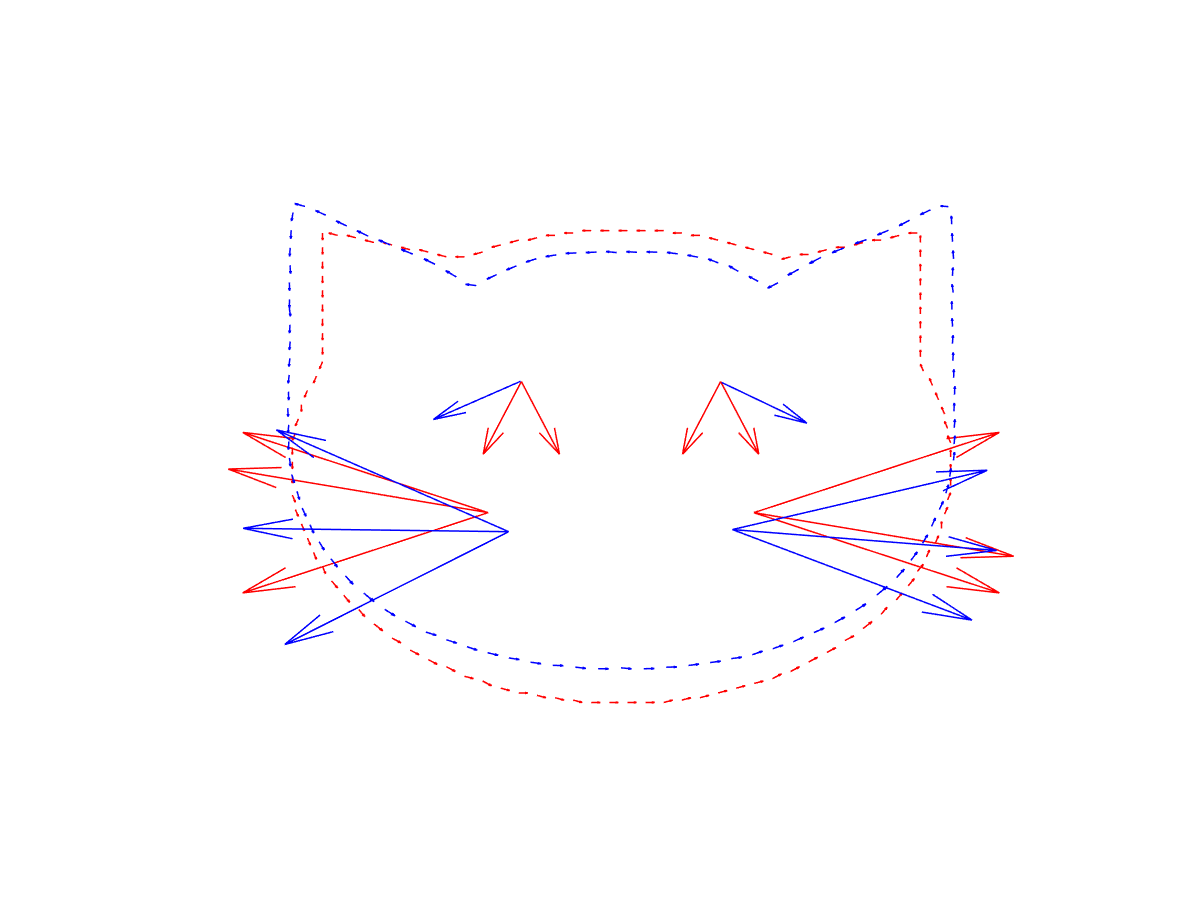} 
    &\includegraphics[trim = 15mm 15mm 15mm 15mm ,clip,width=3.2cm]{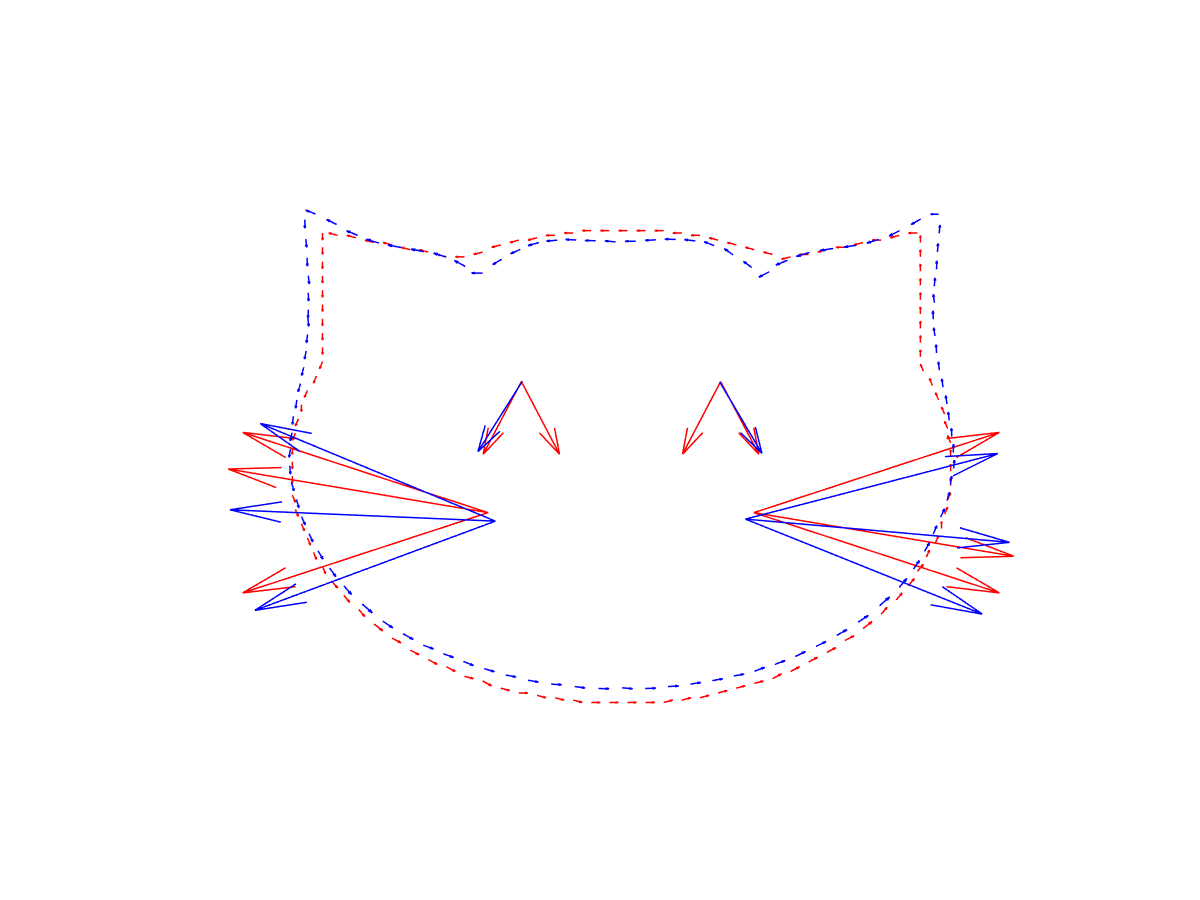}
    &\includegraphics[trim = 15mm 15mm 15mm 15mm ,clip,width=3.2cm]{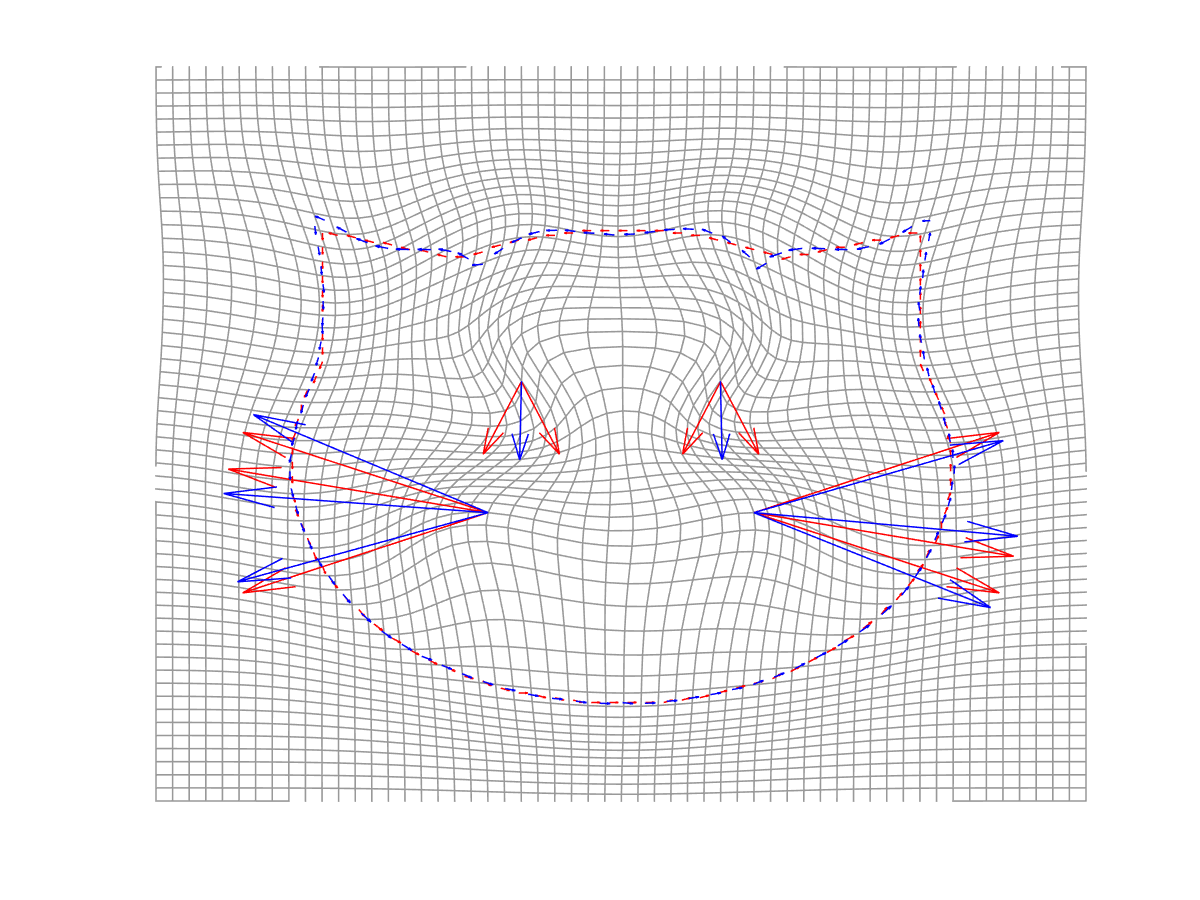} \\
    $t=0$ & $t=1/3$ & $t=2/3$  & $t=1$ 
    \end{tabular}
    \caption{Registration of multi-directional sets. The lengths of vectors correspond to the weights of the Dirac varifolds.}
    \label{fig:cat}
\end{figure}

\subsection{Contrast-invariant image registration}
A last possible application worth mentioning is the registration of images with varying contrast. Indeed, an image $I$ modulo all contrast changes is equivalently represented by its unit gradient vector field $\frac{\nabla I}{|\nabla I|}$. Note that this may in fact be only defined at isolated pixels in the image, specifically the ones where the gradient is non vanishing. Within the setting of this work, it is thus natural to associate to $I$ the discrete varifold 
\begin{equation*}
  \mu_{I} = \sum_{\nabla I(x_i) \neq 0} \delta_{\left(x_i,\frac{\nabla I}{|\nabla I|}(x_i)\right)}   \in \mathcal{D}
\end{equation*}
    

\begin{figure}
    \centering
    \begin{tabular}{cc}
    \includegraphics[trim = 15mm 15mm 15mm 10mm ,clip,width=4cm]{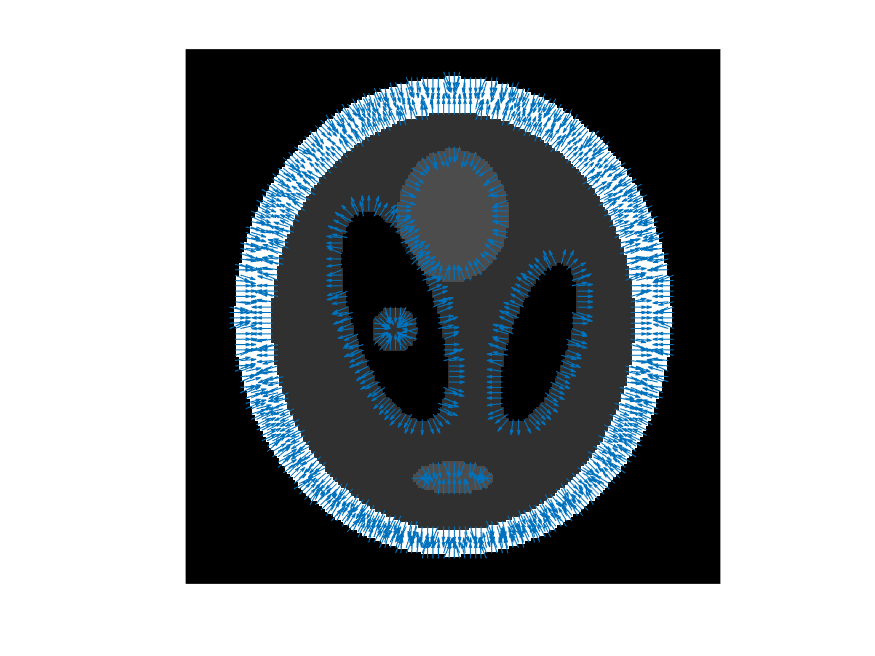} 
    &\includegraphics[trim = 15mm 15mm 15mm 10mm ,clip,width=4cm]{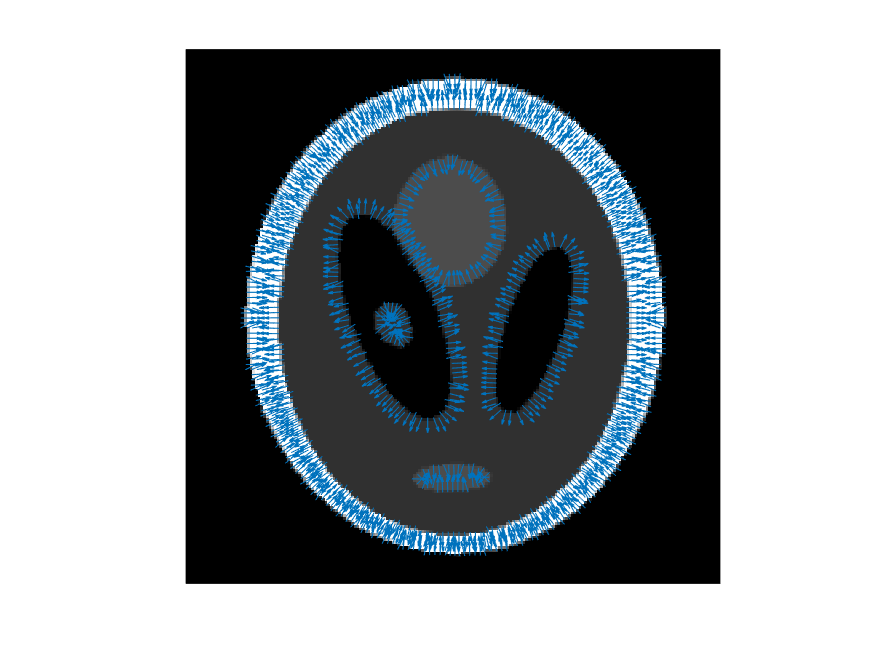} \\
    $t=0$ & $t=1/2$ \\
    \includegraphics[trim = 15mm 15mm 15mm 10mm ,clip,width=4cm]{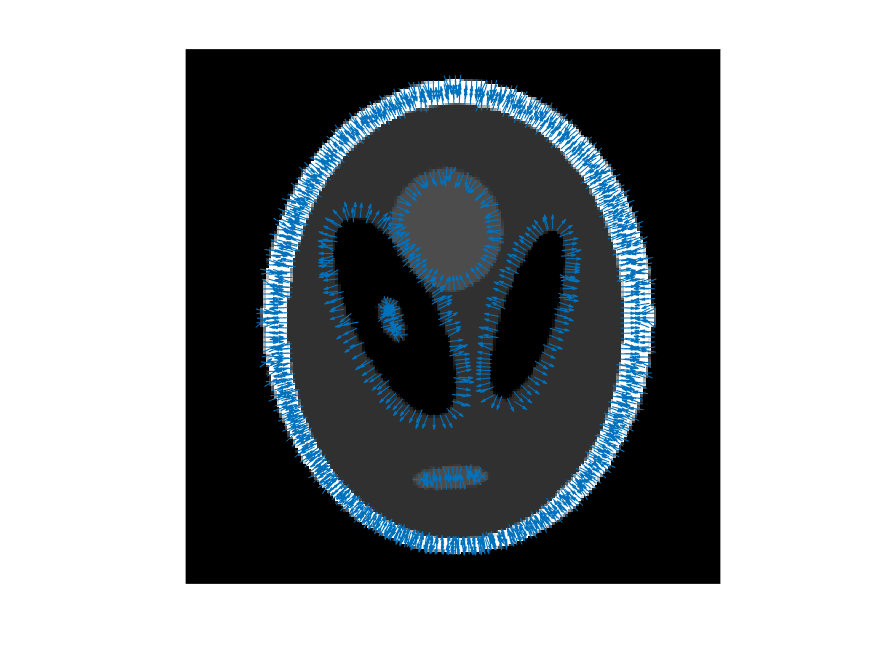}
    &\includegraphics[trim = 15mm 15mm 15mm 10mm ,clip,width=4cm]{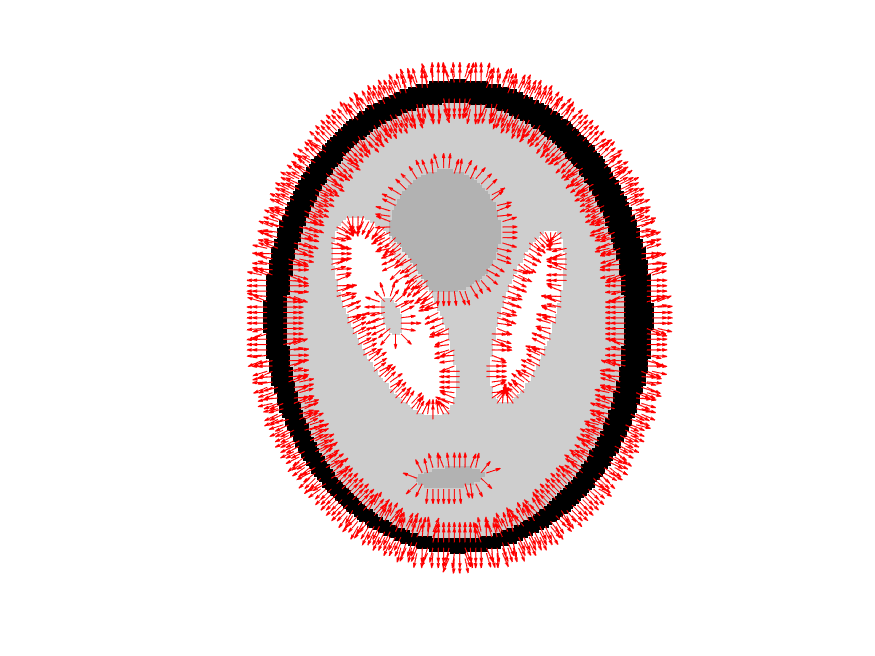} \\
     $t=1$  & $\textrm{target}$ 
    \end{tabular}
    
    \begin{tabular}{c}
    \includegraphics[trim = 15mm 15mm 15mm 10mm ,clip,width=6cm]{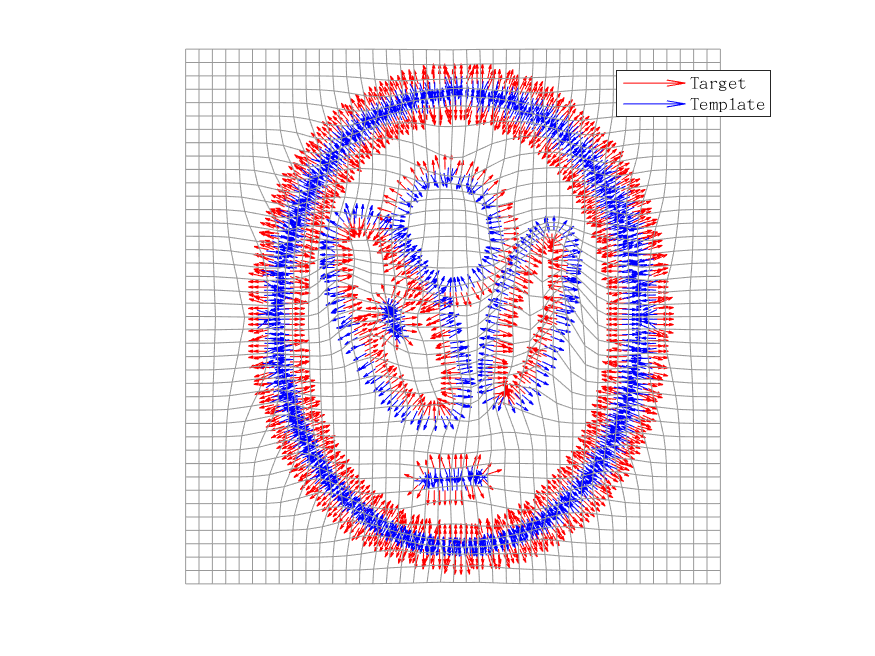} \\
    t=1
    \end{tabular}
    \caption{Registration of images modulo contrast changes. The matching is computed between the discrete varifolds associated to both images with the normalized action model and unoriented Gaussian fidelity term. The estimated deformation can be then applied to the template image.}
    \label{fig:phantom}
\end{figure}

It is straightforward that $\mu_{I}$ is invariant to increasing contrast changes. It also becomes invariant to decreasing ones by quotienting out the orientation of the unit gradient vectors, which in our framework is simply done by selecting an orientation-invariant kernel $\gamma(\langle d ,d' \rangle)$ to define $\|\cdot\|_{W^*}$. In Figure \ref{fig:phantom}, this approach is used to map two oppositely contrasted synthetic phantom brain images. We show both the alignment of the discrete varifolds as well as the full deformation applied to the image itself. Note that these images have no noise and a simple structure with relatively low number of non-vanishing gradients. There will be clearly the need for more validation to be done in the future in order to evaluate the practicality and robustness of this method for real multi-modal medical images.

\section{Conclusion and future work} 
We have proposed, in this paper, a framework for large deformation inexact registration between discrete varifolds. It relies on the LDDMM setting for diffeomorphisms and include different models of group action on the space of varifolds. In each case, we derived the corresponding optimal control problems and the associated geodesic equations in Hamiltonian form. By combining those with the use of kernel-based fidelity metrics on varifolds, we proposed a geodesic shooting algorithm to numerically tackle the optimization problems. We finally illustrated the versatility and properties of this approach through examples of various natures which go beyond the classical cases of curves or surfaces. 

Several improvements or extensions of this work could be considered for future work. From a theoretical standpoint, it would be for instance important to derive a more general 'continuous' varifold matching model i.e with more general distributions than Dirac sums. Besides, higher dimensional varifolds could be possibly introduced within our model, although this would involve dealing with direction elements in Grassmann manifolds as in \cite{Charon2} instead of the simpler $\S^{n-1}$. Lastly, future work will also include adapting the existing fast GPU implementations for LDDMM to the new dynamical systems appearing here, with the objective of making the whole approach more scalable to real data applications.        

\subsection*{Acknowledgements}
The authors would like to thank Prof. Sarang Joshi for many enriching discussions that initiated parts of this work.

\bibliographystyle{amsplain}
\bibliography{biblio}

\section*{\uppercase{Appendix}}
\subsection*{Proof of Proposition \ref{reduced_prop}}
Let $\left(X_i(t),u_i(t),P_i^{(1)}(t),P_i^{(2)}(t)\right)$ and $V_t(\cdot)$ satisfy equations \eqref{reduced_foward} and \eqref{reduced_vf}, then it's straightforward to verify that 
\begin{align*}
&\left( x_i(t),d_i(t),r_i(t),p_i^{(1)}(t),p_i^{(2)}(t),p_i^{(3)}(t) \right) \\
&:= \left( X_i(t),\overline{u_i}(t),|u_i(t)|,P_i^{(1)}(t),|u_i(t)|P_i^{(2)}(t),\left\langle P_i^{(2)}(t),\overline{u_i}(t)\right\rangle \right)
\end{align*}
is a solution of equation \eqref{fullforward} for $V$ with the initial conditions $$(X_i(0),\overline{u_i}(0),|u_i(0)|,P_i^{(1)}(0),|u_i(0)|P_i^{(2)}(0),\langle P_i^{(2)}(0),\overline{u_i}(0)\rangle).$$ Moreover, we see that
\begin{align}\label{eq:cond}
 \left\langle p_i^{(2)}(t), d_i(t) \right\rangle = \left\langle u_i(t),P_i^{(2)}(t) \right\rangle = r_i(t) p_i^{(3)}(t) , \ \forall t
\end{align}
which leads to $V_t$ being equal to the vector field $v_t$ defined in \eqref{fullvf} and therefore to a solution for the system \eqref{fullforward}.
 
Conversely, let $\left(x_i(t),d_i(t),r_i(t),p_i^{(1)}(t),p_i^{(2)}(t),p_i^{(3)}(t)\right))$ and $v_t(\cdot)$ satisfying \eqref{fullforward} and \eqref{fullvf} with initial conditions such that
  \begin{align} \label{eq:ini_cond}
  \left\langle p_i^{(2)}(0), d_i(0) \right\rangle = r_i(0) p_i^{(3)}(0). 
  \end{align}
Now let $\left(X_i(t),u_i(t),P_i^{(1)}(t),P_i^{(2)}(t)\right)$ be the solution of \eqref{reduced_foward} with the initial condition 
$$\left(x_i(0),r_i(0) d_i(0), p_i^{(1)}(0),p_i^{(2)}(0) \right)$$ 
and vector field $v_t(\cdot)$. We define $V_t(\cdot)$ as in \eqref{reduced_vf}, then as in previous discussion, we see that

\begin{align*}
\left( X_i(t),\overline{u_i}(t),|u_i(t)|,P_i^{(1)}(t),|u_i(t)|P_i^{(2)}(t),\left\langle P_i^{(2)}(t),\overline{u_i}(t) \right\rangle \right)
\end{align*}
is the solution for \eqref{fullforward} with initial value
\begin{align*}
&\left( X_i(0),\overline{u_i}(0),|u_i(0)|,P_i^{(1)}(0),|u_i(0)|P_i^{(2)}(0),\left\langle P_i^{(2)}(0),\overline{u_i}(0) \right\rangle \right) \\
 &= \left(x_i(0),d_i(0),r_i(0),p_i^{(1)}(0),p_i^{(2)}(0),p_i^{(3)}(0) \right).
\end{align*}
Since $\left(x_i(t),d_i(t),r_i(t),p_i^{(1)}(t),p_i^{(2)}(t),p_i^{(3)}(t)\right))$ is a solution for the same initial value problem, by uniqueness of ODE, we obtain

\begin{align*}
&\left( x_i(t),d_i(t),r_i(t),p_i^{(1)}(t),p_i^{(2)}(t),p_i^{(3)}(t) \right) \\
&= \left( X_i(t),\overline{u_i}(t),|u_i(t)|,P_i^{(1)}(t),|u_i(t)|P_i^{(2)}(t),\left\langle P_i^{(2)}(t),\overline{u_i}(t) \right\rangle \right), \ \forall t \in [0,1].
\end{align*}
Also, we have equation \eqref{eq:cond}, and from this equation we have
$$P_{d_k^{\perp}}(p_k^{(2)}) + p_k^{(3)} r_k d_k = p_k^{(2)} + \left(\left\langle p_k^{(2)}, d_k \right\rangle - p_k^{(3)} r_k \right) d_k  = p_k^{(2)} $$
and hence
\begin{align*}
v_t(\cdot) &= \sum_{k=1}^P K(x_k,\cdot)p_k^{(1)}+ D_1K(x_k,\cdot)(d_k,p_k^{(2)}) \\
 &= \sum_{k=1}^P K(X_k,\cdot)P_k^{(1)}+ D_1K(X_k,\cdot)(u_k,P_k^{(2)}) =V_t(\cdot).
\end{align*}

\subsection*{Reduced Hamiltonian equations}
For convenience, let us denote $f(|x_k-x_i|^2)$ by $f_{ki}$ for any function $f$. Then the reduced Hamiltonian equations for the normalized action can be shown to be
\begin{align*}
\left\{\begin{array}{ll}
 \dot{x}_i & = \sum_{k=1}^P h_{ki} p_k^{(1)} +2 \dot h_{ki}\langle x_k-x_i, d_k\rangle P_{d_k^{\perp}}(p_k^{(2)}) \\ 
 \dot{d}_i &= \sum_{k=1}^P -2 \dot{h}_{ki} \langle x_k-x_i,d_i\rangle P_{d_k^{\perp}}\left(p_k^{(1)}\right) \\
 &-\left[ 4 \ddot{h}_{ki} \langle x_k-x_i,d_k \rangle \langle x_k-x_i,d_i \rangle +2 \dot{h}_{ki} \langle d_k,d_i\rangle\right] P_{d_k^{\perp}}(p_k^{(2)}) \\
\dot{p}_i^{(1)} &= \sum_{k=1}^P \bigg\{ \left[ 2 \dot{h}_{ki} \langle p_k^{(1)}, p_i^{(1)} \rangle +4 \ddot{h}_{ki} \langle x_k-x_i, d_k \rangle \langle P_{d_k^{\perp}} (p_k^{(2)}), p_i^{(1)} \rangle \right] \\
&- \bigg[ 4 \ddot{h}_{ki} \langle x_k-x_i, d_i \rangle \langle p_k^{(1)}, P_{d_i^{\perp}} (p_i^{(2)}) \rangle \\
 &+ \bigg(8 h^{(3)}_{ki} \langle x_k-x_i, d_k \rangle \langle x_k-x_i, d_i \rangle+ 4 \ddot{h}_{ki} \langle d_k, d_i \rangle \bigg) \langle  P_{d_k^{\perp}}(p_k^{(2)}), P_{d_i^{\perp}}(p_i^{(2)})\rangle \bigg] \bigg\} (x_k-x_i) \\
&+ \left[ 2 \dot{h}_{ki} \langle P_{d_k^{\perp}}(p_k^{(2)}),p_i^{(1)} \rangle- 4 \ddot{h}_{ki} \langle x_k-x_i, d_i \rangle \langle  P_{d_k^{\perp}}(p_k^{(2)}), P_{d_i^{\perp}}(p_i^{(2)}) \rangle \right]  d_k \\
&- \left[ 2 \dot{h}_{ki} \langle p_k^{(1)}, P_{d_i^{\perp}}(p_i^{(2)}) \rangle + 4 \ddot{h}_{ki} \langle x_k-x_i, d_k \rangle \langle  P_{d_k^{\perp}}(p_k^{(2)}), P_{d_i^{\perp}}(p_i^{(2)}) \rangle \right] d_i \\
\dot{p}_i^{(2)} &= \sum_{i=1}^P \left[2 \dot{h}_{ki} \langle p_k^{(1)}, P_{d_i^{\perp}} (p_i^{(2)}) \rangle + 4 \ddot{h}_{ki} \langle x_k-x_i,d_k \rangle \left\langle P_{d_k^{\perp}}(p_k^{(2)}), P_{d_i^{\perp}} (p_i^{(2)})\right\rangle \right] (x_k-x_i) \\
&+ 2 \dot{h}_{ki} \left\langle P_{d_k^{\perp}}(p_k^{(2)}), P_{d_i^{\perp}} (p_i^{(2)})\right\rangle d_k  \\
&-\left\langle d_i,p_i^{(2)}\right\rangle \Big\{ 2 \dot{h}_{ki} \langle x_k-x_i,d_i \rangle p_k^{(1)} + \Big[4 \ddot{h}_{ki} \langle x_k-x_i, d_k\rangle \langle x_k-x_i,d_i \rangle \\
&+ 2 \dot{h}_{ki} \langle d_k,d_i\rangle  \Big] P_{d_k^{\perp}}(p_k^{(2)}) \Big\} \\
&-\Big\{ 2 \dot{h}_{ki} \langle x_k-x_i,d_i\rangle \langle p_k^{(1)},d_i \rangle \\
&+ \Big[4 \ddot{h}_{ki} \langle x_k-x_i,d_k \rangle \langle x_k-x_i,d_i\rangle + 2 \dot{h}_{ki} \langle d_k,d_i\rangle \Big] \langle P_{d_k^{\perp}}(p_k^{(2)}),d_i \rangle \Big\} p_i^{(2)}
\end{array}\right.
\end{align*}

In the pushforward action case, these equations are:

\begin{align*}
\left\{\begin{array}{ll}
\dot x_i &= \sum_{k=1}^P  h_{ki} p_k^{(1)} + 2 \dot h_{ki} \langle x_k-x_i, u_k\rangle p_k^{(2)} \\
\dot u_i &= \sum_{k=1}^P -2 \dot{h}_{ki} \langle x_k-x_i, u_i \rangle p_k^{(1)}- \left[4 \ddot{h}_{ki} \langle x_k-x_i, u_i \rangle \langle x_k-x_i,u_k \rangle + 2 \dot{h}_{ki} \langle u_i, u_k \rangle \right] p_k^{(2)}\\
\dot p_i^{(1)} &= \sum_{k=1}^P \Big\{\left[ 2 \dot{h}_{ki} \left\langle p_k^{(1)}, p_i^{(1)} \right\rangle + 4 \ddot{h}_{ki} \langle x_k-x_i, u_k \rangle \left\langle p_k^{(2)}, p_i^{(1)} \right\rangle \right] \\
&- \Big[4 \ddot{h}_{ki} \langle x_k-x_i, u_i\rangle \langle p_k^{(1)}, p_i^{(2)} \rangle \\
&+\left(8 h_{ki}^{(3)} \langle x_k-x_i,u_k \rangle \langle x_k-x_i, u_i \rangle +4 \ddot{h}_{ki} \langle d_k,d_i \rangle \right) \langle p_k^{(2)},p_i^{(2)}\rangle \Big] \Big\}(x_k-x_i) \\
&+\left[ 2 \dot{h}_{ki} \langle p_k^{(2)}, p_i^{(1)}\rangle - 4 \ddot{h}_{ki} \langle x_k-x_i, u_i \rangle \langle p_k^{(2)}, p_i^{(2)} \rangle \right]u_k\\
&-\Big[2 \dot{h}_{ki} \langle p_k^{(1)}, p_i^{(2)} \rangle +4 \ddot{h}_{ki} \langle x_k-x_i,u_k \rangle \langle  p_k^{(2)}, p_i^{(2)} \rangle \Big] u_i \\
\dot p_i^{(2)}  &=  \sum_{k=1}^P \left[ 2 \dot{h}_{ki} \left\langle p_k^{(1)}, p_i^{(2)} \right\rangle+ 4 \ddot{h}_{ki} \langle x_k-x_i,u_k\rangle \left\langle p_k^{(2)}, p_i^{(2)} \right\rangle \right](x_k-x_i) \\
&+ 2 \dot{h}_{ki} \left\langle p_k^{(2)} ,p_i^{(2)} \right\rangle u_k
\end{array}\right.
\end{align*}

\end{document}